\def\jobis#1{FF\fi
  \def\predicate{#1}%
  \edef\predicate{\expandafter\strip@prefix\meaning\predicate}%
  \edef\job{\jobname}%
  \ifx\job\predicate
}
\if\jobis{proposal}%
\DeclareMathOperator{\lcm}{lcm}
\DeclareMathOperator{\Supp}{Supp}
\DeclareMathOperator{\vol}{vol}
 \numberwithin{equation}{subsection}
 \numberwithin{footnote}{subsection}
 \newtheorem{lem}[subsection]{Lemma}
 \newtheorem{prop}[subsection]{Proposition}
 \newtheorem{thm}[subsection]{Theorem}
 \newtheorem{conj}[subsection]{Conjecture}
    \newtheoremstyle{upright}%
        {8pt plus2pt minus4pt}%
        {8pt plus2pt minus4pt}%
        {\upshape}%
        {}%
        {\bfseries\scshape}%
        {}%
        {1em}%
        {}%
\theoremstyle{upright}
 \newtheorem{defn}[subsection]{Definition}
 \newtheorem{rem}[subsection]{Remark}
 \newcommand{\N}{\mathbb N}
 \newcommand{\PP}{\mathbb P}
 \newcommand{\Q}{\mathbb Q}
 \newcommand{\R}{\mathbb R}
 \newcommand{\Z}{\mathbb Z}
 \newcommand{\bir}{\dashrightarrow}
 \newcommand{\rddown}[1]{\left\lfloor{#1}\right\rfloor} 
\title[Effectivity of Iitaka fibrations]{\large E\MakeLowercase{ffectivity of}
I\MakeLowercase{itaka fibrations and pluricanonical systems of polarized pairs}}
\thanks{2010 MSC:
14E30, 
14E05 
.}
\author{\large C\MakeLowercase{aucher} B\MakeLowercase{irkar and}
D\MakeLowercase{e}-Q\MakeLowercase{i} Z\MakeLowercase{hang}}
\date{\today}
\begin{document}
\maketitle

\begin{abstract}
For every smooth complex projective variety $W$ of dimension $d$ and nonnegative Kodaira dimension,
we show the existence of a universal constant $m$
depending only on $d$ and two natural invariants of the very general fibres of an Iitaka fibration of $W$
such that the pluricanonical system $|mK_W|$ defines an Iitaka fibration.
This is a consequence of a more general result on polarized adjoint divisors.
In order to prove these results we develop a generalized theory of pairs, singularities,
log canonical thresholds, adjunction, etc.
\end{abstract}

\tableofcontents


\section{Introduction}

We work over the complex number field $\mathbb{C}$. However, our results hold over any
algebraically closed field of characteristic zero. \\

{\textbf{\sffamily{Effectivity of Iitaka fibrations.}}}
Let $W$ be a smooth projective variety of Kodaira dimension $\kappa(W) \ge 0$.
Then by a well-known construction of Iitaka, there is a birational
morphism $V\to W$ from a smooth projective variety $V$, and a contraction $V\to X$ onto
a projective variety $X$ such that a (very) general fibre $F$ of $V\to X$
is smooth with Kodaira dimension zero, and $\dim X$ is equal to the Kodaira dimension
$\kappa(W)$. The map $W\bir X$ is referred to
as an \emph{Iitaka fibration} of $W$, which is unique up to birational equivalence.
For any sufficiently divisible natural number $m$, the pluricanonical
system $|mK_W|$ defines an Iitaka fibration.

When $\dim W = 2$, in 1970,
Iitaka [\ref{Ii}] proved that if $m$  is any natural number divisible by
$12$ and  $m\ge 86$, then $|mK_W|$ defines an Iitaka fibration (Fabrizio Catanese informed us that 
Iitaka proved this result for compact complex surfaces but the algebraic case goes back to Enriques.)
It has since been a question whether something similar holds in higher dimension. More precisely (cf. [\ref{HM}]):

\begin{conj}[Effective Iitaka fibration]\label{conj-eff-iitaka-fib}
Let $W$ be a smooth projective variety of dimension $d$ and Kodaira dimension $\kappa(W)\ge 0$.
Then there is a natural number $m_d$ depending only on $d$ such that the pluricanonical system
$|mK_W|$ defines an Iitaka fibration for any natural number $m$ divisible by $m_d$.
\end{conj}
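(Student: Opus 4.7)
The strategy is to reduce the conjecture, via the Iitaka fibration and the canonical bundle formula, to an effective birationality statement for a generalized polarized pair on the base $X$, whose dimension $\kappa(W)$ is strictly less than $d$ (the case of general type, $\kappa(W)=d$, is already covered by Hacon--McKernan and Takayama). After a birational modification, I may assume the Iitaka fibration is a morphism $f\colon W\to X$ with $X$ smooth and $\dim X=\kappa(W)$; let $F$ be a very general fibre, so $\kappa(F)=0$. Two numerical invariants of $F$ will govern the problem: the index $b$, i.e.\ the smallest positive integer with $bK_F\sim 0$; and the middle Betti number $c$ of the $\Z/b$-cyclic cover of $F$ associated with a trivializing section of $bK_F$, which controls the variation of Hodge structure underlying the moduli part of $f$.

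By the canonical bundle formula of Fujino--Mori (with the refinements of Koll\'ar), on a suitable birational model I can write
\[
b\,K_W\sim b\,f^{*}\bigl(K_X+B_X+M_X\bigr),
\]
where $B_X$ is the discriminant $\Q$-divisor and $M_X$ the moduli $\Q$-divisor; the coefficients of $B_X$ lie in a DCC set determined by $b$ and $d$, $M_X$ is nef on some birational modification, and a multiple $N M_X$ is Cartier with $N$ depending only on $b$ and $c$. This exhibits $(X,B_X+M_X)$ as a generalized polarized lc pair (in the sense to be developed in this paper), with $K_X+B_X+M_X$ big, since $\kappa(W)=\dim X$. I then invoke the paper's main theorem on polarized adjoint divisors to produce a natural number $m_0$, depending only on $\dim X\le d$, on the coefficient set of $B_X$, and on $N$, such that $|m_0(K_X+B_X+M_X)|$ defines a birational map. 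Pulling back via $f$ with $b\mid m_0$ shows that $|m_0K_W|$ contains $f^{*}|m_0(K_X+B_X+M_X)|$ and so already defines the Iitaka fibration of $W$.

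The main obstacle is twofold. First, the effective birationality statement for polarized pairs is itself the technical heart of the paper: one must extend the Hacon--McKernan--Xu argument from ordinary lc pairs $(X,B)$ to generalized pairs $(X,B+M)$ where $M$ is merely nef, and this forces one to build from scratch a robust theory of generalized singularities, adjunction, and ACC for generalized log canonical thresholds---precisely the machinery announced in the abstract. Second, to obtain a constant $m_d$ depending only on $d$, as the conjecture demands, one must bound $b$ and $c$ purely in terms of $\dim F=d-\kappa(W)$; this is essentially the boundedness of polarized Calabi--Yau varieties of that dimension, which is presently open in general. My proposal therefore yields cleanly the conditional result stated in the abstract, and the full conjecture modulo Calabi--Yau boundedness.
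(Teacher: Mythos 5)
The statement you were asked to prove is Conjecture \ref{conj-eff-iitaka-fib}, which the paper itself does not prove: the authors state explicitly that without bounding invariants of the very general Iitaka fibre the conjecture ``seems out of reach at the moment,'' being essentially tied to abundance (already for $\kappa(W)=0$ it amounts to effective nonvanishing of $h^0(W,m_dK_W)$). Your proposal correctly recognizes this. The reduction you describe --- make the Iitaka fibration a morphism, apply the Fujino--Mori canonical bundle formula to obtain a generalized polarized pair $(X,B_X+M_X)$ with $K_X+B_X+M_X$ big, the coefficients of $B_X$ in a DCC set determined by $b_F$, and $NM_X$ Cartier with $N$ determined by $b_F$ and the middle Betti number $\beta_{\widetilde F}$, then invoke effective birationality for polarized pairs and pull back --- is exactly the paper's proof of Theorem \ref{ThA} via Theorem \ref{t-bir-bnd-M}, given at the end of Section 8. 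The obstruction you identify, namely bounding $b_F$ and $\beta_{\widetilde F}$ purely in terms of $d-\kappa(W)$ (boundedness of the relevant Calabi--Yau fibres), is genuine and is precisely why the paper only establishes the conditional statement. So your argument reproduces the paper's route to Theorem \ref{ThA}, and the full Conjecture \ref{conj-eff-iitaka-fib} remains open for both you and the authors.

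One small imprecision: you define $b$ by $bK_F\sim 0$, whereas the paper's $b_F$ is the least $u$ with $|uK_F|\neq\emptyset$; since $F$ has Kodaira dimension zero the unique member of $|b_FK_F|$ need not be the zero divisor, and the $\Z/(b_F)$-cover is ramified over it. This does not affect the structure of the argument.
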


In this paper, we show a version of the conjecture as formulated in
[\ref{VZ}, Question 0.1] holds, that is, by assuming that some invariants of the very 
general fibres of the Iitaka fibration are bounded. Without these extra assumptions 
the above conjecture seems out of reach at the moment because most likely one needs the abundance 
conjecture to deal with the very general fibres. For example, when $\kappa(W)=0$, the conjecture 
is equivalent to the effective nonvanishing $h^0(W,m_dK_W)\neq 0$ which is obviously related to the 
abundance conjecture.
Note that there is also a log version of the conjecture for pairs:
see [\ref{HX}, Conjecture 1.2, Theorem 1.4] and the references therein,
where the authors confirmed this log version when the boundary divisor is big over the generic point of
the base of the log Iitaka fibration.

We recall some definitions before stating our result. Using the notation above, let 
$W$ be a smooth projective variety of Kodaira dimension $\kappa(W)\ge 0$ and $V\to X$ an Iitaka fibration 
from a resolution $V$ of $W$.  
 For a very general fibre $F$ of  $V\to X$, let
$$
b_F := \min\{u \in \N \, | \, |uK_F| \ne \emptyset\}.
$$
Let $\widetilde{F}$ be a smooth model of the $\Z/(b_F)$-cover of $F$ ramified over
the unique divisor in $|b_FK_F|$.
Then $\widetilde{F}$ still has Kodaira dimension zero,
but with $|K_{\widetilde F}| \ne \emptyset$.
Note that
$$
\dim \widetilde{F} = \dim F = \dim W - \dim X = \dim W - \kappa(W)
$$
and we denote this number by $d_F$.
We call the Betti number
$$
\beta_{\tilde F} := \dim H^{d_F}(\tilde F, \mathbb{C})
$$
the middle Betti number of $\widetilde{F}$.\\

\begin{thm}\label{ThA}
Let $W$ be a smooth projective variety of dimension $d$ and Kodaira dimension $\kappa(W)\ge 0$.
Then there is a natural number $m(d, b_F, \beta_{\widetilde{F}})$ depending only on $d$, $b_F$ and
$\beta_{\widetilde{F}}$ such that the pluricanonical system
$|mK_W|$ defines an Iitaka fibration whenever the natural number $m$ is divisible by $m(d, b_F, \beta_{\widetilde{F}})$.
\end{thm}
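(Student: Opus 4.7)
The plan is to reduce the statement to an effective result about polarized generalized pairs on the base of the Iitaka fibration, which is the general theorem alluded to in the abstract. First I would replace $W$ by the resolution $V$ and pass to an appropriate higher birational model $V' \to X'$ of the Iitaka fibration $V \to X$ so that $V' \to X'$ is a morphism of smooth projective varieties with connected fibres, and all the relevant divisors become simple normal crossings. Since $|mK_W|$ and $|mK_V|$ define the same rational map (up to birational equivalence of their images) for any $m$, it suffices to produce the analogous $m$ for $V'$.

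Next I would apply the canonical bundle formula (Fujino--Mori / Kawamata) to the Iitaka fibration $V' \to X'$. This gives a $\mathbb{Q}$-linear equivalence
\[
K_{V'} \sim_{\mathbb{Q}} f^*(K_{X'} + B_{X'} + M_{X'}),
\]
where $B_{X'}$ is the discriminant divisor and $M_{X'}$ is the moduli divisor, and $(X', B_{X'} + M_{X'})$ has the natural structure of a generalized pair whose nef part is represented by $M_{X'}$. The crucial point is that the invariants $b_F$ and $\beta_{\widetilde F}$ control both the denominators of $B_{X'}$ and the Cartier index of $M_{X'}$ on a sufficiently high model: the integer $b_F$ governs the semistable part via the cyclic cover $\widetilde F \to F$, and $\beta_{\widetilde F}$ bounds the monodromy of the variation of Hodge structures on the moduli part. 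Standard results of Kawamata, Fujino, and Viehweg--Zuo thus yield a fixed positive integer $N = N(d, b_F, \beta_{\widetilde F})$ such that $NM_{X'}$ is base-point free on a suitable model, and $N B_{X'}$ has integer coefficients, with all generalized singularities of $(X', B_{X'} + M_{X'})$ being sub-log-canonical.

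The resulting generalized pair $(X', B_{X'} + M_{X'})$ has $K_{X'} + B_{X'} + M_{X'}$ big, since its pullback is $K_{V'}$ whose Iitaka dimension equals $\dim X'$. At this point I would invoke the general effectivity theorem on polarized adjoint divisors developed in the paper, which applies precisely to big generalized lc pairs with bounded Cartier index on a polarization: there is an integer $m_0 = m_0(d, N, \ldots)$ such that $|m_0 (K_{X'} + B_{X'} + M_{X'})|$ defines a birational map. Pulling back via $f$ and combining with the canonical bundle formula above, a bounded multiple of $K_{V'}$ then defines the Iitaka fibration, which gives the required $m(d, b_F, \beta_{\widetilde F})$.

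The main obstacle, and the reason the bulk of the paper is needed, is the third step: establishing effective birationality for big generalized lc pairs under the bounded data coming from the canonical bundle formula. This requires the generalized theory of singularities, log canonical thresholds, and adjunction that the authors develop, because the moduli part is only nef (not semiample on $X'$ itself) and the discriminant part has controlled but not necessarily simple structure, so classical results on usual pairs do not directly apply. The matching between the Hodge-theoretic invariants $b_F, \beta_{\widetilde F}$ and the Cartier index of the nef part on the relevant birational model is the technical bridge one must construct carefully.
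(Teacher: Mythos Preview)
Your approach is essentially the same as the paper's: reduce to the base of the Iitaka fibration via the Fujino--Mori canonical bundle formula, use $b_F$ and $\beta_{\widetilde F}$ to bound the data of the resulting polarized pair $(X,B+M)$, and then invoke Theorem~\ref{t-bir-bnd-M}. Two small technical inaccuracies are worth flagging. First, what the canonical bundle formula gives is that $NbM$ is \emph{nef and Cartier} on a suitable model, not base-point free; b-semiampleness of the moduli part is an open conjecture in general, but nefness is all that Theorem~\ref{t-bir-bnd-M} requires. Second, the coefficients of $B$ do not in general satisfy $NB\in\Z$; rather they lie in the explicit DCC set $A(b,N)=\{(bNu-v)/(bNu)\mid u,v\in\N,\ v\le bN\}$, whose denominators are unbounded. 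This is exactly the situation Theorem~\ref{t-bir-bnd-M} is designed for, so your conclusion still goes through once you feed in the correct DCC set $\Lambda=A(b,N)$ and Cartier index $r=Nb$.
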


The theorem is an almost immediate consequence of \ref{t-bir-bnd-M} below. The proof is
given at the end of Section 8.
When $X$ is of general type, the numbers $b_F, \beta_{\widetilde{F}}$ do not play any role so
$m(d, b_F, \beta_{\widetilde{F}})$ depends only on $d$.

Here is a brief history of
partial cases of Theorem \ref{ThA}:

$\bullet$ when $\dim W=2$ [\ref{Ii}],

$\bullet$ when $\kappa(W) = 1$ [\ref{FM}],

$\bullet$  when $W$ is of general type [\ref{HM}][\ref{Ta}] (see also [\ref{Tsuji}]),

$\bullet$   when $\kappa(W) = 2$ [\ref{VZ}] (see also [\ref{TX}]),

$\bullet$ when $\dim W=3$ [\ref{Ka86}][\ref{FM}][\ref{VZ}][\ref{HM}][\ref{Ta}] (see also [\ref{CC}]),

$\bullet$  when $X$ is non-uniruled, $V\to X$ has maximal variation and
its general fibres have good minimal models [\ref{Pa}](see also [\ref{Cerbo}]),

$\bullet$   when $V\to X$ has zero variation and
its general fibres  have good minimal models [\ref{Jiang}].

Note that the above references show that Conjecture \ref{conj-eff-iitaka-fib} 
holds when $\dim W\le 3$. 

\par \vskip 1pc

{\textbf{\sffamily{Effective birationality for polarized pairs of general type.}}}
Let $W$ be a smooth projective variety of nonegative Kodaira dimension. After replacing
$W$ birationally we can assume the Iitaka fibration $W\to X$ is a morphism.
Applying the canonical bundle formula of [\ref{FM}] (which is based on [\ref{Ka98}]), 
perhaps after replacing $W$ and $X$ birationally,
there is a $\Q$-boundary $B$ and a nef $\Q$-divisor $M$ on $X$ such that
for any natural number $m$ divisible by $b_F$ we have a natural isomorphism between
$H^0(W,mK_W)$ and $H^0(X,{m(K_X+B+M)})$.  In particular, if $|{m(K_X+B+M)}|$
defines a birational map, then $|mK_W|$ defines an Iitaka fibration.
Moreover, the coefficients of $B$ belong to a DCC set and
the Cartier index of $M$ is bounded in terms of $b_F$ and $\beta_{\widetilde{F}}$.
Therefore we can derive Theorem \ref{ThA} from the next result.

\begin{thm}\label{t-bir-bnd-M}
Let $\Lambda$ be a DCC set of nonnegative real numbers,
and $d,r$ natural numbers. Then there is a natural number $m(\Lambda, d,r)$
depending only on $\Lambda, d,r$ such that if:

\begin{itemize}
\item[(i)]
$(X,B)$ is a projective lc pair of dimension $d$,
\item[(ii)]
the coefficients of $B$ are in $\Lambda$,
\item[(iii)]
$rM$ is a nef Cartier divisor, and
\item[(iv)]
$K_X+B+M$ is big,
\end{itemize}
then the linear system $|{m(K_X+B+M)}|$ defines a birational map if $m\in \N$ is divisible by $m(\Lambda, d,r)$.
\end{thm}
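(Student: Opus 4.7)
The plan is to prove Theorem \ref{t-bir-bnd-M} by induction on $d$, following the Hacon--McKernan--Xu style effective birationality strategy adapted to \emph{generalized polarized pairs} of the form $(X,B+M)$. The first task is to extend enough of the basic machinery of singularities---generalized klt/lc, non-klt centers, adjunction (subadjunction to centers), tie-breaking, and Kawamata--Viehweg vanishing---to this enlarged category, so that the standard constructions with non-klt centers can be carried out. The condition that $rM$ is nef Cartier, together with the DCC assumption on the coefficients of $B$, is what plays the role of ``DCC coefficients'' for the generalized pair.

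With the framework in place, the goal reduces to showing that for $m$ divisible by some $m(\Lambda,d,r)$ the system $|m(K_X+B+M)|$ separates two very general points $x_1,x_2\in X$. Fixing such points, I would aim to construct a $\Q$-divisor $D\sim_\Q t(K_X+B+M)$ with $0<t<1$, $t$ bounded away from $1$ in terms of $(\Lambda,d,r)$, such that the generalized pair $(X,B+D+M)$ is non-klt at $x_1$ with a single isolated non-klt center there, and is klt in a neighborhood of $x_2$. Generalized Kawamata--Viehweg vanishing then lifts any section on the non-klt center to $X$, and using the induction hypothesis on the non-klt center (after generalized adjunction produces a DCC boundary and a nef part with bounded Cartier index on it) yields a section separating $x_1$ from $x_2$.

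The construction of such a $D$ requires a lower bound on $\vol(K_X+B+M)$ at a very general point, which splits the argument into two regimes. In the \emph{large volume} regime the usual tie-breaking and cutting argument works directly: the volume is large enough to produce a non-klt center by standard estimates. In the \emph{small volume} regime the issue is that tie-breaking alone gives no control, and one must instead show that $(X,B+M)$ belongs to a bounded family of generalized polarized pairs; once bounded, effective birationality follows from boundedness of anticanonical volumes together with a uniform global generation statement on the family.

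The main obstacle, and what occupies the bulk of the work, is the small-volume case: proving a BAB-type boundedness result for generalized polarized pairs of general type with DCC coefficients and bounded Cartier index on $M$. This in turn should rest on a generalized version of the global ACC for log canonical thresholds and on ACC for generalized lc thresholds, both of which need to be developed within the generalized framework (as advertised in the abstract). Inductive MMP-style arguments, adjunction to generalized non-klt centers, and a compactness/accumulation argument on the set of volumes are the expected technical ingredients. Once boundedness is established, the two regimes can be combined uniformly and the required $m(\Lambda,d,r)$ extracted, completing the induction.
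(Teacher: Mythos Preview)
Your high-level plan---extend the HMX machinery to generalized polarized pairs and run the non-klt-center cutting argument by induction on $d$---is in the right spirit, but it diverges from the paper's actual route, and your small-volume regime contains a genuine gap.

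The paper does \emph{not} prove Theorem~\ref{t-bir-bnd-M} by a direct HMX-style induction on $d$. Instead it proceeds in two stages. First, Proposition~\ref{p-bir-nM-n-large} establishes the special case of $K_X+B+nM$ with $n>2rd$ and $r\mid n$: in this regime every step of an LMMP on $K_X+B+nM$ is $M$-trivial by boundedness of extremal ray length, so $rM$ stays nef and Cartier and the HMX argument (cutting, adjunction to centers, induction on dimension, volume bounds) goes through essentially verbatim. Second, the Global ACC (Theorem~\ref{t-global-acc}) is proved separately---with Proposition~\ref{p-bir-nM-n-large} as a key input---and from it one extracts a uniform pseudo-effective threshold $e\in(0,1)$ for $B+M$ (Theorem~\ref{t-peff-th-B+M}). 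The proof of Theorem~\ref{t-bir-bnd-M} then combines these: reduce to finite $\Lambda$, invoke Proposition~\ref{p-bir-nM-n-large} to get $|l(K_X+B+nM)|$ birational hence potentially birational, take a minimal model of $K_X+sB+uM$ for suitable $s,u<1$, and assemble a potentially birational divisor numerically of the form $(q+l+1)(K_X+s'B+M)$ with $s'<1$ via Lemma~\ref{l-addNef}. No boundedness of families is used.

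Your small-volume step is where the proposal breaks down. You invoke a ``BAB-type boundedness result for generalized polarized pairs of general type'' and ``boundedness of anticanonical volumes''; BAB concerns Fano varieties, and anticanonical volume is irrelevant when $K_X+B+M$ is big. Even reinterpreted charitably as log-birational boundedness \`a la HMX, this is not established for generalized pairs in the paper and would require substantial additional work: the obstacle is precisely that running an LMMP on $K_X+B+M$ (as opposed to $K_X+B+nM$ with $n$ large) destroys the nef and Cartier properties of $rM$, so the bounded-family and DCC-volume arguments of HMX do not transfer directly. The paper's two-stage detour through large $n$ is exactly designed to avoid this.
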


We call $(X,B+M)$ a \emph{polarized pair}.
When $M=0$, the theorem is  [\ref{HMX2}, Theorem 1.3].
Note that for an $\R$-divisor $D$, by $|D|$ and $H^0(X,D)$ we mean  $|\rddown{D}|$ and
$H^0(X,\rddown{D})$.\\

{\textbf{\sffamily{Generalized polarized pairs.}}}
In order to prove Theorem \ref{t-bir-bnd-M} we need to generalize the definitions of  pairs,
singularities, lc thresholds, adjunction, etc.  We develop
this theory, which is of independent interest, in some detail in Section 4 but for now we only
give the definition of generalized polarized pairs.

\begin{defn}\label{d-g-pol-pair}
A \emph{generalized polarized pair} consists of a normal variety $X'$ equipped with projective
morphisms $X \overset{f}\to X'\to Z$ where $f$ is birational and $X$ is normal, an $\R$-boundary $B'$,
 and an $\R$-Cartier divisor
$M$ on $X$ which is nef$/Z$ such that $K_{X'}+B'+M'$ is $\R$-Cartier,
where $M' := f_*M$. We call $B'$ the \emph{boundary part} and
$M$ the \emph{nef part}.

Note that the definition is flexible with respect to $X,M$. To be more precise,
if $g\colon Y\to X$ is a projective birational morphism
from a normal variety, then there is no harm in replacing $X$ with $Y$ and replacing $M$ with $g^*M$.

For us the most interesting case is when $M=\sum \mu_j M_j$ where $\mu _j \ge 0$ and
$M_j$ are nef$/Z$ Cartier divisors. In many ways $B'+M'$ behaves like a boundary, that is,
it is as if the $M_j'$ were components of the boundary with coefficient $\mu_j$.
Although the coefficients of $B_i'$ belong to
the real interval $[0,1]$ the coefficients $\mu_j$ are only assumed to be nonnegative. Moreover, the $M_j$
are not necessarily distinct. See Section 4 for more details.
\end{defn}

When $X\to X'$ is the identity morphism, we recover the definition of
polarized pairs which was formally introduced in [\ref{BH}] but appeared earlier in the 
subadjunction formula of [\ref{Ka98}]. 
If moreover $M=0$, then $(X',B')$ is just a pair in the traditional sense.\\

{\textbf{\sffamily{ACC for generalized lc thresholds.}}}
The next result shows that the generalized lc thresholds satisfy ACC under suitable
assumptions. We suggest the reader looks at Definitions \ref{d-g-sing} and \ref{d-g-lct}
before continuing.

\begin{thm}\label{t-acc-glct}
Let $\Lambda$ be a DCC set of nonnegative real numbers and $d$ a natural number.
Then there is an ACC set $\Theta$ depending only on $\Lambda,d$ such that
if $(X',B'+M')$, $M$, $N$, and $D'$ are as in Definition \ref{d-g-lct} satisfying

\begin{itemize}
\item[(i)]
$(X',B'+M')$ is generalized lc of dimension $d$,
\item[(ii)]
$M=\sum \mu_jM_j$ where $M_j$ are nef$/Z$ Cartier divisors and $\mu_j\in \Lambda$,
\item[(iii)]
$N=\sum \nu_kN_k$ where $N_k$ are nef$/Z$ Cartier divisors and $\nu_k\in \Lambda$, and
\item[(iv)]
the coefficients of $B'$ and $D'$ belong to $\Lambda$,
\end{itemize}
then the generalized lc threshold of $D'+N'$ with respect to $(X',B'+M')$ belongs to $\Theta$.
\end{thm}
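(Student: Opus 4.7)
The strategy is to argue by contradiction and induction on the dimension $d$, mirroring the proof of ACC for the ordinary lc threshold in [HMX2]. Suppose the conclusion fails: then there exists a strictly increasing sequence of generalized lc thresholds $t_1<t_2<\cdots$ arising from data $(X_i', B_i'+M_i')$, $D_i'$, $N_i$ satisfying (i)--(iv) with common $\Lambda$ and $d$. By the very definition of the threshold, for each $i$ the generalized polarized pair $\bigl(X_i',(B_i'+t_iD_i')+(M_i'+t_iN_i')\bigr)$ is generalized lc and admits a prime divisor $E_i$ over $X_i'$ of generalized log discrepancy exactly $0$.

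Using the flexibility recorded in Definition \ref{d-g-pol-pair}, I would choose a common birational model $f_i\colon X_i\to X_i'$ on which $E_i$ appears as a prime divisor and on which both $M_i$ and $N_i$ are already realized as nef $\R$-Cartier divisors. I would then apply the generalized divisorial adjunction developed in Section 4 along $E_i$, producing a generalized polarized pair $(E_i,B_{E_i}+M_{E_i})$ of dimension $d-1$, which is generalized lc since the log discrepancy of $E_i$ vanishes. The adjunction formula must be set up so that (a) the coefficients of $B_{E_i}$ lie in a DCC set $\Lambda^*$ depending only on $\Lambda, r, d$; (b) $M_{E_i}$ decomposes as a nonnegative sum of nef Cartier divisors whose coefficients again lie in a DCC set, with Cartier index bounded by some $r^*=r^*(\Lambda,r,d)$; and (c) a distinguished coefficient in the restricted data depends monotonically on $t_i$, either of the form $a_i+t_ib_i$ with $b_i>0$ inside $B_{E_i}$, or of the form $t_i\nu_k$ inside the nef decomposition of $M_{E_i}$.

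The inductive hypothesis in dimension $d-1$, applied to $(E_i,B_{E_i}+M_{E_i})$ together with a suitably chosen test divisor built from the $t_i$-dependent summand, then provides an ACC set containing the resulting restricted generalized lc thresholds. Combined with the monotonicity in~(c), this forces the sequence $(t_i)$ to be eventually constant, contradicting strict increase. The base case $d=1$ is immediate, since on a smooth curve the generalized lc condition reduces to elementary inequalities on the coefficients of $B'+sD'$, which live in a DCC set built from $\Lambda$. The main obstacle is verifying (a)--(c): one must show that generalized divisorial adjunction, as formulated in Section 4, preserves DCC for both the boundary coefficients and the coefficients of the nef decomposition, and that it transparently records the linear dependence on $t_i$. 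This is the generalized-pair analogue of the ``different'' calculations of Shokurov, Kawamata, and Hacon--McKernan--Xu, and it is precisely the point where the machinery developed in Section 4 — generalized singularities, generalized adjunction, and their DCC behaviour — is indispensable.
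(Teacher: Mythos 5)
Your overall strategy (restrict to a divisor of generalized log discrepancy zero computed by the threshold, and induct on dimension) is the right Shokurov-type idea, but the induction is mis-structured in a way that cannot be repaired within your framework. The statement you feed to dimension $d-1$ cannot be the ACC for generalized lc thresholds itself. After restricting to $E_i$, the number $t_i$ is \emph{not} exhibited as a generalized lc threshold on $E_i$: the non-klt place computing $t_i$ upstairs is $E_i$ itself, and for $a>t_i$ the failure of the generalized lc condition need not be visible on $E_i$ at all, so there is no ``test divisor'' on $E_i$ whose threshold equals $t_i$. Moreover the coefficients of the $t_i$-dependent summand of $B_{E_i}$ are of the form $d_k/l$ with $l$ a local Cartier index that is not bounded, so they need not even lie in a DCC set. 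This is precisely why the paper (Proposition \ref{p-global-acc-to-glct}), following [\ref{HMX2}], deduces Theorem \ref{t-acc-glct} in dimension $d$ from the \emph{Global ACC} (Theorem \ref{t-global-acc}) in dimension $\le d-1$, with the two statements proved by a joint induction (Propositions \ref{p-global-acc-to-glct} and \ref{p-glct-to-global-acc}). Concretely: one first runs an LMMP over $X_i'$ contracting the effective exceptional excess $E_i$ so that on the resulting model $K_{X_i''}+\Delta_i''+R_i''\equiv 0/X_i'$; one then restricts by generalized adjunction to the extracted divisor $S_i''$ and further to a \emph{general fibre} $F_i''$ of $S_i''\to V_i'$, obtaining a globally numerically trivial generalized pair in dimension $\le d-1$ whose coefficients (by the coefficient formula $\frac{l-1}{l}+\sum b_k\frac{d_k}{l}+\sum\mu_j\frac{e_j}{l}$) fail ACC because of the strictly increasing $t_i$; this contradicts the global statement, not the threshold statement.

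There is a second, related gap: your condition (c), that some restricted coefficient depends on $t_i$ with a strictly positive slope, is exactly the hard point, and it is not automatic for an arbitrary choice of the divisor $E_i$. If $(D_i''+N_i'')$ restricts to something numerically trivial on the general fibre $F_i''$, the $t_i$-dependence washes out and no contradiction arises. The paper handles this by writing $f_i^*(D_i'+N_i')=D_i+N_i+P_i$ with $P_i\ge 0$ exceptional and using the negativity lemma to select a component $S_i''$ of $P_i''$ on which $P_i''$ is negative along a covering family of curves contracted over $X_i'$, which forces $(D_i''+N_i'')|_{F_i''}\not\equiv 0$. You also omit the preliminary reduction to the case $\rddown{B_i'}=\rddown{B_i'+t_iD_i'}$ (otherwise $t_i$ is computed directly from coefficients and DCC gives an immediate contradiction), which is what guarantees the non-klt centre is exceptional and of codimension at least two. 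As written, your argument is either circular (if the lower-dimensional input is secretly the global statement) or incomplete (if it is only the threshold statement).
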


Note that the theorem is a local statement over $X'$,  so $Z$ does not play any role and we
could simply assume $X'\to Z$ is the identity map.

When $X \to X'$ is the identity map, $M = 0$, and $N = 0$, the theorem is 
the usual ACC for lc thresholds [\ref{HMX2}, Theorem 1.1].\\

{\textbf{\sffamily{Global ACC.}}}
The proof of the previous result requires the following global ACC.
We will also use this to bound pseudo-effective thresholds (Theorem \ref{t-peff-th-B+M})
which is in turn used in the proof of Theorem \ref{t-bir-bnd-M}.

\begin{thm}\label{t-global-acc}
Let $\Lambda$ be a DCC set of nonnegative real numbers and $d$ a natural number.
Then there is a finite subset $\Lambda^0\subseteq \Lambda$ depending only on $\Lambda,d$ such that
if $(X',B'+M')$, $X\to X'\to Z$ and $M$ are as in Definition \ref{d-g-pol-pair} satisfying
\begin{itemize}
\item[(i)]
$(X',B'+M')$ is generalized lc of dimension $d$,
\item[(ii)]
$Z$ is a point,
\item[(iii)]
$M=\sum \mu_jM_j$ where $M_j$ are nef Cartier divisors and $\mu_j\in \Lambda$,
\item[(iv)]
$\mu_j=0$ if $M_j\equiv 0$,
\item[(v)]
the coefficients of $B'$ belong to $\Lambda$, and
\item[(vi)]
$K_{X'}+B'+M'\equiv 0$,
\end{itemize}
then the coefficients of $B'$ and the $\mu_j$ belong to $\Lambda^0$.
\end{thm}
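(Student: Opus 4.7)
I would prove Theorem~\ref{t-global-acc} by induction on the dimension $d$, run jointly with Theorem~\ref{t-acc-glct} in dimensions $<d$, in the same spirit as the classical simultaneous induction used to establish global ACC and ACC of lc thresholds in the non-polarized setting. The base case $d=1$ is direct: since $K_{X'}+B'+M'\equiv 0$ with $B'+M'$ pseudo-effective forces $X'\cong\PP^1$, the degree identity
\[
\deg B' \;+\; \sum_{j} \mu_j \deg M_j \;=\; 2,
\]
together with $\deg M_j\ge 1$ whenever $M_j\not\equiv 0$ and assumption (iv), reduces matters to a standard finiteness statement for nonnegative DCC-weighted sums with DCC summands.

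\textbf{Inductive step.} For $d\ge 2$, assume the statements in dimensions $<d$ and argue by contradiction. Suppose we are given an infinite sequence $(X_i',B_i'+M_i')$ satisfying (i)-(vi) in which one coefficient (either of $B_i'$ or some $\mu_{i,j}$) is strictly ascending in $\Lambda$. After passing to a subsequence, all coefficients converge and a specified coefficient $t_i$ is strictly increasing to a limit $t_\infty$. The strategy is to manufacture a generalized lc place and descend via generalized adjunction. Using a generalized dlt modification (provided by the formalism of Section~4), we may assume each $X_i'$ is $\Q$-factorial. Perturbing $B_i'$ or $M_i'$ so as to raise the chosen coefficient from $t_i$ up to $t_\infty$, at the cost of a small effective $\R$-Cartier divisor, and running a suitable generalized MMP against that perturbation, we isolate a prime divisor $S_i'$ which is a generalized lc place of the perturbed pair. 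Generalized adjunction to $S_i'$ then yields a generalized lc pair $(S_i',B_{S_i'}+M_{S_i'})$ of dimension $d-1$ with $K_{S_i'}+B_{S_i'}+M_{S_i'}\equiv 0$, whose boundary coefficients and whose nef part decomposition $\sum \mu_k' N_k$ obey the same structural hypotheses as in the theorem for a DCC set built uniformly from $\Lambda$ and $d$. Applying Theorem~\ref{t-global-acc} in dimension $d-1$ forces these restricted data into a finite set; since $t_i$ enters them through a controlled rational affine relation, $t_i$ can take only finitely many values, contradicting its strict monotonicity.

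\textbf{Main obstacle.} The most delicate step is verifying that generalized adjunction preserves the full structural hypothesis. In classical adjunction only boundary components meet $S'$, so DCC propagation of the Shokurov--Koll\'ar different is essentially automatic. Once $M\ne 0$, the restriction $M|_{S_i'}$ (after passing to a suitable birational model) must again be exhibited as a nonnegative combination $\sum \mu_k' N_k$ of nef Cartier divisors with $\mu_k'$ drawn from a DCC set that depends only on $\Lambda$ and $d$, and the generalized different of $B'+M'$ along $S_i'$ must similarly have DCC coefficients. Establishing this DCC descent, together with the existence of good generalized dlt modifications, is the technical heart of the argument and is precisely what the generalized-pair framework of Section~4 is designed to deliver. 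A secondary subtlety is running the generalized MMP on the perturbation with enough control to guarantee that $S_i'$ is an honest generalized lc place of a nearby pair rather than being contracted or altered uncontrollably.
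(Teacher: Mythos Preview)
Your overall architecture---joint induction on $d$ with the ACC for generalized lc thresholds, and reduction of the non-klt case by generalized adjunction to $S_i'$---matches the paper's Proposition~\ref{p-global-acc-glc}. The DCC control under adjunction that you flag as the ``main obstacle'' is in fact routine here (Proposition~\ref{p-adj-dcc}): the nef part simply restricts as $\sum \mu_{j,i} M_{j,i}|_{S_i}$ with the \emph{same} $\mu_{j,i}$, and the different is handled by the usual local computation. So that part of your plan is fine, but it is not where the difficulty lies.

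The genuine gap is your treatment of the generalized \emph{klt} case. You write that ``perturbing $B_i'$ or $M_i'$ so as to raise the chosen coefficient from $t_i$ up to $t_\infty$ \dots\ we isolate a prime divisor $S_i'$ which is a generalized lc place of the perturbed pair.'' This is precisely the step that does not go through by a naive perturbation: raising $\mu_{1,i}$ to its limit $\mu_1$ need not make the pair non-klt (the generalized lc threshold of $M_{1,i}'$ may well exceed $\mu_1$), and there is no effective divisor in sight to absorb the excess numerically while creating a controlled lc centre. In contrast to the classical case $M=0$, you cannot simply trade one boundary coefficient for another, because the $M_j$ live on $X$, not on $X'$, and their coefficients are not bounded by~$1$.

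The paper's actual argument (Proposition~\ref{p-glct-to-global-acc}) supplies the missing ingredient via Proposition~\ref{p-bir-nM-n-large}: one first shows that $|m(K_{X_i}+B_i+\sum n M_{j,i})|$ is birational for bounded $m,n$, producing an effective divisor $D_i\sim_{\R} K_{X_i}+B_i+\sum n M_{j,i}$ with DCC coefficients. One then runs an induction on $\sigma$, the number of summands $M_{j,i}$, trading $\mu_{1,i}M_{1,i}'$ against a multiple of $D_i'$ and using the ACC for generalized lc thresholds \emph{in dimension $d$} (not $<d$, as you assume) to control the exchange. Only after this elaborate nine-step balancing does one reach a pair that is generalized lc but not klt, where your adjunction argument applies. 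Without the effective divisor $D_i$ and the induction on $\sigma$, there is no mechanism to force an lc centre to appear; this is exactly why the paper singles out Proposition~\ref{p-bir-nM-n-large} as the key input in the introduction.
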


When $X \to X'$ is the identity map and $M = 0$, the theorem is [\ref{HMX2}, Theorem 1.5].\\

{\textbf{\sffamily{About this paper.}}}
It is not hard to reduce Theorems \ref{t-bir-bnd-M} and \ref{t-acc-glct} to Theorem \ref{t-global-acc}.
So most of the difficulties we face have to do with \ref{t-global-acc}.
Since the statement of Theorems \ref{t-bir-bnd-M}, \ref{t-acc-glct}, and \ref{t-global-acc} involve
nef divisors which may not be semi-ample (or effectively semi-ample), there does not seem to be
any easy way to reduce them to the traditional versions (i.e. without nef divisors) proved
in [\ref{HMX2}] or to mimic the arguments in [\ref{HMX2}]. 
Instead we need to develop new ideas and arguments and this occupies much of this paper. 

We briefly explain the organization of the paper.
In Section 3, we prove a special case of Theorem \ref{t-bir-bnd-M} (Proposition \ref{p-bir-nM-n-large})
by closely following [\ref{HMX2}].
In Section 4, we introduce generalized singularities and generalized lc thresholds,
discuss the log minimal model program for generalized polarized pairs,
and treat generalized adjunction.
In Section 5, we give bounds, both in the local and global situations, on the numbers of
components in the boundary and nef parts of generalized polarized pairs, under appropriate assumptions. 
These bounds will be used in the proof of Proposition \ref{p-glct-to-global-acc} which serves as the key inductive 
step toward the proof of Theorem \ref{t-global-acc}. In Section 6, we reduce Theorem 
 \ref{t-acc-glct} to Theorem \ref{t-global-acc} in lower dimension by adapting a standard argument. 
In Section 7, we treat Theorem \ref{t-global-acc} inductively where we apply Proposition \ref{p-bir-nM-n-large}; 
a sketch of the main ideas is included below in this introduction. 
In Section 8, we give the proofs of our main results.  Theorems \ref{t-acc-glct} and \ref{t-global-acc} 
follow immediately from Sections 6 and 7. 
To prove Theorem \ref{t-bir-bnd-M}, we use Theorem \ref{t-global-acc}
to bound certain pseudo-effective thresholds (Theorem \ref{t-peff-th-B+M}) and use the concept of 
potential birationality [\ref{HMX2}] to reduce to the special case of Proposition \ref{p-bir-nM-n-large}.
Finally, we extend Theorem \ref{t-bir-bnd-M} to
allow more general coefficients in the nef part of the pair (see Theorem 8.2), and
deduce Theorem \ref{ThA} from Theorem \ref{t-bir-bnd-M} as in [\ref{FM}][\ref{VZ}].\\

{\textbf{\sffamily{A few words about the proof of Theorem \ref{t-global-acc}.}}}
We try to explain, briefly, some of the ideas used in the proof of \ref{t-global-acc}.
By [\ref{HMX2}, 1.5] we can assume $M\not \equiv 0$. The basic strategy is to
modify $(X',B'+M')$ so that the nef part has one less coefficient $\mu_j$ and
then repeat this to reach the case $M=0$.
Running appropriate LMMP's we can reduce the problem to the case when
$X'$ is a $\Q$-factorial klt Fano variety with Picard number one.
Moreover, some lengthy arguments show that the number of the
$\mu_j$ is bounded (Section \ref{s-bnd-comp}). If $(X',B'+M')$ is not
generalized klt, one can do induction: for example if $\rddown{B'}\neq 0$,
then we let $S'$ be the normalization of a component of $\rddown{B'}$ and
use generalized adjunction (see Definition \ref{d-q-adjunction}) to write
$$
K_{S'}+B_{S'}+M_{S'}=(K_{X'}+B'+M')|_{S'}
$$
and apply induction to the generalized lc polarized pair $({S'},B_{S'}+M_{S'})$.
So we can assume  $(X',B'+M')$ is generalized klt.

Although we cannot use the arguments of [\ref{HMX2}] to prove Theorem \ref{t-global-acc}
but there is an exception: if we take $n\in \N$ to be sufficiently large, then following
[\ref{HMX2}] closely one can show that there is $m\in\N$ depending only on $\Lambda, d$ such that
$|m(K_X+B+\sum nM_j|)|$ defines a birational map (Proposition \ref{p-bir-nM-n-large}) where 
$B$ is the sum of the birational transform of $B'$ and the reduced exceptional divisor 
of $X\to X'$.
One can then show that there is an $\R$-divisor $D$ such that
$$
0\le D\sim_\R K_X+B+\sum nM_j
$$
where the coefficients of $D$ belong to some DCC set depending only on $\Lambda, d$.
Then the pushdown $D'$ of $D$ satisfies
$$
D'\sim_\R K_{X'}+B'+\sum nM_j'\equiv \sum (n-\mu_j)M_j'\equiv \rho M_{1}'
$$
for some number $\rho$. Changing the indexes one can assume that $\rho$ belongs to some
ACC set depending only on $\Lambda, d$. Let $N=M-\mu_1M_1$. Now the idea is to take $s,t$,
with $s$ maximal, so that
$$
 K_{X'}+B'+sD'+N'+tM_1' \equiv K_{X'}+B'+M'
$$
and that $({X'},B'+sD'+N'+tM_1')$ is generalized lc. If it happens to have $t=0$, then
$s$ would belong to some DCC set and we can replace $B'$ with $B'+sD'$ and replace $M$
with $N$ which has one less summand, and repeat the process. But if $t>0$,
then $({X'},B'+sD'+N'+tM_1')$ is generalized lc but not generalized klt. We cannot simply
apply induction because the $s,t$ may not belong to a DCC set. For simplicity assume
$\rddown{B'+sD'}\neq 0$ and let $S'$ be one of its components and assume $S'$ is normal.
The idea is to keep $S'$ but to remove the other components of $D'$ and
increase $t$ instead so that we get
$$
 K_{X'}+B'+\tilde{s}S'+N'+\tilde{t}M_1' \equiv K_{X'}+B'+M'
$$
for some $\tilde{s}$ and $\tilde{t}\ge t$ where $S'$ is a component of $\rddown{B'+\tilde{s}S'}$.
Now it turns out $\tilde{t}$ belongs to some DCC set
and we can apply induction by restricting to $S'$.\\

{\textbf{\sffamily{Acknowledgements.}}}
The first author was partially supported by a grant of the Leverhulme Trust.
Part of this work was done when the first author visited National University of Singapore
in April 2014.
Part of this work was done when the first author visited National Taiwan University in August-September 2014
with the support of the
Mathematics Division (Taipei Office) of the National Center for Theoretical Sciences.
The visit was arranged by Jungkai A. Chen.
 He wishes to thank them all.
The second author was partially supported by an ARF of National University of Singapore.
The authors would like to thank the referee for the very useful corrections and suggestions 
which helped to simplify and clarify some of the proofs.

%
%
%

\section{Preliminaries}

\par \vskip 1pc

{\textbf{\sffamily{Notation and terminology.}}} All the varieties in this paper are quasi-projective
over $\mathbb{C}$ unless stated otherwise.
For definitions and basic properties of singularities of pairs such as log canonical
(lc), Kawamata log terminal (klt), divisorially log terminal (dlt), purely log terminal (plt),
and the log minimal model program
(LMMP) we refer to [\ref{KM}]. We recall some notation:

\begin{itemize}
\item
The sets of natural, integer, rational, and real numbers are respectively denoted as
$\N, \Z, \Q, \R$.

\item
Divisors on normal varieties are always Weil $\R$-divisors unless otherwise stated.

\item
Let $X \to Z$ be a projective morphism from a normal variety.
\emph{Linear equivalence}, $\Q$-\emph{linear equivalence},
$\R$-\emph{linear equivalence}, and \emph{numerical equivalence} over $Z$,
between two $\R$-divisors $D_1,D_2$
on $X$ are respectively denoted as $D_1 \sim D_2/Z$, $D_1 \sim_{\Q} D_2/Z$, $D_1 \sim_{\R} D_2/Z$, and
$D_1 \equiv D_2/ Z$. If $Z$ is a point, we usually drop the $Z$.

\item
If $\phi\colon X \bir X'$ is a birational morphism whose inverse does not contract divisors,
and $D$ is an  $\R$-divisor on $X$,
we usually write $D'$ for  $\phi_*D$. If $X'$ is replaced by $X''$ (resp. $Y$) we usually
write  $D''$ (resp. $D_Y$) for $\phi_*D$.

\item Let $X,Y$ be normal varieties projective over some base $Z$, and $\phi\colon X\bir Y$ a
birational map$/Z$ whose inverse does not contract any divisor.
Let $D$ be an $\R$-Cartier divisor on $X$ such that $D_Y$ is also $\R$-Cartier.
We say $\phi$ is \emph{$D$-negative} if there is a common resolution $g\colon W\to X$ and $h\colon W\to Y$ such that
$E:=g^*D-h^*D_Y$ is effective and exceptional$/Y$, and
$\Supp g_*E$ contains all the exceptional divisors of $\phi$.
\end{itemize}

\par \vskip 1pc

{\textbf{\sffamily{ACC and DCC sets.}}}
A sequence $\{a_i\}$ of numbers is {\it increasing} (resp. {\it strictly increasing})
if $a_i \le a_{i+1}$ (resp. $a_i < a_{i+1}$) for all $i$.
The definition of a decreasing or strictly decreasing sequence is similar.
A set $\Lambda$ of real numbers satisfies DCC (descending chain condition)
if it does not contain a strictly decreasing infinite sequence.
A set $\Omega$ of real numbers satisfies ACC (ascending chain condition)
if it does not contain a strictly increasing infinite sequence.

\begin{lem}\label{l-ACC-DCC}
Let $\Lambda$ and $\Omega$ be sets of nonnegative real numbers.
Define
$$
\Lambda+\Omega=\{a+b \mid a\in \Lambda, b\in \Omega\}
$$
and
$$
\Lambda\cdot \Omega=\{ab \mid a\in \Lambda, b\in \Omega\} .
$$
Then the following hold:

$(1)$ If $\Lambda$ and $\Omega$ are both ACC sets (resp. DCC sets), then
$\Lambda+\Omega$ and $\Lambda\cdot\Omega$ are also ACC sets (resp. DCC sets).

$(2)$ Let $\{a_i\} \subseteq \Lambda$ and $\{b_i\} \subseteq \Omega$ be sequences of numbers.
Assume that both sequences are increasing and that one of them is strictly
increasing. Then the sequences $\{a_i+b_i\}$ and $\{a_ib_i\}$ are strictly increasing.

$(3)$ A statement similar to $(2)$ holds if we replace `increasing' by `decreasing'.

$(4)$ Let $m,l\in \N$. Assume that $\Lambda$ is a DCC set and that $a\le l$
for every $a\in \Lambda$.  Then the set $\{\langle ma\rangle \, | \, a \in \Lambda \}$
also satisfies DCC, where $\langle ma\rangle:=ma-\rddown{ma}$, that is, the fractional
part of $ma$.
\end{lem}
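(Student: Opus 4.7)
The plan is to handle the four parts in order, with the first one being the substantive step that organizes the rest. The key tool for part (1) is the standard observation that any sequence in a DCC set admits a weakly increasing subsequence (otherwise one builds a strictly decreasing subsequence by picking "peaks," contradicting DCC), and symmetrically any sequence in an ACC set admits a weakly decreasing subsequence.

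For part (1), I would argue by contradiction. Suppose $\Lambda$ and $\Omega$ are DCC and that $c_i = a_i + b_i$ with $a_i \in \Lambda$, $b_i \in \Omega$ is strictly decreasing in $\Lambda + \Omega$. Passing to a subsequence I may assume $a_i$ is weakly increasing; then $b_i = c_i - a_i$ is strictly decreasing, contradicting that $\Omega$ is DCC. The product case runs the same way: since all numbers are nonnegative and $c_i = a_i b_i$ is strictly decreasing, a tiny argument shows every $c_i > 0$ and hence every $a_i, b_i > 0$, so after passing to a subsequence on which $a_i$ is weakly increasing, $b_i = c_i/a_i$ is strictly decreasing in $\Omega$, contradiction. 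The ACC statements are symmetric: extract weakly decreasing subsequences instead and obtain strictly increasing ones in the other factor.

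Parts (2) and (3) are direct comparisons. For sums, $a_i + b_i < a_{i+1} + b_{i+1}$ is immediate from adding a strict and a weak inequality. For products, chain the inequalities as $a_i b_i \le a_{i+1} b_i \le a_{i+1} b_{i+1}$ (or the analogous decreasing version), with the strictness coming from whichever factor is strictly monotone, once one has ensured the relevant terms are positive (which is automatic away from trivial zero sequences). Part (3) follows from the same calculation with inequalities reversed.

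For part (4), the key point is that the hypothesis $a \le l$ makes the integer part $\lfloor ma \rfloor$ take only finitely many values, namely in $\{0,1,\ldots,\lfloor ml\rfloor\}$. Given any putative strictly decreasing sequence $b_i = \langle m a_i \rangle$, I pass to a subsequence on which $\lfloor m a_i \rfloor$ is some fixed integer $k$; then $b_i = m a_i - k$ is still strictly decreasing, so $a_i$ is strictly decreasing in $\Lambda$, contradicting DCC. I do not expect any real obstacle here; the only minor care needed across the lemma is bookkeeping of the zero case in the product statement of (1) and (2), which is why I separate out positivity before invoking the division trick.
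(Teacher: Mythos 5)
Your proof is correct; the paper gives no argument at all here (the proof is ``left to the reader''), and the monotone-subsequence extraction you use for (1) together with the finiteness of $\rddown{ma}$ for (4) is exactly the standard argument intended. The only caveat, which you already flag, is that the product assertions in (2)--(3) literally fail for degenerate zero sequences (e.g.\ $b_i\equiv 0$ with $a_i$ strictly increasing gives $a_ib_i\equiv 0$); this is an imprecision in the statement rather than in your proof, and is harmless in every application in the paper.
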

\begin{proof}
The proof is left to the reader. \\
\end{proof}

\begin{lem}\label{l-ordering-divs}
Let $d,r$ be natural numbers.
Let $X_i$ be a sequence of normal projective varieties of dimension $d$ and Picard number one.
Assume that $D_{1,i}, \dots, D_{r,i}$ are nonzero $\R$-Cartier divisors on $X_i$. Let
$\lambda_{j,i}$ be the numbers such that $D_{j,i}\equiv\lambda_{j,i}D_{1,i}$.
Then possibly after
replacing the sequence with an infinite subsequence and rearranging the indexes,
the sequence $\lambda_{j,i}$ is a decreasing sequence for each $j$.
\end{lem}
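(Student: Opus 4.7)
The plan is to pass to an infinite subsequence along which all pairwise ratios $\lambda_{j,i}/\lambda_{k,i}$ are monotone in $i$, then locate an index $k^*$ whose sequence $\lambda_{k^*,i}$ \emph{grows fastest}, in the sense that each $\lambda_{j,i}/\lambda_{k^*,i}$ is non-strictly decreasing in $i$; any permutation $\sigma$ of $\{1,\dots,r\}$ with $\sigma(1)=k^*$ then serves as the required rearrangement, because after relabelling $D_{j,i}\mapsto D_{\sigma(j),i}$ the normalising divisor becomes $D_{k^*,i}$ and the new $\lambda$'s are $\lambda_{\sigma(j),i}/\lambda_{k^*,i}$.

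First I would apply the elementary fact that every sequence of real numbers admits a monotone subsequence to each of the $r^2$ sequences $(\lambda_{j,i}/\lambda_{k,i})_i$ in turn; nested subsequence extractions reduce us to the case where every such ratio is monotone in $i$, and by passing to a further tail (using that each $\lambda_{j,i}$ is nonzero) one may also assume each $\lambda_{j,i}$ has constant sign throughout. In the applications of this lemma, the divisors in play are nef on a $\Q$-factorial Fano variety of Picard number one (as in the sketch of the proof of Theorem \ref{t-global-acc} in the introduction), so all nonzero classes are positive multiples of a single ample generator and one may assume $\lambda_{j,i}>0$ for every $j,i$. Next, define a relation on $\{1,\dots,r\}$ by $j\preceq k$ iff $(\lambda_{j,i}/\lambda_{k,i})_i$ is non-strictly decreasing. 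Totality is built into the subsequence choice, and positivity of the $\lambda$'s makes $\preceq$ transitive, since a product of two nonnegative non-strictly decreasing sequences is non-strictly decreasing. Thus $\preceq$ is a total preorder on a finite set, hence admits a maximal element $k^*$, and any permutation $\sigma$ with $\sigma(1)=k^*$ completes the argument.

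The main obstacle is the transitivity of $\preceq$, which really relies on positivity: for sequences of mixed sign the product of two non-strictly decreasing sequences can fail to be non-strictly decreasing, and one can write down examples on $\PP^1$ in which no single rearrangement produces simultaneous decrease. So the heart of the proof is the reduction to constant positive sign, which in the paper is supplied by the nef/Fano geometric setting where the lemma is invoked; the remaining ingredients (monotone-subsequence diagonalisation and the existence of a maximal element in a finite preorder) are entirely routine.
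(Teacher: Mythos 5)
Your proof is correct and follows essentially the same route as the paper's: pass to a subsequence along which every ratio $\rho_{j,k,i}=\lambda_{j,i}/\lambda_{k,i}$ is monotone, use the multiplicativity $\rho_{j,l,i}=\rho_{j,k,i}\rho_{k,l,i}$ to get a transitive total preorder on the indices, and relabel so that a maximal index becomes the normalizing divisor $D_{1,i}$. Your remark about signs is well taken: the paper's transitivity step tacitly assumes the $\lambda_{j,i}$ are positive (and indeed with mixed signs one can cook up a three-divisor counterexample on $\PP^1$), but this positivity is automatic in the only place the lemma is applied, where the $D_{j,i}$ are nef and not numerically trivial on a variety of Picard number one.
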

\begin{proof}
Let $\rho_{j,k,i}$ be the numbers such that $D_{j,i}\equiv \rho_{j,k,i}D_{k,i}$.
Replacing the sequence we may assume that for each $j,k$ the sequence $\rho_{j,k,i}$ is increasing
or decreasing.
If $\rho_{j,k,i}$ is decreasing we write $j\trianglelefteq k$. This relation is
associative, that is, if $j\trianglelefteq k$ and $k\trianglelefteq l$,
then $j\trianglelefteq l$ because $\rho_{j,l,i}=\rho_{j,k,i}\rho_{k,l,i}$. So we can order the sequences of divisors
according to this relation. Changing the indexes we may assume that
$r\trianglelefteq \cdots \trianglelefteq 1$ which in particular means that the
$\lambda_{j,i}=\rho_{j,1,i}$ form a decreasing sequence for each $j$.\\
\end{proof}

{\textbf{\sffamily{Minimal models and Mori fibre spaces.}}}\label{ss-pair-mmodel}
Let $ X\to Z$ be a
projective morphism of normal varieties and $D$ an $\R$-Cartier divisor
on $X$. A normal variety $Y$ projective over $Z$ together with a birational map $\phi\colon X\bir Y/Z$
whose inverse does not contract any divisor is called a \emph{minimal model of $D$ over $Z$} if:

$(1)$ $Y$ is $\Q$-factorial,

$(2)$ $D_Y=\phi_*D$ is nef$/Z$, and

$(3)$ $\phi$ is $D$-negative.

If one can run an LMMP on $D$ over $Z$ which terminates with a $\Q$-factorial model $Y$ on which $D_Y$ is nef$/Z$,
then $Y$ is a minimal model of $D$ over $Z$.

On the other hand, we call $Y$ a \emph{Mori fibre space} of $D$ over $Z$ if $Y$ satisfies the
above conditions with condition $(2)$ replaced by:

$(2)'$ there is an extremal contraction
$Y\to T/Z$ such that $-D_Y$ is ample$/T$.

In practice, we consider minimal models
and Mori fibre space for $K_{X'}+B'+M'$ where $(X',B'+M')$ is a generalized polarized pair. \\

{\textbf{\sffamily{Some notions and results of [\ref{HMX2}].}}}
For convenience we recall some technical notions and results of [\ref{HMX2}]
which will be used in Section 3.

Let $X$ be a normal projective variety, and let $D$ be a big
$\Q$-Cartier $\Q$-divisor on $X$. We say that $D$ is \emph{potentially
birational} [\ref{HMX2}, Definition 3.5.3]
if for any pair $x$ and $y$ of general points of $X$, possibly
switching $x$ and $y$, we can find $0 \le  \Delta \sim_\Q (1 - \epsilon)D$
for some $0 < \epsilon < 1$ such that
$(X, \Delta)$ is not klt at $y$ but $(X, \Delta)$ is lc at $x$ and
$\{x\}$ is a non-klt centre.

\begin{thm}[{[\ref{HMX2}, Theorem 3.5.4]}]\label{t-HMX3.5.4}
Let $(X, B)$ be a klt pair, where X
is projective of dimension $d$, and let $H$ be an ample $\Q$-divisor. Suppose there
exist a constant $\gamma \ge 1$ and a family $V \to C$ of subvarieties of $X$ with the following
property:
if $x$ and $y$ are two general points of $X$ then, possibly switching $x$ and $y$, we
can find $c \in C$ and $0 \le \Delta_c \sim_\Q (1 -\delta)H$, for some $\delta > 0$,
such that $(X, B + \Delta_c)$
is not klt at $y$ and there is a unique non-klt place of $(X, B + \Delta_c)$ whose
centre $V_c$ contains $x$. Further assume there is
a divisor $D$ on $W$, the normalization of $V_c$, such that the linear system $|D|$ defines a
 birational map and
$\gamma H|_W -D$ is pseudo-effective.
Then $mH$ is potentially birational, where $m = 2p^2 \gamma + 1$ and $p = \dim V_c$.
\end{thm}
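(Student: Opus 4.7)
The plan is to verify the definition of potential birationality for $mH$: for two general points $x, y \in X$, after possibly switching them, I will construct an $\R$-divisor $\Delta \sim_{\Q} (1-\eta)mH$ so that $(X, \Delta)$ is not klt at $y$, is lc at $x$, and has $\{x\}$ as a non-klt centre. The hypothesis already provides $\Delta_c \sim_{\Q} (1-\delta)H$ that handles the non-klt condition at $y$ and produces $V_c$ as the unique non-klt centre through $x$, so the task reduces to cutting $V_c$ down to the point $x$ by means of $|D|$ on $W$ and then transporting the result back to $X$.

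First I would perform a tie-breaking perturbation on $X$: replacing $\Delta_c$ by a small convex combination with a general element of a sufficiently ample linear system, I may assume $(X, B + \Delta_c)$ is plt in a neighbourhood of the generic point of $V_c$ while preserving the non-klt condition at $y$ and the $\Q$-linear equivalence class, losing only an arbitrarily small amount of $\delta$. Then Kawamata's subadjunction, in the refined form in which the boundary on the minimal non-klt centre is effective and klt, produces an effective $\Q$-divisor $\Theta$ on the normalisation $W$ of $V_c$ with
$$(K_X + B + \Delta_c)|_W \sim_{\Q} K_W + \Theta$$
and $(W, \Theta)$ klt. Using the birationality of $|D|$ on the $p$-dimensional variety $W$, I would then apply the standard Koll\'ar-style creation of singularities: after passing to a multiple of order roughly $2p$, one finds an effective $D_W \sim_{\Q} (1-\delta')\cdot 2pD$ with $(W, \Theta + D_W)$ lc at the image $\bar{x}$ of $x$ and $\{\bar{x}\}$ a non-klt centre. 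The pseudo-effectivity of $\gamma H|_W - D$ then lets me replace $D_W$, up to an additional effective summand, by a divisor in the class $(1-\delta'')\cdot 2p\gamma H|_W$.

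Finally, I would lift $D_W$ from $W$ to $X$ via the extension theorem for adjoint line bundles, which applies because of the plt structure near $V_c$: Nadel/Kawamata--Viehweg vanishing on $X$ twisted by $K_X + B + \Delta_c$ plus an ample clearance shows that the relevant section on $W$ lifts to $X$ and produces an effective $\Q$-divisor $E \sim_{\Q} 2p^2\gamma H$ whose restriction to $W$ dominates $D_W$; the extra factor of $p$ over the $2p\gamma$ appearing on $W$ comes from the ample twist needed to force vanishing in the extension step. Setting $\Delta := \Delta_c + E + (\text{tiny ample correction})$ then yields a boundary in the class $(1-\eta)mH$ with $m = 2p^2\gamma + 1$ meeting all three requirements of potential birationality. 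The main obstacle is the extension step itself: one must lift while controlling the linear equivalence class on $X$, and this relies both on the uniqueness of the non-klt place over $V_c$ (arranged by tie-breaking) and on the pseudo-effectivity bound $\gamma H|_W - D$ (to translate birationality on $W$ into a class on $X$). The attendant subtlety is the careful coefficient bookkeeping required to land at exactly $m = 2p^2\gamma + 1$ rather than a larger integer.
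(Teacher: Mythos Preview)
This theorem is not proved in the paper: it is quoted verbatim from [\ref{HMX2}, Theorem 3.5.4] in the preliminaries section as a result to be used later (in Proposition \ref{p-bir-nM-n-large}), and no proof is given. There is therefore nothing in the paper to compare your attempt against.

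For what it is worth, your outline follows the standard shape of the argument in [\ref{HMX2}]: perturb to make $V_c$ a minimal non-klt centre with a unique place, use subadjunction to pass to $(W,\Theta)$, use birationality of $|D|$ to isolate the point on $W$, and then invoke inversion of adjunction to transport the construction back to $X$. One point to be careful about is your ``extension theorem'' step: the argument in [\ref{HMX2}] does not lift sections from $W$ to $X$ via vanishing. Rather, since $\gamma H|_W - D$ is pseudo-effective and $|D|$ defines a birational map, one works directly with the restriction $H|_W$ and uses the diophantine/multiplicity argument on $W$ to build a divisor in a multiple of $H|_W$; the passage back to $X$ is then handled by inversion of adjunction for the minimal non-klt centre, not by lifting sections. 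Your bookkeeping of the constant $m = 2p^2\gamma + 1$ is also loose (the factor $p^2$ in [\ref{HMX2}] arises from an induction on $\dim V_c$ with a doubling at each step, not from a single ample twist), so as written the sketch would not land on the stated constant.
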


\begin{thm}[{[\ref{HMX2}, Theorem 4.2]}]\label{t-HMX4.2}
Let $\Lambda$ be a subset of $[0, 1]$ which contains $1$. Let $X$ be a
projective variety of dimension $d$, and let $V$ be a
subvariety, with normalization $W$. Suppose we are given an $\R$-boundary $B$ and an
$\R$-Cartier divisor $G\ge 0$, with the following properties:
\begin{itemize}
\item[(1)]
the coefficients of $B$ belong to $\Lambda$;
\item[(2)]
$(X, B)$ is klt; and
\item[(3)]
there is a unique non-klt place $\nu$ for $(X, B + G)$, with
centre $V$.
\end{itemize}
Then there is an $\R$-boundary $B_W$ on $W$ whose coefficients belong to
$$
\{a \, | \, 1-a \in LCT_{d-1}(D(\Lambda))\} \cup \{1\}
$$
such that the difference
$$
(K_X + B + G) |_W - (K_W + B_W)
$$
is pseudo-effective.

Now suppose that $V$ is the general member of a covering family of subvarieties of $X$.
Let $\psi : U \to W$ be a log resolution of $(W,B_W)$, and let $B_U$ be the sum
of the birational transform of $B_W$ and the reduced exceptional divisor of $\psi$. Then
$$
K_U + B_U \ge (K_X + B) |_U .
$$
The notation $|_W$ and $|_U$ mean pullback to $W$ and $U$ respectively.
\end{thm}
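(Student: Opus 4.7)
The plan is to apply Kawamata's canonical bundle formula (subadjunction) on a carefully chosen resolution, tracking the coefficients of the discriminant part through a dimensional induction on lc thresholds.

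First I would take a log resolution $f\colon Y\to X$ of $(X,B+G)$ such that the unique non-klt place $\nu$ is realized as a prime divisor $E$ on $Y$ with $a(E,X,B+G)=0$, and write $K_Y+E+B_Y = f^*(K_X+B+G)+F$, where $F\ge 0$ is exceptional over $X$ and all other components of $B_Y$ have coefficients in $[0,1)$ (by uniqueness of $\nu$). Since the centre of $E$ on $X$ equals $V$, the map $f|_E$ factors through the normalization, yielding a contraction $g\colon E\to W$. Restricting to $E$ via adjunction gives
\[
K_E + (B_Y - F)|_E \;=\; f^*(K_X+B+G)|_E.
\]

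Next I would apply Ambro--Kawamata subadjunction to $g\colon (E,(B_Y-F)|_E)\to W$, producing an $\R$-linear identity
\[
K_E + (B_Y-F)|_E \;\sim_{\R}\; g^*(K_W + B_W + J_W),
\]
in which $J_W$ is the (pseudo-effective) moduli part and $B_W$ is the discriminant. The coefficient of $B_W$ along a prime $P\subset W$ is $1-t_P$, where $t_P$ is an lc threshold computed on a general fibre of $g$ over $P$ — a variety of dimension at most $d-1$ — of $g^*P$ against the boundary induced by $B_Y|_E$. Since the coefficients of $B_Y|_E$ lie in $D(\Lambda)$ by divisorial adjunction, each $t_P$ lies in $LCT_{d-1}(D(\Lambda))\cup\{0\}$, forcing the coefficients of $B_W$ into the claimed set $\{a\mid 1-a\in LCT_{d-1}(D(\Lambda))\}\cup\{1\}$. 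Pushing forward the display via $g_*$ and absorbing the effective exceptional contribution from $F$ yields pseudo-effectivity of $(K_X+B+G)|_W - (K_W+B_W)$.

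For the covering-family statement, I would spread the construction: put $V$ and $E$ in families over the parameter base, take a simultaneous log resolution, and restrict to a general member so that $B_W$ is obtained by restriction. Given any log resolution $\psi\colon U\to W$ of $(W,B_W)$, the divisor $B_U$ — reduced exceptional plus birational transform of $B_W$ — dominates every component that can appear with positive coefficient in $\psi^*(K_W+B_W)$; combined with the pseudo-effective inequality from the first part and the fact that $(K_X+B)|_U\le (K_X+B+G)|_U$, a direct discrepancy comparison yields $K_U+B_U\ge (K_X+B)|_U$.

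The main obstacle is the coefficient bookkeeping in the subadjunction step: one must verify that the discriminant part lands in the prescribed LCT set, which relies on the precise behaviour of lc thresholds under divisorial restriction together with the inductive structure of $D(\Lambda)$. Nefness (hence pseudo-effectivity) of the moduli part is classical, but the identification of the admissible coefficients of $B_W$ with an lct set in dimension $d-1$ — including the treatment of the boundary case coefficient $1$ arising from non-lc restrictions — is the delicate point of the argument.
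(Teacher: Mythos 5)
The paper does not prove this statement: Theorem \ref{t-HMX4.2} is imported verbatim from [HMX2, Theorem 4.2] and used as a black box (together with Remark \ref{rem-Lambda'}), so there is no in-paper argument to compare yours against. Measured against the source, your sketch does follow the same skeleton as Hacon--M$^{\rm c}$Kernan--Xu's proof (which itself builds on Kawamata's subadjunction [Ka98] and the canonical bundle formula): extract the unique non-klt place as a divisor $E$ on a log resolution, restrict to obtain a relatively trivial fibration $E\to W$, define $B_W$ as the discriminant whose coefficient along a prime $P$ is $1-t_P$ for an lc threshold $t_P$ computed on the $(d-1)$-dimensional $E$ over the generic point of $P$ against boundary data with coefficients in $D(\Lambda)$, and identify pseudo-effectivity of the difference with positivity of the moduli part.

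Two points you dismiss as routine carry the real weight. First, "nefness of the moduli part is classical" is too quick here: the fibration $(E,(B_Y-F)|_E)\to W$ is only sub-klt (the boundary has negative components coming from $F$) and has $\R$-coefficients, so one needs the Ambro--Kawamata positivity for klt-trivial fibrations together with a perturbation/approximation step; relatedly, your claim that the coefficients of $B_Y|_E$ lie in $D(\Lambda)$ is not automatic, since the components of $G$ and the $f$-exceptional divisors contribute coefficients outside $D(\Lambda)$, and the reduction of $t_P$ to a threshold in $LCT_{d-1}(D(\Lambda))$ is exactly the bookkeeping that must be done carefully (you correctly flag this as the delicate point, but flagging it is not doing it). Second, and more seriously, the final inequality $K_U+B_U\ge (K_X+B)|_U$ is not a "direct discrepancy comparison" combined with the pseudo-effectivity from the first part: that pseudo-effectivity is a global statement, whereas the claimed inequality is divisor-by-divisor on $U$, and it is false without the hypothesis that $V$ is a general member of a covering family. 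The proof must use that generality essentially (restriction of $(X,B)$ to a general member of a covering family behaves like adjunction along a generic complete intersection, so no coefficient of $(K_X+B)|_U-K_U$ along a divisor of $U$ exceeds the corresponding coefficient of $B_U$). Your proposal does not explain where the covering-family hypothesis enters, and that is the gap you would need to close.
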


\begin{rem}\label{rem-Lambda'}
Assume that the $\Lambda$ in \ref{t-HMX4.2} satisfies DCC. Then the hyperstandard set $D(\Lambda)$
also satisfies DCC, hence the set of lc thresholds
$LCT_{d-1}(D(\Lambda))$
satisfies ACC by the ACC for usual lc thresholds [\ref{HMX2}, Theorem 1.1]. Therefore, the set
$$
\{a \, | \, 1-a \in LCT_{d-1}(D(\Lambda))\} \cup \{1\}
$$
to which the coefficients of $B_U$ belong, also satisfies DCC.\\
\end{rem}

%
%
%

\section{Effective birationality of $K + B + nM$}

In this section, following [\ref{HMX2}] closely, we prove a special case of Theorem \ref{t-bir-bnd-M} (see \ref{p-bir-nM-n-large})
which will be used in Sections \ref{s-global-acc} and 8 in proving 
Theorems \ref{t-global-acc} and \ref{t-bir-bnd-M}. 
This special case concerns effective birationality 
for big divisors of the form $K_X+B+nM$ where $(X,B)$ is projective lc, $rM$ is nef and Cartier, 
and $n/r$ is large enough. Running an LMMP on $K_X+B+nM$ preserves the nef and Cartier properties of 
$rM$ by boundedness of length of extremal rays [\ref{Ka91}] which allows one to apply the methods of 
[\ref{HMX2}]. In contrast if one runs an LMMP on $K_X+B+M$, the nef and Cartier properties of 
$rM$ may be lost, hence one needs to consider generalized polarized pairs which will be discussed in 
later sections. 

First we prove a few lemmas.

\begin{lem}\label{l-addNef}
Let $X$ be a normal projective variety, $D$ a big
$\Q$-Cartier $\Q$-divisor, and $G$ a nef $\Q$-Cartier $\Q$-divisor on $X$.
If $D$ is potentially birational,
then $D+G$ is also potentially birational.
In particular, $|K_X + \lceil D + G \rceil|$ defines a birational map.
\end{lem}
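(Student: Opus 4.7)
The plan is to verify the definition of potentially birational for $D+G$ directly, reducing it to the corresponding property of $D$. I would begin by fixing two general points $x,y\in X$; by hypothesis, after possibly switching them, there exist $0<\epsilon<1$ and an effective $\Q$-divisor $\Delta\sim_\Q(1-\epsilon)D$ with $(X,\Delta)$ lc at $x$, $\{x\}$ a non-klt centre, and $(X,\Delta)$ not klt at $y$. My goal is to promote $\Delta$ to an effective $\Delta'\sim_\Q(1-\epsilon')(D+G)$ enjoying the same three local properties at $x$ and $y$, for some $0<\epsilon'<1$.

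The key identity would be
\[
(1-\tfrac{\epsilon}{2})(D+G)-(1-\epsilon)D\;\sim_\Q\;\tfrac{\epsilon}{2}D+(1-\tfrac{\epsilon}{2})G,
\]
whose right hand side is big, since $D$ is big and $G$ is nef. Because $x$ and $y$ are general, they lie outside the augmented base locus of this big class, so a standard Kodaira-plus-Bertini argument produces an effective $\Q$-divisor $F\sim_\Q\tfrac{\epsilon}{2}D+(1-\tfrac{\epsilon}{2})G$ with $x,y\notin\Supp F$. Setting $\Delta':=\Delta+F$ and $\epsilon':=\epsilon/2$ yields $\Delta'\ge 0$ and $\Delta'\sim_\Q(1-\epsilon')(D+G)$.

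The next step is to check singularities. Since $F$ vanishes in open neighbourhoods of $x$ and $y$, the log discrepancies of $(X,\Delta')$ over these two points agree with those of $(X,\Delta)$. Therefore $(X,\Delta')$ is lc at $x$, $\{x\}$ remains a non-klt centre of $(X,\Delta')$, and $(X,\Delta')$ is not klt at $y$, so $D+G$ is potentially birational. For the \emph{in particular} clause, note that $D+G$ is big (big plus nef), and I would then quote the standard birationality criterion for potentially birational big $\Q$-divisors from [\ref{HMX2}] to conclude that $|K_X+\lceil D+G\rceil|$ defines a birational map.

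The only step that genuinely needs care is the construction of $F$ simultaneously missing both $x$ and $y$ while sitting in the correct $\Q$-linear equivalence class; once that is secured, the rest is purely local bookkeeping at points disjoint from the added support, so no singularity property of $(X,\Delta)$ at $x$ or $y$ is disturbed.
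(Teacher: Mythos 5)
Your proof is correct and follows essentially the same route as the paper's: both arguments add to $\Delta$ an effective $\Q$-divisor in the class of $(1-\epsilon')(D+G)-(1-\epsilon)D$ chosen to miss the general points $x,y$ (the paper does this via an explicit decomposition $D\sim_\Q A+B$ with $A$ ample and $B\ge 0$, which is exactly the mechanism behind your ``general points avoid the augmented base locus'' step), so the singularities at $x$ and $y$ are untouched. The final clause is likewise handled in both cases by citing the standard criterion that $|K_X+\lceil H\rceil|$ is birational for $H$ potentially birational.
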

\begin{proof}
Write $D \sim_\Q A + B$ with $B$ effective and $A$ ample.
By definition, for any pair $x,y\in X$ of general points, possibly after switching $x,y$,
there exist $\epsilon\in(0,1)$ and a $\Q$-divisor $0\le \Delta \sim_{\Q} (1- \epsilon) D$
such that $(X,\Delta)$ is not klt at $y$ but it is lc at $x$ and $\{x\}$ is a non-klt centre.
Now if $\epsilon' \in(0,\epsilon)$ is rational, then we can find
$$
0\le \Delta'\sim_\Q \Delta+(\epsilon-\epsilon')B+(\epsilon-\epsilon')A+(1-\epsilon')G\sim_\Q (1-\epsilon')(D+G)
$$
so that $(X,\Delta')$ satisfies the same above properties as $(X,\Delta)$ at $x,y$.
So $D+G$ is potentially birational. To get the last claim, just apply [\ref{HMX}, Lemma 2.3.4 (1)].\\
\end{proof}

\begin{lem}\label{l-peffth-of-B-in-K+B+nM}
Let $\Lambda$ be a DCC set of nonnegative real numbers, and $d,r$ natural numbers.
Then there is a real number $t \in (0,1)$
depending only on $\Lambda, d,r$ such that if:

$\bullet$ $(X,B)$ is projective lc of dimension $d$,

$\bullet$ the coefficients of $B$ are in $\Lambda$,

$\bullet$ $rM$ is a nef Cartier divisor, and

$\bullet$ $K_X+B+M$ is a big divisor,\\\\
then $K_X+tB+nM$ is a big divisor for any natural number $n>2rd$.\\
\end{lem}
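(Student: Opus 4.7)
The plan is to argue by contradiction, combining an MMP analysis that exploits the bound $n>2rd$ with the Global ACC of [\ref{HMX2}, Theorem 1.5].

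Suppose the lemma fails. Then there exists a sequence $(X_i,B_i,M_i,n_i)$ with $n_i>2rd$ satisfying (i)--(iv), but such that the bigness thresholds
\[
t_i:=\inf\{t\in[0,1] \mid K_{X_i}+tB_i+n_iM_i \text{ is big}\}
\]
tend to $1$; passing to a subsequence, $\{t_i\}$ is strictly increasing to $1$. Note $t_i<1$, since $K_{X_i}+B_i+n_iM_i$ is big (being the sum of the big divisor $K_{X_i}+B_i+M_i$ and the nef divisor $(n_i-1)M_i$) and bigness is an open condition. For each $i$, I pick $s_i$ slightly below $t_i$ so that $K_{X_i}+s_iB_i+n_iM_i$ is not pseudo-effective (the subcase where it is pseudo-effective but not big requires a parallel argument, obtained for instance by perturbing with a small ample to reduce to the non-pseudo-effective situation), and run an MMP with scaling of an ample divisor; it terminates at a Mori fibre space $Y_i\to T_i$.

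Every contracted extremal ray $R$ is $(K_{X_i}+s_iB_i+n_iM_i)$-negative. Since $n_iM_i$ is nef, $R$ is also $(K_{X_i}+s_iB_i)$-negative, and the length-of-extremal-rays bound for the lc pair $(X_i,s_iB_i)$ yields a curve $C_R$ spanning $R$ with $(K_{X_i}+s_iB_i)\cdot C_R\ge -2d$. Hence $n_iM_i\cdot C_R<2d$, so $M_i\cdot C_R<1/r$, and since $rM_i$ is nef and Cartier, $rM_i\cdot C_R$ is a nonnegative integer less than $1$, hence $0$. Thus $M\cdot R=0$ on every contracted ray, so $rM$ remains nef Cartier along the MMP, and on $Y_i$ the nef part $M_{Y_i}$ is numerically trivial on the general fibre $F_i$ of $Y_i\to T_i$. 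Letting $s\nearrow t_i$, we deduce $K_{Y_i}+t_iB_{Y_i}+n_iM_{Y_i}\equiv 0$ on fibres, and restricting to $F_i$ together with $M_{Y_i}|_{F_i}\equiv 0$ gives, by adjunction on a general fibre,
\[
K_{F_i}+t_iB_{Y_i}|_{F_i}\equiv 0.
\]

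The pair $(F_i,t_iB_{Y_i}|_{F_i})$ is lc of dimension at most $d$, and the set $\{t_i\}_i\cdot\Lambda$ is DCC: any infinite strictly decreasing sequence $t_{i_k}b_{j_k}$ would force $\{i_k\}$ strictly increasing (since $\N$ admits no strictly decreasing infinite sequence, and if $\{i_k\}$ were eventually constant then $\{b_{j_k}\}$ would itself be strictly decreasing in the DCC set $\Lambda$), and then with $t_{i_k}$ strictly increasing the ratio $b_{j_k}=(t_{i_k}b_{j_k})/t_{i_k}$ would still be strictly decreasing in $\Lambda$, again contradicting DCC. Applying [\ref{HMX2}, Theorem 1.5] to the family $(F_i,t_iB_{Y_i}|_{F_i})$ with coefficient set $\{t_i\}_i\cdot\Lambda$, the coefficients must lie in a fixed finite subset $\Lambda^0\subseteq\{t_i\}_i\cdot\Lambda$. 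As long as some component of $B_{Y_i}|_{F_i}$ survives with a fixed coefficient $b>0$ in $\Lambda$ for infinitely many $i$, the values $t_ib$ produce infinitely many distinct elements of $\Lambda^0$, a contradiction.

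The main obstacle is the degenerate case in which $B_{Y_i}|_{F_i}\equiv 0$ for all large $i$, where the Global ACC becomes vacuous. To rule it out I bring in the hypothesis that $K_{X_i}+B_i+M_i$ is big, so its birational transform $K_{Y_i}+B_{Y_i}+M_{Y_i}$ remains big on $Y_i$; combined with $B_{Y_i}|_{F_i}\equiv 0$, $M_{Y_i}|_{F_i}\equiv 0$ and the resulting $K_{F_i}\equiv 0$, we get $(K_{Y_i}+B_{Y_i}+M_{Y_i})|_{F_i}\equiv 0$. When $T_i$ is a point this contradicts bigness immediately; when $\dim T_i>0$ the same contradiction comes from the fact that the restriction of a big divisor to the moving family of general fibres of $Y_i\to T_i$ controls relative bigness and cannot vanish numerically. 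This case analysis is the delicate point of the argument.
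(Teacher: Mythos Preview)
Your overall strategy---reduce to a Mori fibre space, use $n>2rd$ to kill $M$ on the fibre, and invoke [\ref{HMX2}, Theorem 1.5]---matches the paper's. But there is a genuine gap at the step ``Letting $s\nearrow t_i$, we deduce $K_{Y_i}+t_iB_{Y_i}+n_iM_{Y_i}\equiv 0$ on fibres.'' The Mori fibre space $Y_i\to T_i$ was produced by an MMP run at the \emph{fixed} value $s_i$, so you cannot afterwards slide $s$ and keep $Y_i$. What you actually know on $F_i$ is that $(K_{Y_i}+s_iB_{Y_i})|_{F_i}$ is anti-ample, not that $(K_{Y_i}+t_iB_{Y_i})|_{F_i}\equiv 0$. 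Even if you tried to extract the unique $c_i\in(s_i,1)$ with $(K_{F_i}+c_iB_{Y_i}|_{F_i})\equiv 0$, there is no reason $c_i=t_i$, and worse, you have not shown $(Y_i, c_iB_{Y_i})$ is lc: the MMP only preserves the lc property of the pair with boundary $s_iB_i$, not with the larger boundary $c_iB_i$ or $t_iB_i$. So the input to Global ACC is not available.

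The paper avoids this by working with the \emph{pseudo-effective} threshold $b_i$ rather than the bigness threshold. At $b_i$ the divisor $K_{X_i}+b_iB_i+n_iM_i$ is pseudo-effective, so (after reducing to klt) one runs an LMMP on it to a minimal model $X_i'$ where it is semi-ample, defining $X_i'\to T_i'$; the bigness of $K_{X_i}+B_i+n_iM_i$ forces the general fibre to be positive-dimensional with $B_i'$ big on it. A second LMMP over $T_i'$ on $K_{X_i'}+n_iM_i'$ with scaling of $b_iB_i'$ reaches a Mori fibre space where $K+b_iB'+n_iM'\equiv 0$ on fibres holds \emph{exactly} and the lc property with boundary $b_iB'$ is preserved throughout. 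This is precisely the ``parallel argument'' you waved away for the pseudo-effective-but-not-big subcase; in fact it is not parallel but central, and the non-pseudo-effective subcase you emphasize is the one that does not directly work. Finally, you should also first pass to a log resolution and perturb to klt (as the paper does) so that the relevant LMMP's are known to terminate.
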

\begin{proof}
Since $M$ is nef, it is enough to treat the case $n=2rd+1$.
We can assume $1\in \Lambda$. Let $(X,B)$ and $M$ be as in the statement of the lemma.
Let $f\colon W\to X$ be a log resolution and let $B_W$ be the sum of the birational transform of
$B$ and the reduced exceptional divisor of $f$, and let $M_W$ be the pullback of $M$.
Then we can replace $(X,B)$ with $(W,B_W)$ and replace $M_W$ with $M$ hence it is enough
to only consider log smooth pairs.

We want to argue that, after extending $\Lambda$ if necessary, it is enough to only consider
the case when $(X,B)$ is klt. If the lemma does not hold, then there is a sequence
$(X_i,B_i)$, $M_i$ of log smooth lc pairs and nef $\Q$-divisors satisfying the
assumptions of the lemma but such that the pseudo-effective thresholds
$$
b_i=\min\{a\ge 0 \mid K_{X_i}+aB_i+nM_i ~~\mbox{is pseudo-effective}\}
$$
is a strictly increasing sequence of numbers approaching $1$. Now by extending $\Lambda$ and decreasing
the coefficients in $B_i$ which are equal to $1$,
we can assume that $(X_i,B_i)$ are klt. To get a contradiction it is obviously
enough to only consider this sequence hence we only need to consider the klt case.

Now let $(X,B)$ and $M$ be as in the statement of the lemma where we assume $(X,B)$ is log smooth klt. 
Let $b$ be the pseudo-effective
threshold as defined above. We may assume $b>0$.  By Lemma \ref{l-LMMP}(2) below, we can run
an LMMP on $K_X+bB+nM$ which ends with a minimal model $X'$ on which $K_{X'}+bB'+nM'$ is semi-ample defining
a contraction $X'\to T'$. Since $b>0$, a general fibre of $X'\to T'$ is positive-dimensional and  
the restriction of $B'$ to it is big,
by the bigness of $K_{X'}+B' + nM'$ and
the definition of $b$.
So relying on Lemma \ref{l-LMMP}(2) once more, we can also run an LMMP$/T'$ on $K_{X'}+nM'$ with scaling of $bB'$
which terminates with a Mori fibre space.
Denote the end result again by $X'$ and the Mori fibre space structure by $X'\to S'$.
By Lemma \ref{l-LMMP}(3), both LMMP's are $M$-trivial and hence the Cartier and nefness of $rM$ is
preserved in the process.
Now since $K_{X'}+bB'+nM'\equiv 0/S'$ and since $n>2rd$,
$M'\equiv 0/S'$ by boundedness of length of extremal rays [\ref{Ka91}].
In particular, if $F'$ is a general fibre of $X'\to S'$,
then
$$
K_{F'}+\Delta':=(K_{X'}+bB')|_{F'} \equiv (K_{X'}+bB' + nM')|_{F'} \sim_{\R} 0 .
$$
By construction, $(K_{X'} + B' + M')|_{F'}$ is big and $M'|_{F'}\equiv 0$, so
$B'|_{F'}$ is not zero and its coefficients belong to $\Lambda$.
Therefore, $b$ is bounded away from $1$ otherwise we get a contradiction with
the ACC property of [\ref{HMX2}, Theorem 1.5]. Thus there is $t_0\in(0,1)$ depending only
on $\Lambda, d,r$ such that $K_X+t_0B+nM$ is pseudo-effective.
Now take $t=\frac{t_0+1}{2}$.\\
\end{proof}

We should point out that although we have used (and continue to use) Lemma \ref{l-LMMP}  but
its proof does not rely on any of the results of this section.

\begin{lem}\label{l-vol-infinity}
Let $X$ be a normal projective variety, $L$ a big $\R$-divisor, and $M$ a nef $\Q$-divisor which is not
numerically trivial. Then
$\vol(L+nM)$ goes to $\infty$ as $n$ goes to $\infty$.
\end{lem}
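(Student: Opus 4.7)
The plan is to bound $\vol(L+nM)$ from below by an explicit polynomial in $n$ whose leading term tends to infinity.

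First I would apply Kodaira's lemma to the big $\R$-divisor $L$, writing $L \sim_\R A + E$ with $A$ an ample $\R$-divisor and $E \ge 0$. Since volume is monotone under the addition of an effective divisor, this gives $\vol(L+nM) \ge \vol(A+nM)$. Because $A$ is ample and $M$ is nef, $A+nM$ is an ample $\R$-divisor for every $n \ge 0$, so
\[
\vol(A+nM) \;=\; (A+nM)^{d} \;=\; \sum_{i=0}^{d}\binom{d}{i}\, n^{i}\, A^{d-i}\cdot M^{i},
\]
where $d = \dim X$ and each intersection number on the right is nonnegative by Kleiman's criterion.

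It therefore suffices to exhibit a strictly positive coefficient at some $n^{i}$ with $i \ge 1$; specifically I would show that $A^{d-1}\cdot M > 0$. This is the heart of the argument and the only place the hypothesis $M \not\equiv 0$ is used. I would invoke the Hodge index theorem: if $A^{d-1}\cdot M = 0$, then $M$ lies in the $A$-primitive hyperplane of $N^{1}(X)_{\R}$, on which the quadratic form $D \mapsto A^{d-2}\cdot D^{2}$ is negative definite. Since $A^{d-2}\cdot M^{2} \ge 0$ by Kleiman (both $A$ and $M$ are nef), this forces $M \equiv 0$, contradicting the hypothesis. When $X$ is singular one pulls back to a resolution and runs the same argument there, using that intersection numbers are preserved. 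This Hodge-index step is the main obstacle; equivalently, one is using that a nef divisor with $M \not\equiv 0$ has numerical dimension at least one. Once $A^{d-1}\cdot M > 0$ is in hand, $(A+nM)^{d}$ is bounded below by the linear term $dn\,(A^{d-1}\cdot M)$, so $\vol(L+nM) \to \infty$ as $n \to \infty$.
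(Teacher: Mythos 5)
Your proof is correct and follows essentially the same route as the paper: write $L \sim_\R A + E$ with $A$ ample and $E\ge 0$, bound $\vol(L+nM)$ below by $(A+nM)^d$, and isolate one positive mixed intersection term that grows with $n$. The only difference is that the paper keeps the term $n^{\nu}A^{d-\nu}\cdot M^{\nu}$ where $\nu>0$ is the numerical dimension of $M$, whereas you keep the linear term and justify $A^{d-1}\cdot M>0$ via the Hodge index theorem; these amount to the same standard positivity fact (namely that $M\not\equiv 0$ forces $\nu\ge 1$, equivalently $A^{d-1}\cdot M>0$).
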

\begin{proof}
We may write $L\sim_\R A+D$
where $A$ is ample $\Q$-Cartier and $D\ge 0$. Thus
$$
\vol(L+nM)\ge \vol(A+nM)\ge n^\nu A^{d-\nu}\cdot M^\nu
$$
where $d=\dim X$ and $\nu$ is the numerical dimension of $M$. Since $A$ is ample and $\nu>0$,
$A^{d-\nu}\cdot M^\nu>0$.
Hence the above volume goes to infinity as $n$ goes to infinity.\\
\end{proof}

\begin{prop}\label{p-bir-nM-n-large}
Let $\Lambda \subset [0,1]$ be a DCC set of nonnegative real numbers and let $d,r$ be natural numbers.
Then there exists a natural number $m$ depending only on $\Lambda, d,r$ such that if:

$\bullet$ $n$ is a natural number satisfying $n > 2rd$ and $r | n$,

$\bullet$ $(X,B)$ is projective lc of dimension $d$,

$\bullet$ the coefficients of $B$ are in $\Lambda$,

$\bullet$ $rM$ is a nef Cartier divisor, and

$\bullet$ $K_X+B+M$ is a big divisor,\\\\
then $|m(K_X+B+nM)|$ defines a birational map.
\end{prop}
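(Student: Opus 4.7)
The plan is to imitate the effective birationality argument of [\ref{HMX2}, Section~3], with the nef Cartier divisor $nM$ playing the role of a ``free polarization'' preserved under the inductive step. We proceed by induction on $d$; the case $d=1$ is elementary. After replacing $X$ by a log resolution and $M$ by its pullback, we may assume $(X,B)$ is log smooth. Set $H:=K_X+B+nM$, which is big since $K_X+B+M$ is big and $(n-1)M$ is nef. By Lemma~\ref{l-peffth-of-B-in-K+B+nM} there exists $t\in(0,1)$, depending only on $\Lambda,d,r$, such that $K_X+tB+nM$ is also big; this slack is what will enable tie-breaking.

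Since $\vol(mH)=m^{d}\vol(H)\to\infty$, for $m$ larger than an explicit polynomial in $d$ and any pair of general points $x,y\in X$, the volume method of [\ref{HMX2}] produces a $\Q$-divisor $\Delta\sim_{\Q}(1-\delta)mH$ with $(X,B+\Delta)$ non-klt at $y$ and with multiplicity at $x$ high enough to cut. Using the bigness of $K_X+tB+nM$ we write $(1-t)B+\varepsilon nM\sim_{\Q}A+E$ with $A$ ample and $E\ge 0$, add a small multiple of this to $\Delta$, and run standard tie-breaking to obtain a unique non-klt place of the perturbed pair whose centre $V_c\ni x$ is a general member of a covering family $V\to C$. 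We then apply Theorem~\ref{t-HMX4.2} with $G=\Delta$ and $V=V_c$: letting $W$ denote the normalization of $V_c$, we obtain an $\R$-boundary $B_W$ on $W$ with coefficients in a DCC set $\Lambda'$ depending only on $\Lambda,d$ (Remark~\ref{rem-Lambda'}), together with the pseudo-effectivity of $(K_X+B+\Delta)|_W-(K_W+B_W)$ and the inequality $K_U+B_U\ge(K_X+B)|_U$ on a log resolution $U\to W$.

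Pulling $M$ back to a suitable resolution of $W$ yields $M_W$ nef $\Q$-Cartier with $rM_W$ nef Cartier; since $V_c$ is a general member of a covering family and $H$ is big, $H|_W$ is big, and combining with the inequality on $U$ shows $K_W+B_W+nM_W$ is big. The inductive hypothesis applied to $W$ (dimension $<d$, coefficient set $\Lambda'$, same $r$) gives $m_0$, depending only on $\Lambda,d,r$, such that $|m_0(K_W+B_W+nM_W)|$ defines a birational map. Combining pseudo-effectivities, $\gamma H|_W-m_0(K_W+B_W+nM_W)$ is pseudo-effective for an explicit $\gamma$ built from $m_0$ and $(1-\delta)m$. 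Theorem~\ref{t-HMX3.5.4}, applied with this $\gamma$ and $p=\dim V_c<d$, gives potential birationality of $m'H$ for $m'=2p^{2}\gamma+1$; Lemma~\ref{l-addNef} then converts this to birationality of $|K_X+\lceil m'H\rceil|$, and absorbing $K_X$ into one further copy of the big divisor $H$ produces the desired bounded $m$ depending only on $\Lambda,d,r$.

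The main obstacle is the tie-breaking of the second paragraph: one must convert the bigness of $K_X+tB+nM$ supplied by Lemma~\ref{l-peffth-of-B-in-K+B+nM} into an ample-plus-effective decomposition sharp enough to perturb $\Delta$ while preserving both the lc condition at $x$ and the nef Cartier structure of $rM$ across any auxiliary LMMP. A secondary delicacy is arranging the covering family so that $V_c$ is truly general and $H|_W$ stays big, which is essential for the induction hypothesis to apply.
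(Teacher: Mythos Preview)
Your inductive machinery (Theorems~\ref{t-HMX4.2} and \ref{t-HMX3.5.4}, Lemma~\ref{l-addNef}, induction on $\dim W$) matches the paper's Steps~3--5, but there is a genuine gap: you never establish a uniform lower bound on $\vol(K_X+B+nM)$ depending only on $\Lambda,d,r$. When you write ``for $m$ larger than an explicit polynomial in $d$'' the volume method produces a non-klt $\Delta\sim_{\Q}(1-\delta)mH$, you are using that $\vol(mH)>(2d)^d$; but this requires $m>2d/\vol(H)^{1/d}$, which depends on $\vol(H)$, not just on $d$. Consequently the $k$ feeding into $\gamma$, and hence your final $m'$, depends on $\vol(H)$ and is not uniform. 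Lemma~\ref{l-peffth-of-B-in-K+B+nM} only gives bigness of $K_X+tB+nM$, not a volume bound, so it does not close this hole.

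The paper handles this by a bootstrap (its Steps~6--7): first prove the proposition \emph{conditionally} on $\vol(K_X+B+nM)>\beta$ for an arbitrary fixed $\beta$, obtaining $m''=m''(\Lambda,d,r,\beta)$; then use this conditional result to manufacture the missing volume bound. If no uniform $\alpha$ existed one would have a sequence $(X_i,B_i),M_i,n_i$ with $n_i\to\infty$ and volumes $\to 0$; for each $i$ choose the largest $n_i'$ with $r\mid n_i'$ and $\vol(K_{X_i}+B_i+n_i'M_i)<1$, so that $\vol(K_{X_i}+B_i+(n_i'+r)M_i)\ge 1$ and the conditional result applies with $\beta=1$. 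The resulting birational system gives a decomposition $P_i\sim H_i+G_i$ with $H_i$ base point free, and expanding $(P_i+m''(n_i'-r-1)f_i^*M_i)^d$ forces $H_i^{d-\nu}\cdot f_i^*M_i^{\nu}$ to stay bounded while $(n_i')^{\nu}\to\infty$, contradicting the Cartier bound $H_i^{d-\nu}\cdot f_i^*M_i^{\nu}\ge r^{-\nu}$ (here $\nu>0$ is the numerical dimension of $M_i$; the case $\nu=0$ is handled by [\ref{HMX2}, Theorem~1.3]). A convexity interpolation then covers all $n>2rd$. This bootstrap is the heart of the argument and is absent from your proposal.

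A smaller point: the tie-breaking difficulty you flag is avoided in the paper by passing to the ample model of $K_X+B+nM$ via Lemma~\ref{l-LMMP}(2)(3), which preserves the nef Cartier property of $rM$ since $n>2rd$; once $H$ is ample, [\ref{HMX2}, Lemma~7.1] directly supplies the covering family with a unique non-klt place, so no separate perturbation is needed.
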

\begin{proof}
\emph{Step 1.}
We prove the proposition by induction on $d$. In particular, we may assume that
the proposition holds in dimension $<d$.

Fix $\beta>0$. Pick $(X,B)$, $M$, and $n$ as in the proposition.
Assume that $\vol(K_X + B + nM) > \beta$.
We first prove the result for such $(X,B)$, $M$, and $n$.
At the end, in Steps 6 and 7, we treat the general case.

As in the proof of Lemma \ref{l-peffth-of-B-in-K+B+nM}, by extending $\Lambda$,
by taking a log resolution of $(X, B)$, and
by decreasing the coefficients of $B$, we can assume that $(X, B)$ is klt.\\

\emph{Step 2.}
By Lemma \ref{l-peffth-of-B-in-K+B+nM},
$K_X + bB + nM$ is big for some $b \in (0, 1)$ depending only on $\Lambda, d, r$.
Thus
$$\begin{aligned}
\vol(K_X + \frac{1}{2}(b+1)B + nM) =
&\vol(\frac{1}{2}(K_X + bB + nM + K_X + B + nM) \\
> &\vol(\frac{1}{2}(K_X + B + nM) > \frac{1}{2^d} \beta .
\end{aligned}$$
Replacing $b$ by $\frac{1}{2}(b+1)$ we may assume that
$$
\vol(K_X + bB + nM) > \beta':=\frac{1}{2^d} \beta .
$$
Moreover, there exists a natural number $p$ depending only on $\Lambda$ and $b$
(and hence only on $\Lambda$, $d, r$) and there exists a boundary $B'$ such that $pB'$
is an integral divisor and $b B \le B'\le B$: this follows from the fact that
we can find $p$ so that $\lambda-b\lambda>\frac{1}{p}$ for every nonzero $\lambda \in \Lambda$
which in turn implies that for each $\lambda$ we can find an integer $0\le i\le p$
such that $b\lambda \le \frac{i}{p}\le \lambda$.
By the calculation above, $\vol(K_X + B' + nM) > \beta'$. Replacing $B$ with $B'$, and
$\beta$ with $\beta'$, we can assume $\Lambda=\{i/p \mid 0\le i\le p\}$ and that $pB$ is integral.\\

\emph{Step 3.}
Applying Lemma \ref{l-LMMP}(2) below, we can replace $X$ with the lc model (=ample model) of $K_X+B+nM$ so that 
we can assume that $K_X+B+nM$ is ample keeping $rM$ nef and Cartier.
Since $\vol(K_X + B + nM) > \beta$, there is a natural number $k > 0$ depending only on $d,\beta$,
such that
$$
\vol(k(K_X + B + nM)) > (2d)^d .
$$
Applying [\ref{HMX2}, Lemma 7.1] to the log pair $(X, B)$ and the big divisor $k(K_X +B + nM)$,
we get a covering family $V \to C$ of subvarieties of $X$ such that if
$x$ and $y$ are two general points of $X$, then we may find $c \in C$ and
$$
0\le \Delta_c \sim_{\R} k(K_X + B + nM)
$$
such that $(X, B + \Delta_c)$ is not klt at $y$ but it is lc at $x$ and there is a unique non-klt place of
$(X, B + \Delta_c)$ whose centre is equal to $V_c$ which contains $x$.\\

\emph{Step 4.}
Let $H := 2k(K_X + B + nM)$.
In this step we make the necessary preparations in order to apply [\ref{HMX2}, Theorem 3.5.4] (=Theorem \ref{t-HMX3.5.4} above).
To do this we need to find a natural number $\gamma$, depending only on $\Lambda, d, r$, and find
a divisor $D$ on the
normalization $W$ of $V_c$ such that $\gamma H |_W - D$ is pseudo-effective and
$|D|$ defines a birational map.

If $\dim W = \dim V_c = 0$, then $\gamma,D$ exist trivially (and $H$ is potentially birational).
So assume that $\dim W \ge 1$.
Now applying the adjunction formula of [\ref{HMX2}, Theorem 4.2] (= Theorem \ref{t-HMX4.2} above)
to the klt pair $(X, B)$ and the
divisor $\Delta_c$, and taking into account Remark \ref{rem-Lambda'},
we can find a boundary $B_W$ on $W$ whose coefficients belong to
a DCC set $\Lambda'$ uniquely determined by $\Lambda,d$, such that the difference
$$
(*) \hskip 1pc (K_X + B + \Delta_c) |_W - (K_W + B_W)
$$
is a pseudo-effective divisor. Further, let $\psi : U \to W$
be a log resolution of $(W, B_W)$
and let $B_U$ be the sum of the strict transform of $B_W$ and the reduced exceptional divisor of $\psi$.
Then
$$
K_U + B_U \ge (K_X + B) |_U .
$$
Denote by $M_U := M|_U$, the pullback of $M$ to $U$,
by the composition
$$U \to W \to V_c \hookrightarrow X$$
which is birational onto its image.
Then
$$
K_U + B_U + M_U \ge (K_X + B + M) |_U .
$$
Hence $K_U + B_U + M_U$  is big because $(K_X + B + M) |_U$ is big
 being the pullback of the big divisor
$K_X + B + M$ to a smooth model of the general subvariety $V_c$.

Since the coefficients of $B_U$ belong to the DCC set $\Lambda'$, since $rM_U$ is a nef Cartier divisor,
and since $n>2rd$, the induction hypothesis implies that
$|m(K_U + B_U + nM_U)|$ defines a birational map for
some $m > 0$ depending only on $\Lambda'$ (and hence on $\Lambda$) and $d, r$.
Thus $|m(K_W + B_W + nM_W)|$ also defines a birational map since it contains the direct image of
$|m(K_U + B_U + nM_U)|$ where $M_W$ denotes the pullback of $M$ to $W$.

Note that the difference
$$\begin{aligned}
&(K_X + B + nM + \Delta_c) |_W - (K_W + B_W + nM_W) \\
\sim_{\R}
&(k+1)(K_X + B + nM)|_W - (K_W + B_W + nM_W)
\end{aligned}$$
is a pseudo-effective divisor by $(*)$ above.
Now  let
$D := m(K_W + B_W + nM_W)$ and let $\gamma$ be the smallest natural number satisfying
$\gamma \ge m(k+1)/2k$. Then $\gamma H |_W - D$ is a pseudo-effective divisor and
$|D|$ defines a birational map as required.\\

\emph{Step 5.}
By Step 4 and Theorem \ref{t-HMX3.5.4},
$$
m'H=2m'k(K_X + B + nM)
$$
is potentially birational for some
$$
m'\le 2(d-1)^2 \gamma + 1 .
$$
Thus by Lemma \ref{l-addNef},
$$
2m'kp(K_X + B + nM)+nM
$$
is also potentially birational and
$$
|K_X + \lceil 2m'kp(K_X + B + n M) + n M \rceil|
$$
defines a birational map where $p$ is as in Step 2 (recall that $pB$ is an integral divisor).
Since
$$
K_X + \lceil 2m'kp(K_X + B + n M) + n M \rceil
\le \lfloor (2m'kp+1)(K_X + B + nM) \rfloor
$$
the linear system
$$
|\lfloor (2m'kp+1)(K_X + B + nM) \rfloor|
$$
also defines a birational map. Now the number $m'':=2m'kp+1$ only depends on the data $\Lambda, d,r,\beta$.\\

\emph{Step 6.}
Now we go back to Step 1.
We will show that there exist a natural number $q$ and a real number $\alpha>0$ depending only on
$\Lambda, d,r$, such that if $(X,B)$, $M$, $n$ are as in the statement of the proposition
and if $n\ge q$, then $\vol(K_X + B + nM)>\alpha$.
 If this is not true, then we can
find a sequence $(X_i,B_i)$, $M_i$, $n_i$ satisfying the assumptions of the proposition
such that the $n_i$ form a strictly increasing sequence approaching $\infty$ and  the
$\vol(K_{X_i} + B_i + n_iM_i)$ approach $0$.
By replacing $X_i$ with a minimal model of $K_{X_i} + B_i + n_iM_i$,
we may assume that $K_{X_i} + B_i + n_iM_i$ is nef.
We can also assume that $\nu$, the numerical dimension of $M_i$, is independent of $i$.
We may assume $\nu>0$ otherwise we can get a contradiction using [\ref{HMX2}, Theorem 1.3].

By Lemma \ref{l-vol-infinity}, for each $i$, there is $n_i'$ the largest natural number divisible by $r$ such
that $\vol(K_{X_i} + B_i + n_i'M_i)<1$.
We show that the volume $\vol(K_{X_i} + B_i + (2n_i'-1)M_i)$ is bounded from above.
This follows from
$$
2^d> \vol(2(K_{X_i}+B_i+n_i'M_i))
$$
$$
=\vol(K_{X_i}+B_i+(2n_i'-1)M_i+K_{X_i}+B_i+M_i)
$$
$$
>\vol(K_{X_i}+B_i+(2n_i'-1)M_i)
$$
where we use the assumption that $K_{X_i}+B_i+M_i$ is big.

On the other hand, since
$$
\vol((K_{X_i} + B_i + (n_i'+r)M_i)) \ge 1,
$$
by Steps 2-5 above,
we may assume that there is an $m''$ depending only on $\Lambda, d,r$ such that
$$
|m''(K_{X_i} + B_i + (n_i'+r)M_i)|
$$
defines a birational map for every $i$. In particular,  there exist resolutions $f_i\colon Y_i\to X_i$
such that 
$$
P_i:=f_i^*m''(K_{X_i} + B_i + (n_i'+r)M_i)\sim H_i+G_i
$$
where $H_i$ is big and base point free and $G_i$ is effective.
So we can calculate
$$
2^d (m'')^d > \vol(m''(K_{X_i}+B_i+(2n_i'-1)M_i))
$$
$$
=(P_i+m''(n_i'-r-1)f_i^*M_i)^d\ge (m''(n_i'-r-1))^\nu H_i^{d-\nu}\cdot f_i^*M_i^\nu
$$
which gives a contradiction as $\lim (n_i'-r-1)=\infty$ and $H_i^{d-\nu}\cdot f_i^*M_i^\nu\ge \frac{1}{r^\nu}$.\\

\emph{Step 7.} Let $q,\alpha$ be as in Step 6. In this step we show that there is $\beta>0$ depending
only on $\Lambda, d,r$ such that $\vol(K_X + B + nM)>\beta$ for any $(X,B)$, $M$, $n$ as in the
statement of the proposition. We may assume $q>n$ otherwise we can use Step 6.
Let $s=\frac{n-1}{q-1}$. Then
$$
\vol(K_X + B + nM)=\vol((1-s)(K_X + B + M)+s(K_X + B + qM))
$$
$$
\ge s^d\vol(K_X + B + qM)>s^d\alpha\ge \frac{\alpha}{(q-1)^d}=:\beta .
$$
This completes the proof of the proposition.\\
\end{proof}

%
%
%

\section{{Generalized polarized pairs}}

In this section, we define generalized lc and klt singularities, discuss some 
of their basic properties, and then define generalized lc thresholds for generalized polarized pairs.
Next we consider running the log minimal model program for these pairs,
and use it to extract divisors with generalized log discrepancy $<1$.
Then we define generalized adjunction and discuss DCC and ACC properties of coefficients 
in the boundary and nef parts of generalized polarized pairs under this adjunction.

\par \vskip 1pc

{\textbf{\sffamily{Generalized singularities.}}}
We already defined generalized polarized pairs in the introduction.
Now we define their singularities.

\begin{defn}\label{d-g-sing}
Let $(X',B'+M')$ be a generalized polarized pair as in \ref{d-g-pol-pair} which comes with the data
 $X \overset{f}\to X'\to Z$ and $M$.
Let $E$ be a prime divisor on some birational model of $X'$.
We define the \emph{generalized log discrepancy} of $E$ with respect to the above generalized polarized pair
as follows. After replacing $X$, we may assume $E$ is a prime divisor on $X$.
We can write
$$
K_X+B+M=f^*(K_{X'}+B'+M')
$$
for some $\R$-divisor $B$.
The generalized log discrepancy of $E$ is defined to be
$1-b$ where $b$ is the coefficient of $E$ in $B$.

We say that $(X',B'+M')$ is \emph{generalized lc} (resp. \emph{generalized klt}) if the generalized
log discrepancy of any
prime divisor is $\ge 0$ (resp. $>0$).
If $f$ is a log resolution of $(X',B')$, then generalized lc (resp. generalized klt) is equivalent
to the coefficients of $B$ being $\le 1$ (resp. $<1$). If the generalized log discrepancy of $E$
is $\le 0$, we call the image of $E$ in $X'$ a \emph{generalized non-klt centre}. If  $(X',B'+M')$ is {generalized lc},
a non-klt centre is also referred to as a \emph{generalized lc centre}.
\end{defn}

\begin{rem}\label{r-g-sing}
We use the notation of \ref{d-g-sing}.

(1)
Note that $Z$ does not play any role in the definition of singularities.
That is because singularities are local in nature over $X'$, so one can simply
 assume $X'\to Z$ is the identity map. The same applies to generalized
lc thresholds defined below (\ref{d-g-lct}) and in general to notions and 
statements that are local.

(2)
Assume that  $(X',B'+M')$ is generalized klt. Let $D'$ be an effective $\R$-Cartier divisor.
Then from the definitions we can easily see that $(X',B'+\epsilon D'+ M')$ is generalized
klt with boundary part $B'+\epsilon D'$ and nef part $M$, for any small $\epsilon >0$.

Now assume that $D'$ is ample$/Z$. Then for any $a>0$ we can find a boundary
$$
\Delta'\sim_\R B'+a D'+ M'/Z
$$
such that $(X',\Delta')$ is klt.

(3)
Assume that $K_{X'}+B'$ is $\R$-Cartier and write $K_X+\tilde{B}=f^*(K_{X'}+B')$
and $f^*M'=M+E$. By the negativity lemma [\ref{Shokurov-log-flips}, Lemma 1.1],
$E\ge 0$. Thus $B=\tilde B+E\ge \tilde B$. Therefore, if $(X',B'+M')$ is generalized
lc (resp. generalized klt), then $(X',B')$ is lc (resp. klt).

(4)
Assume that $M\sim_\R 0/X'$. Then $(X',B'+M')$ is generalized lc (resp. generalized klt) iff $(X',B')$ is
generalized lc (resp. generalized klt). Indeed in this case $M=f^*M'$ hence
$K_X+B=f^*(K_{X'}+B')$ which implies the claim. In this situation $M'$ does not
contribute to the singularities even if its coefficients are large. In contrast, the larger
the coefficients of $B$, the worse the singularities.

(5)
In general, $M$ does contribute to singularities. For example, assume $X'=\PP^2$ and that $f$ is
the blowup of a point $x'$. Let $E$ be the exceptional divisor, $L'$ a line passing through
$x'$ and $L$ the birational transform of $L'$.

If $B'=0$ and $M=2L$, then
we can calculate $B=E$ hence $(X',B'+M')$ is generalized lc but not generalized klt.
However, if $B'=L'$ and $M=2L$, then
$(X',B'+M')$ is not generalized lc because in this case $B=L+2E$.

(6)
Assume we are given a contraction  $X'\to Y/Z$. We may assume
$f$ is a log resolution of $(X',B')$. Let $F$ be a general fibre of
$X\to Y$, $F'$ the corresponding fibre of $X'\to Y$, and $g\colon F\to F'$ the induced morphism.
Let
$$
B_F=B|_F, \,\, M_F=M|_F, \,\, B_{F'}=g_*B_F, \,\, M_{F'}=g_*M_F.
$$
Then
$(F',B_{F'}+M_{F'})$ is a generalized polarized pair with the data $F\to F'\to Z$ and $M_F$.
Moreover,
$$
K_{F'}+B_{F'}+M_{F'}=(K_{X'}+B'+M')|_{F'} .
$$
In addition, $B_{F'}=B'|_{F'}$ and $M_{F'}=M'|_{F'}$: note that
since $F'$ is a general fibre, $B'$ and $M'$ are $\R$-Cartier along any codimension
one point of $F'$ hence we can define these restrictions.

(7)
Let $\phi\colon X''\to X'$ be a birational contraction from a normal variety.
We can assume $X\bir X''$ is a morphism. Let $B'',M''$ be the pushdowns of $B,M$.
Then
$$
K_{X''}+B''+M''=\phi^*(K_{X'}+B'+M').
$$
Now assume that $B''$ is a boundary. Then we can naturally consider
$(X'',B''+M'')$ as a generalized polarized pair with boundary part $B''$
and nef part $M$. One may think of $({X''},B''+M'')$ as a crepant model of $({X'},B'+M')$.\\
\end{rem}

\begin{defn}\label{d-g-lct}
Let $(X',B'+M')$ be a generalized polarized pair as in \ref{d-g-pol-pair} which
comes with the data $X \overset{f}\to X'\to Z$ and $M$.
Assume that $D'$ on $X'$ is an effective $\R$-divisor and that $N$ on $X$ is an $\R$-divisor which is
nef$/Z$ and that $D'+N'$ is $\R$-Cartier.
The \emph{generalized lc threshold} of $D'+N'$
with respect to $(X',B'+M')$ (more precisely, with respect to the above data)
is defined as
$$
\sup \{s \mid \mbox{$(X',B'+sD'+M'+sN')$ is generalized lc}\}
$$
where the pair in the definition has boundary part $B'+sD'$ and nef part $M+sN$.

By the negativity lemma, $G:=f^*(D'+N')-N\ge 0$. Thus we can write
$$
K_X+B+M=f^*(K_{X'}+B'+M')
$$
and
$$
K_X+B+sG+M+sN=f^*(K_{X'}+B'+sD'+M'+sN') .
$$
In particular, if $(X',B'+M')$ is generalized lc, then
the just defined generalized lc threshold is nonnegative. However, the threshold might be $+\infty$:
this happens when $D'=0$ and $N\sim_\R 0/X'$.
\end{defn}

As pointed earlier, the generalized lc threshold is local over $X'$, so we can usually
assume $X'\to Z$ is the identity map.
When $M=N=0$, we recover the usual lc threshold of $D'$ with respect to $(X',B')$.\\

{\textbf{\sffamily{LMMP for generalized polarized pairs.}}}
Let $(X', B'+M')$ be a $\Q$-factorial generalized lc polarized pair with data $X \overset{f}\to X'\to Z$ and $M$.
One can ask whether one can run an LMMP$/Z$ on $K_{X'}+B'+M'$ and whether it terminates.
We cannot answer this question in such generality but we will put some extra assumptions 
under which the answer would be yes.

Assume that $K_{X'}+B'+M'+A'$ is nef$/Z$
for some $\R$-Cartier divisor $A'\ge 0$ which is big$/Z$. Moreover, assume

\par \vskip 1pc
$(*)~~~$ for any $s\in (0,1)$ there is  a boundary $\Delta'\sim_\R B'+sA'+M'/Z$
such that $(X',\Delta'+(1-s)A')$ is klt.

\par \vskip 1pc
Condition $(*)$ is
automatically satisfied if $A'$ is general ample$/Z$ and either

\par \vskip 1pc
\begin{itemize}
\item[(i)]
$(X',B'+M')$ is generalized klt, or

\item[(ii)]
$(X',B'+M')$ is generalized lc and $(X',0)$ is klt.

\end{itemize}

\par \vskip 1pc
We will show that we can run the LMMP$/Z$ on $K_{X'}+B'+M'$ with scaling of $A'$
(However, we do not know  whether it terminates).
Let
$$
\lambda=\min \{t\ge 0\mid K_{X'}+B'+M'+tA'~~\mbox{is nef$/Z$}\} .
$$
We may assume $\lambda>0$. Replacing $A'$ with $\lambda A'$ we may assume $\lambda=1$.
By assumption we can find a
number $0<s<1$ and a boundary $\Delta'\sim_\R B'+sA'+M'/Z$
such that $(X',\Delta'+(1-s)A')$ is klt. Now by [\ref{Bi-2}, Lemma 3.1], there is an
extremal ray $R'/Z$ such that $(K_{X'}+\Delta')\cdot R'<0$ and
$$
(K_{X'}+\Delta'+(1-s)A')\cdot R'=0 .
$$
In particular, $(K_{X'}+B'+M')\cdot R'<0$ and
$$
(K_{X'}+B'+M'+ A')\cdot R'=0 .
$$
Moreover, $R'$ can be contracted and its flip exists if it is of flipping type.
If $R'$ defines a Mori fibre space we stop. Otherwise let $X'\bir X''$ be the
divisorial contraction or the flip of $R'$.

Replacing $X$ we may assume $X\bir X''$ is a morphism. Then $(X'',B''+M'')$
is naturally a generalized lc polarized pair with boundary part $B''$ and nef part $M$.
Moreover, $K_{X''}+B''+M''+A''$ is nef$/Z$ and $(*)$ is preserved.
Repeating the process gives the LMMP.\\

Now we show the LMMP terminates under suitable assumptions.

\begin{lem}\label{l-LMMP}
Let $(X', B'+M')$ be a $\Q$-factorial generalized lc polarized pair of dimension $d$ with data
 $X \overset{f}\to X'\to Z$ and $M$. Assume that $(X', B'+M')$  satisfies $({\rm i})$ or $({\rm ii})$ above.
Run an LMMP$/Z$ on $K_{X'}+B'+M'$ with scaling of some
general ample$/Z$ $\, \R$-Cartier divisor $A'\ge 0$. Then the following hold:

\begin{itemize}
\item[(1)]
Assume that $K_{X'}+B'+M'$ is not pseudo-effective$/Z$. Then the LMMP terminates with a
Mori fibre space.

\item[(2)]
Assume that

$\bullet$ $K_{X'}+B'+M'$ is pseudo-effective$/Z$,

$\bullet$ $(X', B'+M')$ is generalized klt, and that

$\bullet$ $K_{X'}+(1+\alpha)B'+(1+\beta)M'$ is $\R$-Cartier and big$/Z$
for some $\alpha,\beta\ge 0$.

Then the LMMP terminates with a minimal model $X''$ and $K_{X''}+B''+M''$ is semi-ample$/Z$,
hence it defines a contraction $\phi\colon X''\to T''/Z$. If moreover
a general fibre of $\phi$ is positive-dimensional and if the restriction of $B''$ to it
is nonzero,
then we can
run the LMMP$/T''$ on $K_{X''}+M''$ with scaling of $B''$ which terminates with a Mori fibre space of 
$K_{X''}+M''$ over both $T''$ and $Z$.

\item[(3)]
Assume $X\to X'$ is the identity morphism and that $M=\sum \mu_j M_j$ where $\mu_j\ge 0$ and
$M_j$ are Cartier nef$/Z$ divisors. Pick $j$ and assume $\mu_j>2d$. Then the above LMMP's are $M_j'$-trivial.
In particular, the LMMP's preserve the Cartier and the nefness$/Z$ of $M_j'$. Moreover, under the assumptions of
$({\rm 2})$ and assuming $\phi$ is birational,  $M_j''\equiv 0/T''$ and $M_j''$ is the pullback of some Cartier divisor on $T''$.
\end{itemize}
\end{lem}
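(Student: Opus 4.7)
The plan is to reduce each of the three parts to standard termination theorems for the classical minimal model program by passing through the klt approximations guaranteed by condition $(*)$.

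For part $(1)$, I fix a small rational $s>0$ and use $(*)$ to choose $\Delta'\sim_\R B'+sA'+M'/Z$ so that $(X',\Delta'+(1-s)A')$ is klt. Then $K_{X'}+\Delta'+(1-s)A'\sim_\R K_{X'}+B'+M'+A'$ is nef$/Z$, and $K_{X'}+\Delta'$ differs numerically from $K_{X'}+B'+M'$ by $sA'$, so each step of the LMMP$/Z$ on $K_{X'}+B'+M'$ with scaling of $A'$ coincides with a step of the LMMP$/Z$ on the klt pair $(X',\Delta')$ with scaling of $(1-s)A'$. When $K_{X'}+B'+M'$ is not pseudo-effective$/Z$, neither is $K_{X'}+\Delta'$ for $s$ sufficiently small, so standard termination with scaling for klt pairs [BCHM] yields a Mori fibre space. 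One checks inductively that generalized lc-ness, the $\R$-Cartier property of $K+B'+M'$, and condition $(*)$ are all preserved at each step (using Remark \ref{r-g-sing}(7)).

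For part $(2)$, the bigness of $K_{X'}+(1+\alpha)B'+(1+\beta)M'$ together with generalized klt-ness of $(X',B'+M')$ allows one to perturb and find $\Delta'\sim_\R B'+sA'+M'+\delta(\alpha B'+\beta M')/Z$ for small $s,\delta>0$ so that $(X',\Delta'+(1-s-\delta)A')$ is klt with $K_{X'}+\Delta'$ big$/Z$. Termination to a $\Q$-factorial minimal model $X''$ follows from [BCHM], and the basepoint-free theorem (after perturbing back to the original boundary) yields semi-ampleness of $K_{X''}+B''+M''$ defining $\phi\colon X''\to T''/Z$. For the second LMMP, when the general fibre $F$ of $\phi$ is positive-dimensional with $B''|_F\neq 0$, the relation $K_{X''}+B''+M''\equiv 0/T''$ forces $B''$ to be big$/T''$. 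Taking $B''$ itself as scaling divisor and considering $(X'',\varepsilon B''+M'')$ (still generalized klt for $\varepsilon>0$ small), condition $(*)$ is verified relatively over $T''$, so a second application of [BCHM] terminates the LMMP$/T''$ on $K_{X''}+M''$ with scaling of $B''$ at a Mori fibre space.

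For part $(3)$, assume $X\to X'$ is the identity and fix $j$ with $\mu_j>2d$. The generalized pair $(X',B'+(M-\mu_jM_j)')$ with boundary $B'$ and nef part $M-\mu_jM_j$ is still generalized lc, since removing a nonnegative multiple of a nef Cartier divisor from the nef part can only improve singularities. At each step, the contracted extremal ray $R$ is $(K_{X'}+B'+M')$-negative and, because $\mu_jM_j'$ is nef, also $(K_{X'}+B'+(M-\mu_jM_j)')$-negative. Via a klt approximation of the modified pair and Kawamata's length of extremal rays [\ref{Ka91}], chosen fine enough that the resulting length bound stays strictly below $\mu_j$, we find a generator $C$ of $R$ with
$$
-(K_{X'}+B'+(M-\mu_jM_j)')\cdot C \;\leq\; 2d.
$$
Since $M_j'$ is Cartier nef, $M_j'\cdot C$ is a nonnegative integer; if $M_j'\cdot C\geq 1$ then
$$
(K_{X'}+B'+M')\cdot C \;=\; (K_{X'}+B'+(M-\mu_jM_j)')\cdot C + \mu_j M_j'\cdot C \;\geq\; -2d+\mu_j \;>\; 0,
$$
contradicting the $(K_{X'}+B'+M')$-negativity of $R$. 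Thus $M_j'\cdot R=0$, so $M_j'$ is trivial on contracted curves and its Cartier and nef properties are preserved across the step. In the birational case of (2), the same argument applied to the second LMMP yields $M_j''\equiv 0/T''$, and the basepoint-free theorem applied over $T''$ shows $M_j''$ is pulled back from a Cartier divisor on $T''$. The main obstacle is to secure the $2d$ length bound for the klt approximation of the modified generalized pair in the limit as the approximation parameter tends to zero, since this is precisely what forces the hypothesis $\mu_j>2d$; granted this, the rest is a careful bookkeeping that combines [BCHM] with the singularity formalism of Section 4.
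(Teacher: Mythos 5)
Your route is the same as the paper's: reduce each LMMP to an LMMP on a klt pair $(X',\Delta')$ with $\Delta'\sim_{\R}B'+sA'+M'$ (or, in (2), a perturbation using the bigness of $K_{X'}+(1+\alpha)B'+(1+\beta)M'$), invoke [BCHM] for termination and base point freeness, and in (3) use Kawamata's length bound to force $M_j'\cdot C=0$ on a generator $C$ of each contracted ray. Your length argument in (3) is fine as written once you note that the auxiliary term $sA'$ appears in the klt approximations of \emph{both} $K_{X'}+B'+M'$ and $K_{X'}+B'+(M-\mu_jM_j)'$ and therefore cancels in the inequality $(K_{X'}+\Delta')\cdot C=(K_{X'}+\Delta'')\cdot C+\mu_jM_j'\cdot C\ge -2d+\mu_j$; so the ``limit as the approximation parameter tends to zero'' you worry about is not actually needed. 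Also, in (2) the assertion that $B''$ is big over $T''$ is not true in general and not needed: $B''|_F\neq 0$ already gives that $K_{X''}+M''\equiv -B''/T''$ is not pseudo-effective over $T''$, which is all the second LMMP requires.

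The one genuine gap is the last claim of (3). The basepoint-free theorem applied over $T''$ would at best give that $M_j''$ is semi-ample over $T''$, i.e.\ $\Q$-linearly equivalent over $T''$ to a pullback; it does not yield the stated conclusion that the nef Cartier divisor $M_j''$ \emph{is} the pullback of a Cartier divisor on $T''$. The paper gets this by factoring the birational contraction $X''\to T''$ into extremal contractions that are each negative for a klt pair: since $\Delta''$ is big$/Z$ one may assume $\Delta''\ge C''$ with $C''$ ample, and $K_{X''}+\Delta''\equiv 0/T''$ forces a $(K_{X''}+\Delta''-C'')$-negative extremal ray over $T''$ at each stage; the cone theorem [\ref{KM}, Theorem 3.7(4)] then says a Cartier divisor trivial on the contracted ray descends to a Cartier divisor, and iterating gives both $M_j''\equiv 0/T''$ and the Cartier pullback statement. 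You should replace the appeal to basepoint-freeness with this factorization.
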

\begin{proof}
(1)
Since $K_{X'}+B'+M'$ is not pseudo-effective$/Z$,
the LMMP is also an LMMP on $K_{X'}+B'+\epsilon A'+M'$ with scaling of $(1-\epsilon)A'$
for some $\epsilon>0$. Now we can find a boundary
$$
\Delta'\sim_\R B'+\epsilon A'+M'/Z
$$
such that $(X',\Delta'+(1-\epsilon)A')$ is klt. The claim then follows from [\ref{BCHM}] as 
the LMMP is an LMMP$/Z$ on $K_{X'}+\Delta'$ with scaling of $(1-\epsilon)A'$.

(2)
As
$$
K_{X'}+(1+\alpha)B'+(1+\beta)M'
$$
is big$/Z$, it is $\R$-linearly equivalent to some
$P'+G'$ over $Z$ where $P'$ is ample and $G'\ge 0$. Now if $\epsilon>0$ is small, then
$$
(1+\epsilon)(K_{X'}+B'+M')\sim_\R K_{X'}+(1-\epsilon\alpha)B'+(1-\epsilon\beta)M'+\epsilon P'+\epsilon G'
$$
$$
\sim_\R K_{X'}+\Delta'/Z
$$
for some $\Delta'$ such that $(X',\Delta')$ is klt and $\Delta'$ is big$/Z$.
The LMMP is also an LMMP$/Z$ on $K_{X'}+\Delta'$ with scaling of $(1+\epsilon) A'$ which terminates
on some model $X''$ by [\ref{BCHM}]. By the base point free theorem
for klt pairs with big boundary divisor [\ref{BCHM}, Corollary 3.9.2],
$K_{X''}+\Delta''$ is semi-ample$/Z$ hence $K_{X''}+B''+M''$ is semi-ample$/Z$ and so it defines a
contraction $\phi\colon X''\to T''$.

Now assume
a general fibre of $\phi: X'' \to T''$ is positive-dimensional and the restriction of $B''$ to it
is nonzero. In particular, this implies that $K_{X''}+M''$ is not pseudo-effective over $T''$. 
Since 
$$
\frac{1}{1+\epsilon} (K_{X''}+\Delta'')\equiv K_{X''}+B''+M''\equiv 0/T'',
$$ 
running the LMMP$/T''$ on
$K_{X''}+M''$ with scaling of $B''$ is the same as running the LMMP$/T''$
on $K_{X''}+\Delta''-\tau B''$ with scaling of $\tau B''$ for some small $\tau>0$ and this
terminates with a Mori fibre space over $T''$ and also over $Z$, by [\ref{BCHM}]. Note that, $\Delta''-\tau B''\ge 0$ by construction. 

(3)
Each step of those LMMP's is $M_j'$-trivial and preserves the Cartier and the nefness$/Z$
of $M_j'$ by boundedness of the length of extremal rays and the
cone theorem [\ref{Ka91}][\ref{KM}, Theorem 3.7 (1) and (4)]. Under the assumptions of
(2) and assuming $\phi$ is birational, to show that $M_j'$ is the pullback of some Cartier divisor on $T''$, it is enough to
show that $X''\to T''$ decomposes into a sequence of extremal contractions which are
negative with respect to certain klt pairs. We write this more precisely.

Since $\Delta'$ in the proof of (2) is big$/Z$, we can assume $\Delta''\ge C''$
for some ample $\Q$-divisor $C''$. Since $K_{X''}+\Delta''\equiv 0/T''$,
if $X''\to T''$ is not an isomorphism, then there is a
$(K_{X''}+\Delta''-C'')$-negative extremal ray which gives a contraction $X''\to X_2''/T''$.
In particular $M_j''$ is the pullback of a Cartier divisor on $X_2''$ [\ref{KM}, Theorem 3.7 (4)].
Now repeat the process with $X_2''$ and so on. Since $\phi$ is birational by assumption,
the process ends with $T''$ hence we can indeed decompose $X''\to T''$ into a sequence of extremal
contractions as required.\\
\end{proof}

We will apply the LMMP to birationally extract certain divisors for a generalized polarized pair.

\begin{lem}\label{l-extract-divs-1}
Let $(X',B'+M')$ be a generalized lc polarized pair with data $X \overset{f}\to X'\to Z$ and $M$.
Let $S_1, \dots, S_r$ be prime divisors on birational models of $X'$ which are exceptional$/X'$ and
whose generalized log discrepancies with respect to $(X',B'+M')$ are at most $1$. Then perhaps 
after replacing $f$ with a high resolution, there exist a $\Q$-factorial generalized lc polarized pair $(X'',B''+M'')$ 
with data $X \overset{g}\to X''\to Z$ and $M$, and a projective birational morphism 
$\phi\colon X''\to X'$ such that 

$\bullet$ $S_1, \dots, S_r$  appear as divisors on $X''$,

$\bullet$ each exceptional divisor of $\phi$ is one of the
$S_i$ or is a component of $\rddown{B''}$, and 

$\bullet$ $K_{X''}+B''+M''=\phi^*(K_{X'}+B'+M')$. 

In particular, the exceptional divisors
of $\phi$ are exactly the $S_i$ if $(X',B'+M')$ is generalized klt.
\end{lem}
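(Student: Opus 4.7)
The plan is to produce $X''$ as the end of a relative LMMP$/X'$ on a carefully chosen log smooth model. Replacing $f$ with a higher log resolution, I may assume $f\colon X\to X'$ is a log resolution of $(X',B')$ on which every $S_i$ appears as a prime divisor; let $E_1,\dots,E_s$ be the remaining $f$-exceptional prime divisors. Writing $K_X+B_0+M=f^*(K_{X'}+B'+M')$, set $a_i:=1-\mult_{S_i}B_0$ and $c_j:=1-\mult_{E_j}B_0$; by hypothesis $0\le a_i\le 1$, and $0\le c_j\le 1$ since $(X',B'+M')$ is generalized lc. Define
\[
B_X := f_*^{-1}B' + \sum_{i=1}^{r}(1-a_i)S_i + \sum_{j=1}^{s} E_j,
\]
a boundary making $(X,B_X+M)$ a log smooth generalized lc polarized pair with data $X\xrightarrow{\mathrm{id}} X\to X'$ and nef part $M$ (here $M$ is nef$/X'$ because $X\to Z$ factors through $X'$). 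A direct computation gives
\[
K_X+B_X+M \;=\; f^*(K_{X'}+B'+M') + \sum_{j=1}^{s} c_j E_j,
\]
so $K_X+B_X+M\sim_{\R}\sum_j c_jE_j$ over $X'$, an effective $f$-exceptional divisor whose support is exactly $\{E_j:c_j>0\}$ and contains neither the $S_i$ nor any component of $f_*^{-1}B'$.

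Next I run an LMMP$/X'$ on $K_X+B_X+M$ with scaling of a general ample$/X'$ divisor, which is permitted by the LMMP discussion preceding Lemma \ref{l-LMMP} because $(X,0)$ is klt (condition (ii) holds). Any $(K_X+B_X+M)$-negative extremal ray $R$ satisfies $\bigl(\sum c_jE_j\bigr)\cdot R<0$, hence every divisorial contraction in the LMMP contracts some $E_j$ with $c_j>0$; in particular no $S_i$ and no component of $f_*^{-1}B'$ is ever contracted, and each step stays birational$/X'$. Termination is achieved by a klt perturbation: replacing the coefficient $1$ on each $E_j$ with $c_j>0$ by $1-\epsilon$ and compensating with a tiny ample$/X'$ divisor yields a generalized klt pair to which Lemma \ref{l-LMMP}(2) applies and which drives the same LMMP. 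Let $\phi\colon X''\to X'$ be the output and $B'':=\phi_*B_X$.

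By construction $K_{X''}+B''+M''$ is nef$/X'$ and its pushforward to $X'$ equals $K_{X'}+B'+M'$, so the negativity lemma gives $K_{X''}+B''+M''=\phi^*(K_{X'}+B'+M')$. This forces $\phi_*\bigl(\sum_j c_jE_j\bigr)=0$, so every $E_j$ with $c_j>0$ is contracted, while each $S_i$ persists as a prime divisor on $X''$ with coefficient $1-a_i$ in $B''$. The remaining $\phi$-exceptional divisors are the $E_j$ with $c_j=0$; each retains its coefficient $1$ from $B_X$ and so lies in $\rddown{B''}$. In the generalized klt case every $c_j>0$, so all $E_j$ are contracted and the $\phi$-exceptional divisors are exactly the $S_i$. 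Finally, replacing $X$ by a common log resolution of $X$ and $X''$ supplies the morphism $X\to X''$ demanded by the data for the new generalized polarized pair. The main obstacle is the termination of the LMMP for the possibly non-klt pair $(X,B_X+M)$, which is overcome by the klt perturbation argument indicated above.
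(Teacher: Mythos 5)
Your construction is the same as the paper's: you form the boundary $B_X$ (the paper's $\Delta=B+E$) by keeping the crepant coefficients $1-a_i$ on the $S_i$ and raising all other $f$-exceptional divisors to coefficient $1$, so that $K_X+B_X+M\equiv\sum c_jE_j/X'$ is effective and exceptional with the $S_i$ and $f_*^{-1}B'$ outside its support, and you then run an LMMP$/X'$ on this divisor. Everything after the LMMP (no $S_i$ is contracted, $E''=0$ forces the crepant pullback formula, the leftover exceptional divisors are the $E_j$ with $c_j=0$ and lie in $\rddown{B''}$) is correct and matches the paper. A minor slip: $c_j\le 1$ is not implied by generalized lc (generalized log discrepancies of the $E_j$ other than the $S_i$ may exceed $1$), but you never use that bound.

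The genuine gap is your termination argument. You claim that lowering the coefficient of each $E_j$ with $c_j>0$ from $1$ to $1-\epsilon$ and adding a small ample$/X'$ divisor ``yields a generalized klt pair to which Lemma \ref{l-LMMP}(2) applies and which drives the same LMMP.'' Neither half is right: the perturbed boundary still has coefficient $1$ on the $E_j$ with $c_j=0$, on any $S_i$ with $a_i=0$, and on any component of $f_*^{-1}B'$ with coefficient $1$, so the pair is not generalized klt; and even if it were, changing the coefficients changes $K_X+B_X+M$ up to numerical equivalence over $X'$ (from $\sum c_jE_j$ to $\sum(c_j-\epsilon)E_j+\epsilon' A$), so the negative extremal rays, hence the LMMP, need not coincide step by step. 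Since your final appeal to the negativity lemma requires $K_{X''}+B''+M''$ to be nef$/X'$, i.e.\ requires the LMMP to actually terminate with a relative minimal model, this is a real hole. The paper does not prove termination at all: it runs the LMMP$/X'$ with scaling and observes that in the course of the LMMP one reaches a model where $K_{X''}+\Delta''+M''$ is a limit of movable$/X'$ divisors, whence $E''=0$ by the general negativity lemma of [\ref{Bi}, Lemma 3.3]; that is the argument you need to substitute for your perturbation.
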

\begin{proof}
Replacing $X$ we may assume the $S_i$ are divisors on $X$
and that $f$ is a log resolution of $(X',B')$. Let $E_1, E_2, \dots $ be the exceptional
divisors of $f$ where we can assume $E_i=S_i$ for $i\le r$.
Write
$$
K_X+B+M=f^*(K_{X'}+B'+M')
$$
and let $\Delta=B+E$ where $E:=\sum_{i>r} a_iE_i$ and $a_i$ is the generalized log discrepancy
of $E_i$ (by definition $a_i$ is equal to $1-b_i$ where $b_i$ is the coefficient of $E_i$ in $B$).
Then $\Delta$ is a boundary and
$$
K_X+\Delta+M=f^*(K_{X'}+B'+M')+E\equiv E/X'
$$
with $E\ge 0$ exceptional$/X'$. By construction, none of the $S_i$ are components of $E$.

Now  run an LMMP$/X'$ on $K_X+\Delta+M$ with scaling of some ample divisor.
This is also an LMMP$/X'$ on $E$. In the course of the LMMP we arrive at a model $X''$ on which
$K_{X''}+\Delta''+M''$ is a limit of movable$/X'$ divisors hence it is nef on the general curves$/X'$ 
of any exceptional divisor of $X''\to X'$
where $\Delta'', M''$ are the pushdowns of $\Delta,M$.
But since $E''$ is effective and exceptional$/X'$, $E''=0$ by the general negativity lemma
(cf. [\ref{Bi}, Lemma 3.3 and the proof of Theorem 3.4]).

Note that since the LMMP contracts $E$, we have $\Delta''=B''$. So we can write
$$
K_{X''}+B''+M''=\phi^*(K_{X'}+B'+M')
$$
where $\phi$ is the morphism $X''\to X'$. By construction, none of the $S_i$ is
contracted by the LMMP. Moreover, any exceptional divisor of $\phi$ is one of the
$S_i$ or is a component of $\rddown{B''}$. In particular, the exceptional divisors
of $\phi$ are exactly the $S_i$ if $(X',B'+M')$ is generalized klt. Note that 
$X''$ is $\Q$-factorial by construction.

\end{proof}

\begin{lem}\label{l-extract-div-2}
Under the notation and assumptions of Lemma \ref{l-extract-divs-1}, further assume 
 that $(X',C')$ is klt for some $C'$, and that the generalized log discrepancies of the 
 $S_i$ with respect to $(X',B'+M')$ are $<1$. Then we can construct $\phi$ so that 
in addition it satisfies: 
 
$\bullet$ its exceptional divisors are exactly $S_1,\dots, S_r$, and 

$\bullet$ if $r=1$ and $X'$ is $\Q$-factorial, then $\phi$ is an extremal contraction. 
\end{lem}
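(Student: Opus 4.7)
The idea is to reduce to the generalized klt case of Lemma \ref{l-extract-divs-1} by perturbing $B'$ using the klt divisor $C'$, and then recovering $\phi$ via the crepant pullback of the \emph{original} pair. The hypotheses that $(X',C')$ is klt and that the $S_i$ have generalized log discrepancies strictly less than $1$ are exactly what is needed for the perturbation to preserve the relevant properties; this perturbation is the only place where we need a small amount of care.

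Perhaps replacing $f$ with a higher resolution, assume $f\colon X\to X'$ is a common log resolution of $(X',B')$ and $(X',C')$ on which all the $S_i$ appear as divisors. Fix $\lambda\in(0,1)$, set $\tilde B':=(1-\lambda)B'+\lambda C'$, and write $K_X+B+M=f^*(K_{X'}+B'+M')$ and $K_X+\tilde C=f^*(K_{X'}+C')$, so that $K_X+((1-\lambda)B+\lambda\tilde C)+M=f^*(K_{X'}+\tilde B'+M')$. The coefficient of any prime divisor $E$ on $X$ in $(1-\lambda)B+\lambda\tilde C$ equals $(1-\lambda)b_E+\lambda\tilde c_E$, with $b_E\le 1$ (generalized lc) and $\tilde c_E<1$ (klt), hence is strictly less than $1$; so $(X',\tilde B'+M')$ is generalized klt. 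The assumption that the generalized log discrepancies of the $S_i$ with respect to $(X',B'+M')$ are $<1$ translates to $b_{S_i}>0$, so for $\lambda$ sufficiently small the numbers $(1-\lambda)b_{S_i}+\lambda\tilde c_{S_i}$ remain positive, i.e.\ the generalized log discrepancies of the $S_i$ with respect to $(X',\tilde B'+M')$ lie in $(0,1)$. Applying Lemma \ref{l-extract-divs-1} to the generalized klt pair $(X',\tilde B'+M')$ with divisors $S_1,\dots,S_r$, the ``in particular'' clause produces a $\Q$-factorial $X''$ and a projective birational morphism $\phi\colon X''\to X'$ whose exceptional divisors are \emph{exactly} $S_1,\dots,S_r$. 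Defining $B''$ by the crepant pullback of the \emph{original} pair, $K_{X''}+B''+M'':=\phi^*(K_{X'}+B'+M')$, the pair $(X'',B''+M'')$ is generalized lc (crepant pullback of a generalized lc pair) with data $X\to X''\to Z$ and $M$, which supplies the first bullet.

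For the second bullet, assume $r=1$ and $X'$ is $\Q$-factorial. Then $X''$ is $\Q$-factorial as well and $\phi$ has the single exceptional divisor $S_1$. For any $\R$-Cartier divisor $D$ on $X''$, $\Q$-factoriality of $X'$ makes $\phi_*D$ $\R$-Cartier, and the only exceptional divisor being $S_1$ forces $D-\phi^*\phi_*D=aS_1$ for some $a\in\R$; thus $N^1(X''/X')_\R$ is spanned by the class of $S_1$. Since $S_1$ is effective and exceptional over $X'$, the negativity lemma guarantees $[S_1]\ne 0$, so $\rho(X''/X')=1$. Combined with $\phi_*\mathcal{O}_{X''}=\mathcal{O}_{X'}$ (Stein factorization for a projective birational morphism between normal varieties), this shows that $\phi$ is an extremal contraction.
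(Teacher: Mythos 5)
Your treatment of the first bullet follows the paper's strategy exactly: perturb the boundary towards the klt divisor $C'$ to make the pair generalized klt while keeping the generalized log discrepancies of the $S_i$ inside $(0,1)$, invoke the ``in particular'' clause of Lemma \ref{l-extract-divs-1}, and then define $B''$ by crepant pullback of the original pair. There is, however, a slip in how you set up the perturbation: you keep the nef part equal to $M$, but
$$K_{X'}+\tilde B'+M'=(1-\lambda)(K_{X'}+B'+M')+\lambda(K_{X'}+C')+\lambda M',$$
and $M'$ alone is not assumed $\R$-Cartier, so this divisor need not be $\R$-Cartier; even when it is, its pullback equals $K_X+(1-\lambda)B+\lambda\tilde C+\lambda E+M$ with $E=f^*M'-M\ge 0$, not the expression you display, and the extra $\lambda E$ is exactly what your coefficient computation ignores. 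The repair is the one the paper makes: take the honest convex combination of the two log divisors, i.e.\ boundary part $\Gamma'=(1-\lambda)B'+\lambda C'$ and nef part $(1-\lambda)M$, so that $K_{X'}+\Gamma'+(1-\lambda)M'=(1-\lambda)(K_{X'}+B'+M')+\lambda(K_{X'}+C')$ is manifestly $\R$-Cartier and its pullback is the convex combination of the two pullbacks; with that change your coefficient estimates and the rest of the paragraph go through verbatim.

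For the second bullet your route is genuinely different from the paper's, and simpler. The paper runs an auxiliary LMMP over $X'$ on (a small positive multiple of) $S_1''$, notes that it is a sequence of flips followed by a single divisorial contraction onto the $\Q$-factorial $X'$, and replaces $X''$ by the model just before that last step, which is extremal by construction. You instead show that the $X''$ already produced works: since $X'$ and $X''$ are both $\Q$-factorial and $S_1''$ is the unique $\phi$-exceptional divisor, every divisor $D$ on $X''$ satisfies $D=\phi^*\phi_*D+aS_1''$ with $\phi^*\phi_*D\equiv 0/X'$, so $N^1(X''/X')$ is spanned by $[S_1'']$, which is nonzero by the negativity lemma; hence $\rho(X''/X')=1$. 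This is correct, and it delivers precisely what the paper later uses of ``extremal'' (e.g.\ $\rho(X'')=\rho(X')+1$ in Proposition \ref{p-bnd-comps}), so nothing is lost; your argument in fact shows the paper's extra LMMP step is dispensable.
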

\begin{proof}
Since $(X',C')$ is klt and $(X',B'+M')$ is generalized lc,  
$$
(X',(1-\epsilon) B'+\epsilon C'+(1-\epsilon)M')
$$ 
is generalized klt for any 
small $\epsilon>0$ with boundary part $\Gamma':=(1-\epsilon) B'+\epsilon C'$ and 
nef part $(1-\epsilon)M$. Moreover, the generalized  log discrepancies 
of the $S_i$ with respect to $(X',\Gamma'+(1-\epsilon) M')$ are still less than 
$1$. So by Lemma \ref{l-extract-divs-1}, there is $\phi\colon X''\to X'$ which extracts exactly the $S_i$. 

Now further assume that $r=1$ and that $X'$ is $\Q$-factorial. 
 By construction, we can write   
$$
K_{X''}+\Gamma''+(1-\epsilon)M''=\phi^*(K_{X'}+\Gamma'+(1-\epsilon) M')
$$ 
where $\Gamma''$ is the sum of the 
birational transform of $\Gamma'$ and $sS_1''$ for some $s\in (0,1)$. Now run an 
LMMP$/X'$ on $K_{X''}+\Gamma''+\delta S_1''+(1-\epsilon)M''$ for some small $\delta>0$ which is 
also an LMMP on $S''_1$.  
Since $X'$ is $\Q$-factorial, the last step 
of the LMMP is an extremal contraction $X'''\to X'$ which contracts $S_1'''$, the pushdown of $S_1''$, 
and $X''\bir X'''$ is an isomorphism in codimension one. 
Thus replacing $X''$ with $X'''$ we can assume $\phi$ is extremal.\\
\end{proof}

{\textbf{\sffamily{Generalized adjunction.}}}
We define an adjunction formula for generalized polarized pairs similar to the traditional one.

\begin{defn}\label{d-q-adjunction}
Let $(X',B'+M')$ be a generalized polarized pair with data $X \overset{f}\to X'\to Z$ and $M$.
Assume that $S'$ is the normalization of a component of $\rddown{B'}$ and $S$ is its birational transform on $X$.
Replacing $X$ we may assume
$f$ is a log resolution of $(X',B')$.
Write
$$
K_X+B+M=f^*(K_{X'}+B'+M')
$$
and let
$$
K_S+B_S+M_S:=(K_X+B+M)|_S
$$
where $B_S=(B-S)|_S$ and $M_S=M|_S$.
Let $g$ be the induced morphism $S\to S'$ and let $B_{S'}=g_*B_S$ and $M_{S'}=g_*M_S$.
Then we get the equality
$$
K_{S'}+B_{S'}+M_{S'}=(K_{X'}+B'+M')|_{S'}
$$
which we refer to as \emph{generalized adjunction}. It is obvious that $B_{S'}$ depends on both $B'$ and $M$.

Now assume that $(X',B'+M')$ is generalized lc.
By Remark \ref{r-adjunction} below $B_{S'}$ is a boundary divisor on $S'$, i.e. its coefficients
belong to $[0,1]$. We consider $(S',B_{S'}+M_{S'})$ as
a generalized polarized pair which is determined by the boundary part $B_{S'}$, the morphisms
$S\to S'\to Z$, and the nef part $M_S$.
It is also clear that $(S',B_{S'}+M_{S'})$ is generalized lc if $(X',B'+M')$ is so because
then
$$
K_{S}+B_{S}+M_{S}=g^*(K_{S'}+B_{S'}+M_{S'})
$$
and the coefficients of $B_S$ are at most $1$.\\
\end{defn}

\begin{rem}\label{r-adjunction}
We will argue that the $B_{S'}$ defined in \ref{d-q-adjunction} is indeed a boundary divisor on $S'$,
if $(X',B'+M')$ is generalized lc. The lc property immediately implies that
the coefficients of $B_{S'}$ do not exceed $1$, hence we only have to show that $B_{S'}\ge 0$.
Moreover, if $K_{X'}+B'$ is $\R$-Cartier, then  $B_{S'}\ge 0$ follows from the usual
divisorial adjunction: indeed in this case if $\tilde{B}_{S'}$ is the divisor given by the adjunction
$$
K_{S'}+\tilde{B}_{S'}=(K_{X'}+B')|_{S'}
$$
then it is well-known that $\tilde{B}_{S'}$ is a boundary divisor, and it is also
clear from our definitions that $\tilde{B}_{S'}\le B_{S'}$.

In practice when we apply generalized adjunction, $X'$ will be $\Q$-factorial, hence
$K_{X'}+B'$ will be $\R$-Cartier.
But for the sake of completeness we treat the general case, i.e. the non-$\R$-Cartier $K_{X'}+B'$ case.
We will reduce the statement to the situation $\dim X'=2$ in which case $K_{X'}+B'$ turns out to be
$\R$-Cartier automatically. Assume $\dim X'>2$.
Let $H'$ be a general hypersurface section and $G'$ its pullback to $S'$.
Adding $H'$ to $B'$ we may assume $H'$ is a component of $\rddown{B'}$.
Both $H'$ and $G'$ are normal varieties. Let $B_{H'}$ be given by the generalized
adjunction
$$
K_{H'}+B_{H'}+M_{H'}=(K_{X'}+B'+M')|_{H'} .
$$
Since $H'$ is a general hypersurface section, $B_{H'}$ is simply the intersection of
$B'-H'$ with $H'$, that is, each component of $B_{H'}$ is a component of the intersection of
some component of $B'-H'$ with $H'$ inheriting the same coefficient.
In particular, $B_{H'}$ is a boundary divisor and $G'$ is a component of $\rddown{B_{H'}}$.

A further generalized adjunction and induction on dimension gives
$$
K_{G'}+B_{G'}+M_{G'}=(K_{H'}+B_{H'}+M_{H'})|_{G'}
$$
where $B_{G'}$ is a boundary. But $B_{G'}$ is equal to the intersection of
$B_{S'}-G'$ with the ample divisor $G'$ on $S'$ which implies that $B_{S'}$ is a boundary divisor too.

Now we can assume $\dim X'=2$. Since
$$
K_{X}+B+M=f^*(K_{X'}+B'+M')\equiv 0/X'
$$
and since $M$ is nef$/X'$, there is a divisor $\tilde{B}\le B$
such that $K_X+\tilde{B}\equiv 0/X'$ and $f_*\tilde{B}=B'$.
Since each coefficient of $B$ is at most $1$, each coefficient of $\tilde{B}$
is also at most $1$. Therefore $(X',B')$ is \emph{numerically lc} (see [\ref{KM}, Section 4.1];
note however that [\ref{KM}] only considers $B'$ with rational coefficients but
all the definitions and results that we need make sense and hold true for real coefficients as well).
Now by [\ref{KM}, Section 4.1], $(X',B')$ is lc. In particular, $K_{X'}+B'$ is $\R$-Cartier.
So we are done by the above arguments.\\
\end{rem}

\begin{prop}\label{p-adj-dcc}
Let $d$ be a natural number and $\Lambda$ a DCC set of nonnegative real numbers.
Then there is a DCC set $\Omega$ of nonnegative real numbers
depending only on $d$ and $\Lambda$ such that if
$(X',B'+M')$ is a generalized lc polarized pair
of dimension $d$ with data $X \overset{f}\to X'\to Z$ and $M$,
and $S'$ is the normalization of a component of $\rddown{B'}$ satisfying

$\bullet$ $M=\sum \mu_jM_j$ where $M_j$ are nef$/Z$ Cartier divisors and $\mu_j\in \Lambda$,

$\bullet$ the coefficients of $B'$ belong to $\Lambda$, and

$\bullet$ $B_{S'}$ is given by the following generalized adjunction (as in \ref{d-q-adjunction})
$$
K_{S'}+B_{S'}+M_{S'}=(K_{X'}+B'+M')|_{S'}, $$
then the coefficients of $B_{S'}$ belong to $\Omega$.
\end{prop}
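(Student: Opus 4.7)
The plan is to compute the coefficient of any prime divisor $T'$ on $S'$ in $B_{S'}$ explicitly, reducing via general hyperplane sections to a surface calculation, and to identify the value as an element of a DCC generalization of the classical hyperstandard set.

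First, using Lemma \ref{l-extract-divs-1} and Remark \ref{r-g-sing}(7), I would pass to a $\Q$-factorial crepant model of $(X',B'+M')$, which leaves $S'$ (as a component of $\rddown{B'}$) and the generalized adjunction data on $S'$ unchanged; then $M'$ and each $M_j'$ become $\Q$-Cartier. Next, choose a log resolution $f\colon X\to X'$ of $(X',B')$, let $S$ be the birational transform of $S'$, and introduce $K_X+\tilde B=f^*(K_{X'}+B')$ alongside $K_X+B+M=f^*(K_{X'}+B'+M')$. Setting $F_j:=f^*M_j'-M_j$, the negativity lemma gives $F_j\ge 0$ exceptional over $X'$, and subtracting the two equations yields $B=\tilde B+\sum_j\mu_jF_j$. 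Restricting to $S$, subtracting $S$, and pushing down via $g\colon S\to S'$ then gives, by Definition \ref{d-q-adjunction},
\[
B_{S'}=\tilde B_{S'}+\sum_j\mu_j(F_j)_{S'},
\]
where $\tilde B_{S'}$ is the classical divisorial different and $(F_j)_{S'}:=g_*(F_j|_S)$. By the classical adjunction DCC result (Theorem \ref{t-HMX4.2} combined with Remark \ref{rem-Lambda'}) applied to $(X',B')$ on $S'$, the coefficients of $\tilde B_{S'}$ lie in the DCC hyperstandard set $D(\Lambda\cup\{1\})$ depending only on $\Lambda$.

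To handle each $(F_j)_{S'}$, I would cut $X'$ by $d-2$ general hypersurface sections as in Remark \ref{r-adjunction}, reducing to $\dim X'=2$, so $S'$ is a curve and $T'$ a point. A direct local computation on the minimal resolution of $X'$ at $T'$ should show that the coefficient of $T'$ in $(F_j)_{S'}$ has the form $k_j/m$ with $k_j\in\Z_{\ge 0}$, where $m\in\N$ is the same denominator that appears in the classical different formula $\tilde B_{S'}|_{T'}=(m-1+\sum_in_ib_i)/m$. Consequently every coefficient of $B_{S'}$ has the form
\[
\frac{m-1+\sum_in_ib_i+\sum_jk_j\mu_j}{m}\le 1,\qquad m,n_i,k_j\in\N,\ b_i,\mu_j\in\Lambda.
\]
The set $\Omega$ of such fractions is DCC: since $\Lambda$ is DCC, its positive elements have a positive minimum, so the constraint $\le 1$ bounds $\sum n_i+\sum k_j$ for each fixed $m$, while as $m\to\infty$ the expression tends to $1$ from below, ruling out strictly decreasing infinite sequences with unbounded $m$.

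The main obstacle is the common-denominator formula on the surface slice: one must verify that the classical different and each nef contribution $(F_j)_{S'}$ share the denominator $m$ coming from the exceptional configuration over $T'$. This requires careful tracking of intersection data on the minimal resolution of the surface singularity of $X'$ at $T'$, using that each $M_j$ pulls back to an integral nef divisor on the resolution. A secondary issue is verifying that the hyperplane-section reduction preserves the coefficient data, which follows because general sections meet components of $B'$ and each $M_j'$ transversally, so restriction preserves coefficients in $\Lambda$. Once these local and transversality points are in place, the DCC conclusion reduces to the routine argument sketched above.
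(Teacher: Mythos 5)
Your proposal follows essentially the same route as the paper's proof: the coefficient of a prime divisor $T'$ in $B_{S'}$ is written as $\tfrac{m-1+\sum n_ib_i+\sum k_j\mu_j}{m}$ with $m$ the local Cartier index at the generic point of $T'$, $b_i\in\Lambda$ the boundary coefficients, $\mu_j\in\Lambda$, and $n_i,k_j\in\Z_{\ge 0}$, and the DCC property is read off from this expression (the paper phrases this as a contradiction with a strictly decreasing sequence, which also lets it bound $m$, but that is cosmetic). The ``main obstacle'' you flag is precisely the point the paper settles, not by a minimal-resolution computation but by noting that $(X',B')$ is plt at the generic point of $T'$ (inversion of adjunction on surfaces), so by Shokurov's Proposition 3.9 and Corollary 3.10 every Weil divisor --- in particular each $M_j'$ --- becomes Cartier there after multiplying by $m$; hence $f^*(mM_j')$ is integral and the multiplicity of $T'$ in $g_*(F_j|_S)$ lies in $\tfrac{1}{m}\Z_{\ge 0}$ with the same denominator as the different. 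One caveat: your preliminary passage to a $\Q$-factorial crepant model is unnecessary and, as written, risky --- the exceptional divisors of that model enter the new boundary with coefficients $1-a(E)$ that need not lie in any fixed DCC set, so the different computed on that model is not a priori controlled; it is better to work on $X'$ itself, where after the hypersurface reduction and the plt observation all relevant divisors are already $\Q$-Cartier near the generic point of $T'$.
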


\begin{proof}
If the statement does not hold, then there exist a sequence of generalized lc polarized pairs
$(X_i',B_{i}'+M_{i}')$ and $S_i'$, with data $X_i \overset{f_i}\to X_i'\to Z_i$ and $M_i=\sum \mu_{j,i}M_{j,i}$,
 satisfying the assumptions of the proposition but such that the set of the coefficients of
all the $B_{S_i'}$ put together does not satisfy DCC. Note that since the problem 
is local, we may assume $X_i'\to Z_i$ is the identity map for each $i$. 
 We may also assume $f_i$ is a log resolution of $(X',B')$.

Let $S_i\subset X_i$ be the birational
transform of $S_i'$.
We can assume that each $B_{S_i'}$ has a component $V_i$ with
coefficient $a_i$
such that $\{a_i\}$ is a strictly decreasing sequence. Let $a=\lim a_i$.

We may assume that the $K_{X_i'}+B_i'$ are $\R$-Cartier otherwise as in Remark \ref{r-adjunction},
by taking hypesurface sections, we reduce the problem to dimension $2$ in which case
this $\R$-Cartier property holds automatically.
Let $\tilde{B}_{S_i'}$ be the divisor given by the adjunction
$$
K_{S_i'}+\tilde{B}_{S_i'}=(K_{X_i'}+B_{i}')|_{S_i'} .
$$
It is clear from our definitions that $\tilde{B}_{S_i'}\le B_{S_i'}$.
If $c_i$ is the coefficient of $V_i$ in $\tilde{B}_{S_i'}$, then we may assume $c_i\le a_i \le a+\epsilon$ for some fixed
$\epsilon>0$ so that $a+\epsilon<1$. Therefore, $(X_i',B_{i}')$ is plt near
the generic point of (the image of) $V_i$ (this follows from inversion of adjunction
on surfaces [\ref{Shokurov-log-flips}, Corollary 3.12])
and there is a natural number $l$ depending only on $a+\epsilon$ such that 
for each $i$ there is $l_i\le l$ so that 
for any Weil divisor $D_{i}'$ on $X_i'$ the divisor
$l_iD_{i}'$ is Cartier
near the (image of the) generic point of $V_i$ [\ref{Shokurov-log-flips}, Proposition 3.9].
Moreover, by [\ref{Shokurov-log-flips}, Corollary 3.10] we can write
$$
c_i=\frac{l_i-1}{l_i}+\sum b_{k,i}\frac{d_{k,i}}{l_i}
$$
for some nonnegative integers $d_{k,i}$
where $b_{k,i}$ are the coefficients of the
components of $B_{i}'$ other than (the image of) $S_i'$
passing through $V_i$.

On the other hand, shrinking $X_i'$ if necessary we can assume $M_{j,i}'$ is $\Q$-Cartier 
for each $j,i$ so we can write
$$
f_i^*M_{j,i}'=M_{j,i}+E_{j,i}
$$
where the exceptional divisor $E_{j, i}$ is effective by the negativity lemma.
Since $l_iM_{j,i}'$
is Cartier near the (image of the) generic point of $V_i$,
the multiplicity of the birational transform of $V_i$ in $E_{j,i}|_{S_i}$ is equal to
$\frac{e_{j,i}}{l_i}$ for some nonnegative integer $e_{j,i}$.
Therefore,
$$
a_i=\frac{l_i-1}{l_i}+\sum b_{k,i}\frac{d_{k,i}}{l_i}+\sum \mu_{j,i}\frac{e_{j,i}}{l_i} .
$$
This is a contradiction because the above expression and Lemma \ref{l-ACC-DCC} show that the
set $\{a_i\}$ satisfies DCC,
while the $a_i$ form a strictly decreasing sequence. \\
\end{proof}

We will need the next technical lemma in the proof of Proposition \ref{p-global-acc-glc} 
to treat Theorem \ref{t-global-acc} inductively.

\begin{lem}\label{l-adjunction-2}
Let $d$ be a natural number and $\Lambda$ be a DCC set of nonnegative real numbers. 
Let $(X_i',B_{i}'+M_{i}')$ be a sequence of generalized lc polarized pairs of dimension $d$ 
with data $X_i\to X_i'\to Z_i$ and
$M_i$. Let $S_i'$ be the normalization of a component of $\rddown{B_i'}$ and consider 
the generalized adjunction formula
$$
K_{S_i'}+{B}_{S_i'}+M_{S_i'}=(K_{X_i'}+B_{i}'+M_i')|_{S_i'} .
$$ 
Assume further that
 
\begin{enumerate}
\item $X_i'$ is $\Q$-factorial and $Z_i$ is a point,

\item $B_i'=\sum b_{k,i}B_{k,i}'$ where $B_{k,i}'$ are distinct prime divisors and $b_{k,i}\in\Lambda$,

\item $M_i=\sum \mu_{j,i}M_{j,i}$ where $M_{j,i}$ are nef Cartier divisors and $\mu_{j,i}\in\Lambda$,

\item and one of the following holds: 

$\rm (i)$ $\{b_{1,i}\}$ is not finite, and $B_{1,i}'|_{S_i'}\not\equiv 0$ for each $i$, or 

$\rm (ii)$ $\{\mu_{1,i}\}$ is not finite, and $M_{1,i}'|_{S_i'}\not\equiv 0$ for each $i$.\\
\end{enumerate}
Then the set of the coefficients of all the ${B}_{S_i'}$ union the set
$\{\mu_{j,i} \mid M_{j,i}|_{S_i}\not\equiv  0\}$ is not finite.
\end{lem}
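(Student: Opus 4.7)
I would argue by contradiction. Suppose the union in question is finite, so that both $A:=\bigcup_i\{\text{coefficients of }B_{S_i'}\}$ and $N:=\{\mu_{j,i}\mid M_{j,i}|_{S_i}\not\equiv 0\}$ are finite.

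For case $({\rm ii})$ the argument is short. By the definition of generalized adjunction one has $M_{S_i'}=g_{i*}(M_i|_{S_i})$, so the hypothesis $M_{1,i}'|_{S_i'}\not\equiv 0$ forces $g_{i*}(M_{1,i}|_{S_i})\not\equiv 0$ and hence $M_{1,i}|_{S_i}\not\equiv 0$ (since pushforward preserves numerical triviality). Thus $\mu_{1,i}\in N$ for every $i$, contradicting the infiniteness of $\{\mu_{1,i}\}$.

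For case $({\rm i})$ I would pick, for each $i$, a prime component $V_i$ of the effective cycle $B_{1,i}'|_{S_i'}$, and let $a_i\in A$ be its coefficient in $B_{S_i'}$; passing to a subsequence I may assume $a_i=a$ is constant. Assuming for the moment that $S_i'$ is the unique generalized lc centre of $(X_i',B_i'+M_i')$ through the generic point of $V_i$ (the plt case), the Shokurov-type identity used in the proof of Proposition~\ref{p-adj-dcc} yields
$$
a_i=\frac{l_i-1}{l_i}+\sum_k b_{k,i}\frac{d_{k,i}}{l_i}+\sum_j\mu_{j,i}\frac{e_{j,i}}{l_i},
$$
with $l_i$ the local Cartier index, $d_{k,i},e_{j,i}\in\Z_{\geq 0}$, and $d_{1,i}\geq 1$ because $V_i\subset B_{1,i}'|_{S_i'}$. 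Whenever $e_{j,i}>0$ one has $M_{j,i}|_{S_i}\not\equiv 0$ near $V_i$, so $\mu_{j,i}\in N$. Rearranging gives $\sum_k b_{k,i}d_{k,i}+\sum_j\mu_{j,i}e_{j,i}=l_i a-(l_i-1)\leq 1$, and combined with the positive lower bound $\delta>0$ on nonzero elements of $\Lambda$ (from DCC), this bounds the integers $d_{k,i},e_{j,i}$, the number of nonzero terms, and also $l_i$ when $a<1$. After further subsequences, $l_i$, the contributing index sets $K\ni 1$ and $J$, the integers $d_{k,i}=d_k$, $e_{j,i}=e_j$, and the values $\mu_{j,i}$ for $j\in J$ (finite by $N$) can all be taken constant, yielding $\sum_{k\in K}b_{k,i}d_k=C$ for some constant $C$. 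By the infiniteness of $\{b_{1,i}\}$ and DCC of $\Lambda$, one more subsequence makes $b_{1,i}$ strictly increasing while the other $b_{k,i}$ ($k\in K\setminus\{1\}$) are weakly increasing; the left-hand side is then strictly increasing, contradicting its constancy.

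The hard part will be removing the plt assumption at $V_i$. If $(X_i',B_i'+M_i')$ carries an additional generalized lc centre through the generic point of $V_i$, then the classical different coefficient at $V_i$ equals $1$, forcing $a_i=1$ and erasing the visible dependence on $b_{1,i}$ in the formula above. I would address this by selecting $V_i$ inside the generalized plt locus of $(X_i',B_i'+M_i')$ whenever such a component of $B_{1,i}'|_{S_i'}$ exists; if no such component exists for infinitely many $i$, then $B_{1,i}'|_{S_i'}$ lies entirely in the non-plt locus, and one can either descend to a proper generalized lc centre of $(X_i',B_i'+M_i')$ passing through $V_i$ and iterate generalized adjunction in strictly lower dimension, or perturb the coefficient-one components of $B_i'$ other than $S_i'$ by a small amount to restore plt and thereby reduce to the previous case.
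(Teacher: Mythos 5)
Your argument for case (ii) has a genuine gap, and it is precisely the subtlety that this lemma exists to handle. You claim that $M_{1,i}'|_{S_i'}\not\equiv 0$ forces $M_{1,i}|_{S_i}\not\equiv 0$ because $M_{S_i'}=g_{i*}(M_i|_{S_i})$. But the hypothesis concerns the restriction of the \emph{pushdown} $M_{1,i}'=f_{i*}M_{1,i}$, and by the negativity lemma $f_i^*M_{1,i}'=M_{1,i}+E_{1,i}$ with $E_{1,i}\ge 0$ exceptional, so
$$
M_{1,i}'|_{S_i'}=g_{i*}(M_{1,i}|_{S_i})+g_{i*}(E_{1,i}|_{S_i}),
$$
which is not $g_{i*}(M_{1,i}|_{S_i})$ unless $M_{1,i}$ descends to $X_i'$. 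It can perfectly well happen that $M_{1,i}|_{S_i}\equiv 0$ while $g_{i*}(E_{1,i}|_{S_i})\neq 0$ (already for a blow-up of a surface with $M_{1,i}$ the strict transform of a line through the centre and $S_i$ that of another such line). In that situation $\mu_{1,i}$ does \emph{not} lie in your set $N$, and your one-line contradiction evaporates. The paper's proof splits exactly on this dichotomy: after passing to a subsequence one may assume $M_{1,i}|_{S_i}\equiv 0$ for all $i$, and then the nonzero effective divisor $g_{i*}(E_{1,i}|_{S_i})$ contributes components $V_i$ of $B_{S_i'}$ whose coefficients have the form $\frac{l_i-1}{l_i}+\sum b_{k,i}\frac{d_{k,i}}{l_i}+\sum\mu_{j,i}\frac{e_{j,i}}{l_i}$ with $e_{1,i}>0$, so the unbounded variation of $\mu_{1,i}$ is detected by the coefficients of $B_{S_i'}$ rather than by $N$. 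This is the missing idea; without it case (ii) is not proved.

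Your case (i) is essentially the paper's argument (choose $V_i$ a component of $B_{1,i}'|_{S_i'}$, which is a genuine nonzero effective divisor once one discards the at most one index with $b_{1,i}=1$, and exploit $d_{1,i}\ge 1$ in the Shokurov-type formula together with the DCC bookkeeping of Lemma \ref{l-ACC-DCC}); your detailed subsequence analysis fills in what the paper leaves implicit, and is fine. The concluding digression about the non-plt locus is where your write-up trails off into unproved alternatives ("descend to a proper lc centre and iterate", "perturb the coefficient-one components"), neither of which is carried out; the paper simply invokes the computation of Proposition \ref{p-adj-dcc}, where plt-ness at the generic point of $V_i$ is obtained from the coefficient of $V_i$ being bounded away from $1$ by inversion of adjunction. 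In any case, the decisive defect of the proposal is case (ii).
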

\begin{proof}
Let $V_i$ be a prime divisor on $S_i'$. As in the proof of Proposition \ref{p-adj-dcc},
the coefficient of $V_i$ in $B_{S_i'}$ is of the form
$$
a_i=\frac{l_i-1}{l_i}+\sum b_{k,i}\frac{d_{k,i}}{l_i}+\sum \mu_{j,i}\frac{e_{j,i}}{l_i}
$$
where $l_i$ is a natural number and $d_{k,i},e_{j,i}$ are nonnegative integers which are 
contributed by the $B_{k,i}'$ and $M_{j,i}'$ respectively.

Now assume $\rm (i)$ of (4) holds. Since $\{b_{1,i}\}$ is not finite, we can assume 
$b_{1,i}<1$ for each $i$ which in particular means $B_{1,i}'$ is not equal to the image of $S_i'$.
Thus $B_{1,i}'|_{S_i'}$ is a nonzero effective divisor for each $i$.
Choose $V_i$ to be a component of $B_{1,i}'|_{S_i'}$.
Then the set $\{a_i\}$ cannot be finite by Lemma \ref{l-ACC-DCC}
because $\{b_{1,i}\}$ is not finite and $d_{1,i}$ is positive. 

Next assume $\rm (ii)$ of (4) holds. 
Although  $M_{1,i}'|_{S_i'}$ is not numerically trivial by assumption but 
$M_{1,i}|_{S_i}$ may be numerically trivial for some $i$. 
If $M_{1,i}|_{S_i}$ is not numerically trivial
for infinitely many $i$, then obviously the set $\{\mu_{j,i} \mid M_{j,i}|_{S_i}\not\equiv  0\}$ is not finite 
and we are done.
So we may assume $M_{1,i}|_{S_i}$ is numerically trivial for every $i$. Recall 
from the proof of Proposition \ref{p-adj-dcc} that we can assume $f_i^*M_{j,i}'=M_{j,i}+E_{j,i}$ with 
$E_{j,i}\ge 0$. Now we can choose
$V_i$ so that $e_{1,i}\neq 0$ for each $i$: indeed since $M_{1,i}'|_{S_i'}\not\equiv 0$ but $M_{1,i}|_{S_i}\equiv 0$,
we deduce that $E_{1,i}|_{S_i}\neq 0$ and that its pushdown to $S_i'$ is also not
zero; thus the components of the pushdown of $E_{1,i}|_{S_i}$ are components of $B_{S_i'}$,
hence we can choose $V_i$ to be
one of these components. Again this shows that $\{a_i\}$ cannot be finite because $\{\mu_{1,i}\}$ is not finite
and $e_{1,i}>0$.\\
\end{proof}


\section{Bounds on the number of coefficients of $B_i'$ and $M_i'$}\label{s-bnd-comp}

A well-known fact says that if $(X,B)$ is a lc pair, then near each point $x\in X$ the number of components of $B$ 
with coefficient $\ge b>0$ is bounded in terms of $b$ and dimension of $X$.  There is also a global 
version of this fact. In this section, we prove similar local and global statements bounding the number of the 
coefficients of $B_i'$
and the $\mu_j$ in $M=\sum \mu_j M_j$ of a generalized lc polarized pair $(X',B'+M')$ under certain assumptions. 
These bounds will be used in the proof of Proposition \ref{p-glct-to-global-acc}.

We start with a global statement for pairs which can also be applied to generalized polarized pairs. 

\begin{prop}\label{p-toric}
Let $d$ be a natural number and $b$ a positive real number.
Let $(X,B)$ be a projective lc pair of dimension $d$
such that
\begin{itemize}
\item[(i)]
$B\ge\sum_1^r B_k$ where $B_k\ge 0$ are big $\R$-Cartier divisors,
\item[(ii)]
$B_k = \sum b_{j,k} B_{j,k}$ is the irreducible decomposition and $b_{j,k} \ge b$ for every $j,k$, and
\item[(iii)]
$K_{X}+B+P\equiv 0$ for some pseudo-effective $\R$-Cartier divisor $P$.\\
\end{itemize}
Then the number of the $B_k$ is at most $(d+1)/b$, that is, $r\le (d+1)/b$.
\end{prop}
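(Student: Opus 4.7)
The plan is to use the LMMP to reduce to the case where $X$ is a klt Fano variety of Picard number one, and then apply Mori's bend-and-break.

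First, I would replace $B$ by $\tilde B:=\sum_{k=1}^r B_k$; the pair $(X,\tilde B)$ remains lc, the class $-(K_X+\tilde B)\equiv P+(B-\tilde B)$ is still pseudo-effective, and $-K_X$ is big (being dominated by $\tilde B$, a sum of big divisors). Next I would take a $\Q$-factorial dlt modification $f\colon(Y,B_Y)\to(X,\tilde B)$. Since $K_Y+(1-\epsilon)B_Y$ is klt and not pseudo-effective for small $\epsilon>0$, the LMMP with scaling of an ample divisor terminates on a Mori fibre space $Y'\to T$ by BCHM. Each $B_k$ has a birational transform $B_k'$ on $Y'$ which is effective and $\R$-Cartier, with component coefficients still $\ge b$, and still big (birational pushforward preserves bigness).

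If $\dim T=0$, then $Y'$ is a $\Q$-factorial klt Fano variety with $\rho(Y')=1$. Mori's bend-and-break produces a general rational curve $C\subset Y'$ lying in the smooth locus (which has codimension $\ge 2$) with $-K_{Y'}\cdot C\le d+1$ and meeting each prime component $B'_{j,k}$ of each $B_k'$ properly. At smooth points each $B'_{j,k}$ is Cartier, so $B'_{j,k}\cdot C$ is a nonnegative integer; and since $Y'$ has Picard number one every nonzero effective divisor has positive numerical class, so $B'_{j,k}\cdot C\ge 1$; the bigness of $B_k'$ ensures at least one $B'_{j,k}$ occurs, giving $B'_k\cdot C\ge b$. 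Summing yields
\[
rb\le\sum_k B'_k\cdot C\le -K_{Y'}\cdot C\le d+1.
\]
If $\dim T>0$, I would restrict to a general fibre $F$ of $Y'\to T$: by adjunction $(F,B'|_F)$ is lc with $K_F+B'|_F+P'|_F\equiv 0$ for a pseudo-effective $P'|_F$; each $B_k'|_F$ has component coefficients $\ge b$; and $B_k'|_F$ is ample (hence big) because $B_k'$ cannot be vertical (being big on $Y'$) and $\rho(Y'/T)=1$. Induction on $d$ then gives $r\le(\dim F+1)/b\le(d+1)/b$.

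The main obstacle will be the technical bookkeeping through the LMMP --- ensuring that the bigness and the component-coefficient condition on each $B_k$ survive the dlt modification and the subsequent sequence of divisorial contractions and flips --- together with invoking a sufficiently strong form of Mori's bend-and-break on klt Fano varieties, producing a rational curve of bounded anticanonical degree that meets each prime component $B'_{j,k}$ at smooth points of $Y'$.
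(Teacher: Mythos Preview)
Your overall architecture (reduce to a $\Q$-factorial klt Fano of Picard rank one, then bound via a curve of low anticanonical degree) matches the paper's. But there is a genuine gap at the dlt-modification step.

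You take a dlt modification of $(X,\tilde B)$ with $\tilde B=\sum_k B_k$, and then assert that the birational transform of each $B_k$ on $Y$ (and hence on $Y'$) is still big. This is false in general: strict transform to a \emph{higher} model does not preserve bigness. Concretely, take $X=\mathbb{P}^2$, $B=L_1+L_2+L_3$ three lines through a point $p$; the dlt modification is the blowup of $p$, and each $\tilde L_k$ on $Y=\mathrm{Bl}_p\mathbb{P}^2$ is a fibre of the ruling $Y\to\mathbb{P}^1$, hence not big. Moreover, your LMMP on $K_Y+(1-\epsilon)B_Y$ in this example terminates immediately with the Mori fibre space $Y\to\mathbb{P}^1$, and then $\tilde L_k|_F=0$ on a general fibre, so your induction step collapses. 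Your parenthetical justification ``birational pushforward preserves bigness'' applies only to the map $Y\dashrightarrow Y'$, not to the passage $X\leftarrow Y$.

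The paper fixes this by taking the dlt modification of $(X,B-\sum_k B_k)$ rather than of $(X,\tilde B)$. The point is that every exceptional divisor $E$ of this modification has log discrepancy $0$ for $(X,B-\sum_k B_k)$; if the centre of $E$ were contained in $\Supp B_k$, then the log discrepancy of $E$ for $(X,B)$ would be strictly negative, contradicting lc. Hence no exceptional centre meets $\Supp(\sum_k B_k)$, so $f^*B_k$ equals the strict transform and bigness survives. After this, the paper runs the LMMP on $K_X+B$ (or on $K_X$ when $P\equiv 0$), which visibly preserves the lc property of $(X,B)$ and cannot contract any $B_k$ since $B_k$ is big. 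For the endgame on the Picard-rank-one Fano, the paper cites [\ref{Kollar+}, Corollary 18.24] directly, which packages the bend-and-break input you describe; your curve-counting argument is essentially what underlies that corollary, though one should be careful that intersection with a Weil divisor at smooth points gives integers.
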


\begin{proof}
Let $(Y,\Delta)$ be a $\Q$-factorial dlt model of $(X, B-\sum_1^r B_k)$ and $f\colon Y\to X$ the
corresponding morphism. By definition, $\Delta$ is the sum of the reduced exceptional
divisor of $f$ and the birational transform of $B-\sum_1^r B_k$. Moreover, since $(X,B)$ is lc, 
$\Supp (\sum_1^r B_k)$ does not contain the image of
any exceptional divisor of $f$, hence $f^*B_k$ is equal to the birational transform of
$B_k$. In particular, $f^*B_k$ is big and it inherits the same coefficients as $B_k$.
Moreover, by letting $B_Y:=\Delta+\sum_1^r f^*B_k$ we get 
$$
K_Y+B_Y+f^*P=K_Y+\Delta+\sum_1^r f^*B_k+f^*P=f^*(K_X+B+P)\equiv 0 .
$$
Now by replacing $(X,B)$ with $(Y,B_Y)$ and
replacing $P$ with $f^*P$ we can assume that $(X,0)$ is $\Q$-factorial klt.
Moreover, by adding $B-\sum_1^r B_k$ to $P$ we can assume $B=\sum_1^r B_k$.

If $P \not\equiv 0$, then $K_X+B$ is not pseudo-effective so we can run an
LMMP on $K_{X}+B$ which terminates with a Mori fibre space, by Lemma \ref{l-LMMP}(1).
But if $P \equiv 0$, then $K_X$ is not pseudo-effective as $B$ is big, and we can run an LMMP on $K_{X}$
which terminates with a Mori fibre space [\ref{BCHM}]. Note that in both cases the LMMP preserves the
lc property of $(X,B)$ and the $\Q$-factorial klt property of $(X,0)$: in the first case 
the klt property of $(X,0)$ is preserved since the LMMP is also an LMMP on $K_X+\tilde{B}$ for some 
klt $(X,\tilde{B})$; in the second case
the lc property of $(X,B)$ is preserved as $K_X+B\equiv 0$. Also note that in either case the LMMP does not contract any $B_k$
because $B_k$ is big (although some of its components may be contracted).
So in either case replacing $X$ with the Mori fibre space obtained we may assume that
we already have a $K_X$-negative Mori fibre structure $X\to T$.

Let  $F$ be a general fibre of $X\to T$.
Since $B_k$ is big, $B_k|_{F}$ is big too.
Restricting to $F$ and applying induction on dimension we can reduce the problem
to the case $\dim T=0$, that is, when $X$ is a $\Q$-factorial klt Fano variety of Picard number one.
Pick a small number $\epsilon>0$. For each $j,k$ take a rational number $b_{j,k}'\le b_{j,k}$ such that
$b_{j,k}' \ge b - \epsilon$. Let $B'=\sum_k\sum_j b_{j,k}'B_{j,k}$. Then there is $P'\ge 0$
such that $K_X+B'+P'\equiv 0$ and $(X,B'+P')$ is lc. Now by [\ref{Kollar+}, Corollary 18.24],
$$
r(b-\epsilon)\le \sum_k\sum_j b_{j,k}'\le d + 1 .
$$
Therefore taking the limit when $\epsilon$
approaches $0$ we get $rb\le d + 1$ hence $r\le (d+1)/b$.\\
\end{proof}

Next we prove a result similar to Proposition \ref{p-toric},
though not as sharp, for the nef part of generalized polarized pairs.

\begin{prop}\label{p-bnd-comps}
Let $d$ be a natural number and $b$ a positive real number.
Assume that the ACC for generalized lc thresholds (Theorem \ref{t-acc-glct}) holds
in dimension $d$. Then there is a natural
number $p$ depending only on $d,b$ such that if
$(X',B'+M')$ is a generalized lc polarized pair of dimension $d$ with data
$X\overset{f}\to X'\to Z$ and $M$ satisfying
\begin{itemize}
\item[(i)]
$Z$ is a point,
\item[(ii)]
$M=\sum_1^r \mu_j M_j$ where $M_j$ are nef Cartier divisors  and $\mu_j\ge b$,
\item[(iii)]
$M_j'$ is a big $\Q$-Cartier divisor for every $j$, and
\item[(iv)]
$K_{X'}+B'+M'+P'\equiv 0$ for some pseudo-effective $\R$-Cartier divisor $P'$,\\
\end{itemize}
then the number of the $\mu_j$ is at most $p$, that is, $r\le p$.
\end{prop}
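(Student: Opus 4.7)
The plan is to reduce the statement to Proposition \ref{p-toric} by converting each nef component $\mu_jM_j'$ into an effective boundary component whose coefficient is bounded below by a positive constant $b' = b'(b,d)$. The overall structure mirrors the reductions of Proposition \ref{p-toric}, but with an extra step that invokes the hypothesized ACC for generalized lc thresholds.

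First I would reduce to the case when $X'$ is a $\Q$-factorial klt Fano variety of Picard number one. Taking a $\Q$-factorial dlt model, we may assume $(X',0)$ is klt and $X'$ is $\Q$-factorial. Since each $M_j'$ is big, the class $-K_{X'} \equiv B' + \sum_j \mu_j M_j' + P'$ is big. Running an LMMP on $K_{X'}+B'+M'$ with scaling of an ample divisor (permitted by Lemma \ref{l-LMMP}) terminates with a Mori fibre space, and restricting to a general fibre and inducting on the dimension $d$ reduces us to the case $\rho(X')=1$ with $X'$ a $\Q$-factorial klt Fano. On such $X'$ every nonzero nef $\Q$-Cartier divisor is ample, so each $M_j'$ is now ample.

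Next I would replace the nef contribution $\mu_j M_j'$ of the polarization by an effective $\R$-divisor $D_j' \sim_\R \mu_j M_j'$ chosen so that $(X',B'+\sum_j D_j'+P')$ is lc, every $D_j'$ is big, and each $D_j'$ has some component of coefficient $\ge b'$. Once this is achieved, Proposition \ref{p-toric} applied to $(X',B'+\sum_j D_j'+P')$ yields $r \le (d+1)/b'$, giving the bound $p=p(d,b)$. Producing such $D_j'$ would combine Bertini on the ample classes $M_j'$ (to get $\Q$-effective representatives) with the ACC for generalized lc thresholds, Theorem \ref{t-acc-glct}. A contradiction argument is natural here: if the proposition failed, one would obtain a sequence of generalized polarized pairs with $r_i \to \infty$; after the reductions above and passing to well-ordered subsequences via Lemmas \ref{l-ACC-DCC} and \ref{l-ordering-divs}, delicately chosen lc thresholds (for instance of some $D_{1,i}'$ against $(X_i',B_i'+(M_i'-\mu_{1,i}M_{1,i}'))$) should form a strictly ascending sequence, contradicting Theorem \ref{t-acc-glct}.

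The main obstacle is precisely this conversion. Each $M_j'$ is ample $\Q$-Cartier on the Picard-rank-one Fano $X'$, but the Cartier indices of these divisors are not a priori bounded since Picard-number-one klt Fanos do not form a bounded family. Consequently, naive Bertini applied to $|n_jM_j'|$ produces representatives with coefficients of size $\mu_j/n_j$, which is not uniformly bounded below. Leveraging the ACC for generalized lc thresholds to force the coefficients of the $D_j'$ into a DCC set depending only on $b$ and $d$ is the crux of the proof and where most of the effort lies; once this is in place the bound follows from Proposition \ref{p-toric} with essentially no further work.
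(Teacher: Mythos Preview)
Your reduction to a $\Q$-factorial klt Fano of Picard number one is fine and matches the paper. The gap is exactly where you flag it: the conversion of $\mu_jM_j'$ into effective divisors $D_j'$ with a uniform lower bound $b'$ on their coefficients cannot be carried out, and the ACC hypothesis does not rescue it. To invoke Theorem~\ref{t-acc-glct} for the threshold of some $D_{1,i}'$ against $(X_i',B_i'+N_i')$ you would need the coefficients of $B_i'$, of $D_{1,i}'$, and the $\mu_{j,i}$ appearing in $N_i$ all to lie in a fixed DCC set. None of this is available: the proposition assumes only $\mu_j\ge b$ (not DCC), places no DCC constraint on $B'$, and your $D_{1,i}'$ are yet to be constructed. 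Even if you strip $B'$ (absorbing it into $P'$) and take irreducible $D_{1,i}'$ with coefficient $1$, you still cannot feed the $\mu_{j,i}$ into Theorem~\ref{t-acc-glct}, and there is no mechanism by which a growing $r_i$ forces these particular thresholds to strictly increase. So the contradiction you sketch does not close.

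The paper's proof does not attempt this conversion at all. Instead it works directly with the nef divisors in two stages. First (Step~3), after ordering the $M_j'$ so that $M_j'\equiv\lambda_jM_1'$ with $\lambda_j\ge 1$, it shifts weight from $\tilde\mu_2,\tilde\mu_3,\dots$ into $\tilde\mu_1$ while preserving both generalized lc and the numerical identity. The number of $\mu_j$'s consumed in this process is bounded because $\tilde\mu_1\ge (l-1)b$ and $\tilde\mu_1$ is bounded above by the generalized lc threshold of $M_1'$ with respect to $(X',0)$, which lies in an ACC set by Theorem~\ref{t-acc-glct} with $\Lambda=\{1\}$. If this exhausts all the $\mu_j$, done; otherwise one has reduced to the case where $(X',B'+M')$ is generalized lc but not generalized klt. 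Second (Steps~4--6), one extracts a prime divisor $S''$ of generalized log discrepancy $0$ via Lemma~\ref{l-extract-div-2}, runs an LMMP on $-S''$ to a Mori fibre space, and then applies the local bound Proposition~\ref{p-bnd-comps-local} at the relevant generic points (which in turn uses adjunction, induction on dimension, and ultimately Proposition~\ref{p-toric}). The role of Proposition~\ref{p-toric} is thus indirect, entering only inside the proof of the local statement after adjunction has converted the $M_j$-data into boundary coefficients on a lower-dimensional variety; it is not applied directly on $X'$ as you propose.
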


Before giving the proof we prove a related local statement.

\begin{prop}\label{p-bnd-comps-local}
Let $d$ be a natural number and $b$ a positive real number.
Assume that Theorem \ref{t-acc-glct} and Proposition \ref{p-bnd-comps} hold
in dimension $<d$. Then there is a natural
number $q$ depending only on $d,b$ such that if
$(X',B'+M')$ is a $\Q$-factorial generalized lc polarized pair of dimension $d$ with data
$X\overset{f}\to X'\to Z$ and $M$, and if 
\begin{itemize}
\item[(i)]
 $x'\in X'$ is a (not necessarily closed) point,
\item[(ii)]
$M=\sum_1^r \mu_j M_j$ where $M_j$ are nef$/Z$ Cartier divisors and $\mu_j\ge b$,
\item[(iii)]
$M_j$ is not relatively numerically zero over any neighborhood of $x'$, for every $j$, and
\item[(iv)]
$(X',0)$ is klt,\\
\end{itemize}
then the number of the $\mu_j$ is at most $q$, that is, $r\le q$.
\end{prop}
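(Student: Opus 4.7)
The plan is to deduce the local bound from the global statement Proposition \ref{p-bnd-comps}, applied in dimension $d-1$, by extracting a prime divisor $E$ over $x'$ and then restricting via generalized adjunction. Since any open neighborhood of a closed point $x''\in\overline{\{x'\}}$ is in particular an open neighborhood of $x'$, the hypotheses (i)--(iv) at $x'$ imply the corresponding hypotheses at $x''$; so it suffices to prove the bound when $x'$ is a closed point of $X'$, which is the case I treat.

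Using the klt property of $(X',0)$ together with the $\Q$-factoriality of $X'$, I would choose a general effective ample $\R$-divisor $A'\ge 0$ through $x'$ and let $t>0$ be the generalized lc threshold of $A'$ with respect to $(X',B'+M')$ at $x'$. After a standard tie-breaking perturbation, I may arrange that $(X',B'+tA'+M')$ is generalized lc with a unique generalized non-klt place, whose center is $\{x'\}$ and whose generalized log discrepancy is $0<1$. Applying Lemma \ref{l-extract-div-2} (with its auxiliary klt pair $(X',0)$) to this modified pair, I obtain a $\Q$-factorial projective birational morphism $\phi\colon X''\to X'$ whose unique exceptional divisor is the extracted $E$, with $\phi(E)=\{x'\}$, satisfying
$$
K_{X''}+B_0''+M''=\phi^*(K_{X'}+B'+tA'+M'),
$$
where $B_0''$ is the sum of the birational transform of $B'+tA'$ and $E$ with coefficient $1$. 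Moreover, by the final assertion of Lemma \ref{l-extract-div-2} (since $r=1$ and $X'$ is $\Q$-factorial), $\phi$ may be taken to be an extremal contraction.

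Generalized adjunction along $E$ (Definition \ref{d-q-adjunction}) now yields a generalized polarized pair $(E,B_E+M_E)$ of dimension $d-1$, projective over the point $x'$, whose nef part is $M_E=\sum_j\mu_j\, M_{j,E}$; here each $M_{j,E}$ is a nef Cartier divisor on a log resolution of $E$, and the coefficients $\mu_j\ge b$ are preserved. Crepancy of $\phi$ together with $\phi(E)=\{x'\}$ gives $K_E+B_E+M_E\equiv 0$, which would allow the application of Proposition \ref{p-bnd-comps} in dimension $d-1$ with $P_E=0$, thereby bounding $r$ in terms of $d$ and $b$.

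The main obstacle will be verifying hypothesis (iii) of Proposition \ref{p-bnd-comps} on $E$, namely that each $M_{j,E}$ is big. The non-triviality assumption that $M_j$ is not relatively numerically zero over any neighborhood of $x'$ guarantees that $M_{j,E}$ is nef and numerically non-zero on $E$, but in dimension $\ge 2$ \emph{nef plus non-zero} does not immediately imply \emph{big}. To bridge this gap I would exploit the extremal structure of $\phi$: because $\phi$ is an extremal contraction of $E$ with image a single point, $E$ carries a natural Fano-type structure, and the relation $K_E+B_E+M_E\equiv 0$ together with the uniform lower bound $\mu_j\ge b$ should force each non-zero nef $M_{j,E}$ to be big, possibly after a further LMMP reduction on $(E,B_E+M_E)$ to a model of relative Picard number one, mirroring exactly the reductions used in the proof of Proposition \ref{p-bnd-comps}.
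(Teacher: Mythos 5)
Your skeleton matches the paper's: extract a single divisor $E$ over $x'$ with generalized log discrepancy $0$ via Lemma \ref{l-extract-div-2}, restrict by generalized adjunction, and invoke Proposition \ref{p-bnd-comps} in lower dimension. The reduction to a closed point is logically sound (every neighbourhood of a specialization $x''\in\overline{\{x'\}}$ is a neighbourhood of $x'$, so the hypotheses descend), although the paper instead keeps $C'=\overline{\{x'\}}$ and fibres $S''\to C'$. However, you have correctly located the hard point and then left it unresolved, and your proposed fix does not work.

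The gap is this. Writing $g^*M_j''=M_j+E_j$ with $E_j\ge 0$ exceptional, the restriction of $M_j''$ to $E$ decomposes as $h_*(M_j|_{\tilde E})+h_*(E_j|_{\tilde E})$. Hypothesis (iii) only guarantees that $M_j''$ is ample over $X'$, hence that this \emph{sum} is big on $E$; it does not guarantee anything about the nef part $M_{j,E}=h_*(M_j|_{\tilde E})$ alone, which can perfectly well be numerically trivial, with all the positivity carried by $h_*(E_j|_{\tilde E})$. So your intermediate claim that each $M_{j,E}$ is numerically non-zero is already false, and no amount of LMMP or Picard-number-one reduction will make a numerically trivial divisor big. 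For those indices $j$, Proposition \ref{p-bnd-comps} is simply inapplicable (its hypothesis (iii) requires each $M_j'$ to be big), and your argument has no mechanism to count them. The paper's proof supplies exactly this missing mechanism: the term $\mu_j h_*(E_j|_{\tilde E})$ lands in the \emph{boundary} $B_E$ of the adjunction, the coefficient formula $b_V\ge 1-\tfrac{1}{l}+\sum\tfrac{\mu_j n_j}{l}$ forces $\sum\mu_j n_j\le 1$ and hence each prime divisor of $E$ to be a component of at most $s=s(b)$ of the $h_*(E_j|_{\tilde E})$, so the ``bad'' indices produce at least $(r-t)/s$ distinct big boundary components with coefficient $\ge\min\{b,\tfrac12\}$ on a suitable Mori fibre space, and these are bounded by Proposition \ref{p-toric}. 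Without this second, boundary-side count (or an equivalent), the proof cannot be completed.
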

\begin{proof}
\emph{Step 1}.
Let $C'$ be the closure of $x'$. By $({\rm iii})$, the codimension of $C'$ in $X'$ is at least two.
By adding appropriate divisors to $B'$ and shrinking $X'$ we can assume $C'$ is a
generalized lc centre of $(X',B'+M')$: to be more precise, let $W$ be the
blowup of $X'$ along $C'$; we can assume $X\to X'$ factors through $W$;
now take a general sufficiently ample divisor on $W$ and let $A$ be its pullback to $X$; if
$\alpha$ is the generalized lc threshold of $A'$ near $x'$ with respect to $(X',B'+M')$, then
$(X',B'+\alpha A'+M')$ is generalized lc near $x'$ with boundary part $B'+\alpha A'$
and nef part $M$, and $C'$ is a
generalized lc centre of $(X',B'+\alpha A'+M')$; the point is that after shrinking $X'$ we can assume
$f^*A'=A+E$ where $E\neq 0$ is effective with large coefficients,
and that every component of $E$ maps onto $C'$ so adding $\alpha A'$ creates deeper
singularities only along $C'$. Now we may replace $B'$ with $B'+\alpha A'$.\\

\emph{Step 2}.
By Step 1, we can assume that there is a
prime divisor $S$ on $X$ mapping onto $C'$ whose generalized log discrepancy with respect to $(X',B'+M')$ is
$0$. Since $(X',0)$ is $\Q$-factorial klt, by Lemma \ref{l-extract-div-2}, there is an extremal birational contraction
$\phi : X''\to X'$ which extracts  $S''$, the birational transform of $S$, and $X''$ is $\Q$-factorial. We can write
$$
K_{X''} + B'' + M''=\phi^*(K_{X'} + B' + M')
$$
where $B''$ is the sum of $S''$ and the birational transform of $B'$, and $M''$ is
the pushdown of $M$. Writing
$$
K_X + B + M = f^*(K_{X'} + B' + M')
$$
we can see that $B''$ is just the pushdown of $B$.

We claim that $M_j''$ is not numerically trivial over any neighborhood of $x'$ for any $j$
which in turn implies that $M_j''$ is ample$/X'$. If this is not
true for some $j$, then we can write $f^*M_j'=M_j+\tilde{E}_j$ where $\tilde{E}_j\ge 0$ and $S$ is not a
component of $\tilde{E}_j$. But then for any general closed point $y'\in C'$, the fibre $f^{-1}\{y'\}$
is not inside $\Supp \tilde{E}_j$, so the fibre does not intersect $\Supp \tilde{E}_j$, 
by [\ref{KM}, Lemma 3.39(2)]. Therefore, $\tilde{E}_j=0$ 
over the generic point of $C'$, that is over $x'$, hence $M_j$ is numerically trivial over some neighborhood 
of $x'$, a contradiction.\\ 

\emph{Step 3.}
We can assume the induced map $g\colon X\bir X''$ is a morphism.
To ease notation we replace $S''$ with its normalization and denote the induced morphism
$S\to S''$ by $h$. By generalized adjunction and usual adjunction, we can write
$$
K_{S''} + B_{S''} + M_{S''} =(K_{X''} + B'' + M'') |_{S''}\equiv 0/C'
$$
and 
$$
K_{S''}+\Delta_{S''}=(K_{X''}+B'')|_{S''}.
$$
Write $g^*M_j'' = M_j + E_j$ where  $E_j\ge 0$ is exceptional$/X''$.
Then
$$
M_j'' |_{S''} = h_*({M_j}|_S) + h_*({E_j}|_S)
$$
and
$$
M'' |_{S''}=\sum \mu_j h_*({M_j}|_S)+\sum \mu_jh_*({E_j}|_S)=M_{S''}+\sum \mu_jh_*({E_j}|_S)
$$
and
$$
B_{S''}=\Delta_{S''}+ \sum \mu_jh_*({E_j}|_S) .
$$

Let $V$ be a prime divisor on $S''$ and $b_V$ be its coefficient in $B_{S''}$.
Then, by the proof of Proposition \ref{p-adj-dcc},
$$
b_V\ge 1 - \frac{1}{l}+\sum \frac{\mu_jn_j}{l}
$$
for some natural number $l$ and integers $n_j\ge 0$. Moreover, $n_j>0$ if $V$ is a component of
 $h_*({E_j} |_S)$. This in particular shows that there is a natural number $s$ depending only on $b$
such that $V$ is a component of $h_*({E_j}|_S)$ for at most $s$ of the $j$ because $\sum \mu_jn_j\le 1$.\\

\emph{Step 4}.
Let $F''$ be a general fibre
of the induced map $S'' \to C'$ and $F$ the corresponding fibre of $S\to C'$.
Restricting to $F''$ as in Remark \ref{r-g-sing}(6), we get
$$
K_{F''} + B_{F''} + M_{F''} =(K_{S''} + B_{S''} + M_{S''})|_{F''} \equiv 0 .
$$
Also we get  
$$
K_{F''}+\Delta_{F''}:=(K_{S''}+\Delta_{S''})|_{F''}.
$$
Denote the morphism $F\to F''$ by $e$. Since $F''$ is a general fibre, restricting Weil divisors 
on $S''$ to $F''$ makes sense, and if $P$ is a Weil divisor on $S$, 
then we have $(h_*P)|_{F''}=e_*(P|_F)$. 
Therefore,
$$
M_j''|_{F''}=e_*(E_j |_F)+ e_*(M_j|_F), ~~~M_{F''}=\sum \mu_je_*(M_j|_F),
$$
and
$$
B_{F''}=B_{S''}|_{F''}=(\Delta_{S''}+\sum \mu_jh_*({E_j}|_S))|_{F''}=\Delta_{F''}+\sum  \mu_j e_*(E_j |_F).
$$

Since $F''$ may not be $\Q$-factorial, we need to make some further constructions. 
Let $(H'',\Delta_{H''})$ be a $\Q$-factorial
dlt model of $(F'',\Delta_{F''})$ and $\psi\colon H''\to F''$ the corresponding morphism.
By definition
$$
K_{H''}+\Delta_{H''}=\psi^*(K_{F''}+\Delta_{F''})
$$
and the exceptional divisors of $\psi$ all appear with coefficient $1$ in $\Delta_{H''}$.
Moreover, we can write
$$
K_{H''}+B_{H''}+M_{H''}=\psi^*(K_{F''}+B_{F''}+M_{F''})\equiv 0
$$
where $B_{H''}$ is the sum of the birational transform of $B_{F''}$ and the reduced
exceptional divisor of $\psi$. 

We can assume $c\colon F\bir H''$ is a morphism. By construction,
$$
\psi^*(M_j''|_{F''})=c_*(E_j |_F)+ c_*(M_j|_F) 
$$
which is big, and 
$$
M_{H''}=\sum \mu_jc_*(M_j|_F)~~\mbox{and}~~B_{H''}=\Delta_{H''}+\sum \mu_jc_*(E_j |_F).
$$ 
Moreover, since the exceptional divisors of $\psi$ are components of $\rddown{\Delta_{H''}}$, the divisor 
$\sum \mu_jc_*(E_j|_F)$ has no exceptional component, so it is just the birational transform of $\sum \mu_je_*(E_j|_F)$.\\

\emph{Step 5.}
Run an LMMP on $K_{H''}$. It terminates with a Mori fibre space $\overline{H}'' \to \overline{T}''$
and the generalized lc
property of $({H''}, B_{H''} + M_{H''})$ is preserved by the LMMP. Since
$c_*(E_j |_F)+c_*(M_j |_F)$ is big,  its pushdown to $\overline{H}''$ is also big, hence
ample over $\overline{T}''$. Let ${\overline G}''$ be a general fibre of the above Mori fibre space.
Then restriction to ${\overline G}''$ gives
$$
K_{{\overline G}''} + B_{{\overline G}''} + M_{{\overline G}''}=
(K_{{\overline H}''} + B_{{\overline H}''} + M_{{\overline H}''})|_{{\overline G}''} \equiv 0 .
$$
By construction, $M_{{\overline G}''}=\sum \mu_ja_*(M_j|_F)|_{{\overline G}''}$ where we can assume
$a\colon F\bir \overline{H}''$ is a morphism.
Applying Proposition \ref{p-bnd-comps} and rearranging the indexes, we can assume that there
is a natural number $t$ depending only on $d,b$ such that $a_*(M_j|_F)|_{{\overline G}''}\equiv 0$
for every $j>t$. But then $a_*(E_j |_F)|_{{\overline G}''}$ is big for each $j>t$.

For each $j>t$ choose a component $W_j$ of $a_*(E_j |_F)$ which is ample over $\overline{T}''$.
By construction, $W_j$ is the birational transform of a component $U_j$ of $e_*(E_j |_F)=(h_*(E_j|_S))|_{F''}$
and $U_j$ in turn is a component of $V_j\cap F''$ for some component $V_j$ of $h_*(E_j|_S)$. 
Moreover, $W_k=W_j$ if and only if $U_k=U_j$ if and only if $V_k=V_j$. 
By Step 3, for each $k$, $V_k=V_j$ for at most $s$ of the $j$. 
Thus for each $k$, $W_k=W_j$ for at most $s$ of the $j$.    
On the other hand, by Steps 3 and 4,  the $V_j$ appear as components  
of $B_{{S}''}$ with coefficient $\ge \min\{b,\frac{1}{2}\}$, and there exist 
at least $\frac{r-t}{s}$ such components. Similarly the $W_j$ appear as components  
of $B_{{\overline H}''}$ with coefficient $\ge \min\{b,\frac{1}{2}\}$, and there exist 
at least $\frac{r-t}{s}$ such components. 
Now apply Proposition \ref{p-toric} to $({\overline G}'', B_{{\overline G}''})$
to deduce that $\frac{r-t}{s}$ is bounded hence $r$ is bounded by
some $q$.\\
\end{proof}

\begin{proof}(of Proposition \ref{p-bnd-comps})
We argue by induction on the dimension $d$. The case $d = 1$ is clear.
Suppose that the proposition holds in dimension $< d$.

\emph{Step 1.}
Since $(X',B'+M')$ is generalized lc and $K_{X'}+B'$ is $\R$-Cartier, $(X',B')$ is lc.
Let $(X'',B'')$ be a $\Q$-factorial dlt model of $(X',B')$ and $\phi\colon X''\to X'$ the corresponding morphism.
We may assume $X\bir X''$ is a morphism. For each $j$, we have $\phi^*M_j'=M_j''+E_j''$ where $E_j''\ge 0$ 
is exceptional$/X'$ and $M_j''$ is the pushdown of $M_j$.
So
$$
K_{X''}+B''+\sum \mu_jE_j''+M''=\phi^*(K_{X'}+B'+M')
$$
where $M''$ is the pushdown of $M$.
Since the exceptional divisors of 
$\phi$ are components of $\rddown{B''}$ and since $(X',B'+M')$ is generalized lc, 
we deduce $E_j''=0$ for every $j$, hence $M_j''=\phi^*M_j'$ for every $j$ and $M''=\phi^*M'$.
Thus we may replace
$X'$ with $X''$, hence assume that $(X',B')$ is $\Q$-factorial dlt.\\

\emph{Step 2.}
If $P' \not\equiv 0$, then $K_{X'}+B'+M'$ is not pseudo-effective and so
we can run an LMMP on $K_{X'}+B' + M'$ which terminates with a Mori
fibre space, by Lemma \ref{l-LMMP}(1).
But if $P' \equiv 0$, then $K_{X'}+B'$ is not pseudo-effective as $M'$ is big and so we can run an LMMP on $K_{X'}+B'$
which terminates with a Mori fibre space. Note that in both cases the generalized lc property of $(X', B'+ M')$
is preserved: in the second case we use the fact $K_{X'} + B' + M' \equiv 0$. Also note that in both cases
none of the $M_j'$ is contracted by the LMMP since $M_j'$ is big.
In either case we can replace $X'$ with the Mori fibre space hence we may assume
we already have a Mori fibre structure $X'\to T'$. Let $F'$ be a general fibre of this fibre
space. Since $M_j'$ is big, $M_j'|_{F'}$ is big too.
Restricting to $F'$ and applying induction on dimension we can reduce the problem
to the case $\dim T'=0$, that is, when $X'$ is a Fano variety of Picard number one.
\\

\emph{Step 3.}
Perhaps after changing the indexes we may write $M_j'\equiv \lambda_{j} M_1'$ such that
$\lambda_j\ge 1$ for every $j$. Now we define $\tilde{\mu}_j$ as follows:
initially let $\tilde{\mu}_j={\mu}_j$; next decrease $\tilde\mu_2$ and instead
increase $\tilde\mu_1$ as much as possible so that
$$
(X',B'+\sum_{j\neq 2} \tilde\mu_jM_j')
$$
is generalized lc and
$$
K_{X'}+B'+\sum \tilde\mu_jM_j'+P'\equiv 0 .
$$
Either we hit a generalized lc threshold, i.e.
$(X',B'+\sum_{j\neq 2} \tilde\mu_jM_j')$ is generalized lc but not generalized klt, or that we reach $\tilde\mu_2=0$.
If the first case happens, we stop. But if the second case happens we repeat
the process by decreasing $\tilde\mu_3$ and increasing $\tilde \mu_1$, and so on.

We show that the above process involves only a bounded number of the $\mu_j$. Let
$l$ be the smallest number such that $\tilde\mu_j=\mu_j$ for every $j> l$.
We want to show that $l$ is bounded depending only on $d,b$. We can assume $l>1$.
By construction,
$$
\tilde\mu_1\ge \sum_{j\le l-1} \mu_j\lambda_j \ge \sum_{j\le l-1} \mu_j\ge (l-1)b
$$
so it is enough to show that $\tilde\mu_1$ is bounded depending only on $d,b$.
If $M_1$ is not numerically trivial over $X'$, then
the generalized lc threshold of $M_1'$ with respect to $(X', B')$
is finite and bounded from above by Theorem \ref{t-acc-glct}, and this in turn implies
boundedness of $\tilde\mu_1$.
But if $M_1$ is numerically trivial over $X'$, then again $\tilde\mu_1$ is bounded from above but for a
different reason: by the cone theorem $X'$ can be covered by curves $\Gamma'$
such that $-(K_{X'}+B')\cdot \Gamma'\le 2d$ which in turn implies that
$\tilde\mu_1 M_1'\cdot \Gamma'\le 2d$ hence $\tilde\mu_1 M_1\cdot \Gamma\le 2d$ where $\Gamma \subset X$ is the
birational transform of $\Gamma'$. This is possible only if $\tilde\mu_1$ is bounded
from above since $M_1$ is big and Cartier and hence $M_1\cdot \Gamma\ge 1$.

If at the end of the process $\tilde \mu_j=0$ for every $j\ge 2$, then the above arguments show that
$r$ is indeed bounded by some number $p$. But if $\tilde \mu_j>0$ for some $j\ge 2$, then we replace $M$ with
$\sum_{j\neq l} \tilde\mu_jM_j$ and replace $P'$ with $P'+\tilde{\mu}_lM_l'$ 
where $l$ is as above, and rearrange the indexes. We can then assume
that $(X',B'+{M}')$ is generalized lc but not generalized klt.\\

\emph{Step 4.}
The arguments of Step 3 show that, after replacing $X$,  we can assume that there is a
prime divisor $S$ on $X$ exceptional over $X'$ whose generalized log discrepancy with respect to $(X',B'+M')$ is
$0$. Since $(X',0)$ is $\Q$-factorial klt, by Lemma \ref{l-extract-div-2}, there is an extremal contraction
$\phi : X''\to X'$ which extracts  $S''$, the birational transform of $S$. We can write
$$
K_{X''} + B'' + M''=\phi^*(K_{X'} + B' + M')
$$
where $B''$ is the sum of $S''$ and the birational transform of $B'$ and $M''$ is
the pushdown of $M$.

Since $\rho(X') = 1$ and $\phi$ is extremal, $\rho(X'') = 2$.
Moreover,
$$
K_{X''}+B''+M''+{P}''\equiv 0
$$
where ${P}''$ is the pullback of $P'$ on $X'$.
Since $\rho(X') = 1$, $P'$ and so $P''$ is semi-ample, hence
we may assume that $(X'', B'' + P'' + M'')$ is generalized lc with
boundary part $B'' + P''$ and nef part $M$.

Since $S''$ is a component of $\rddown{B''}$, $(X'', B'' -\delta S''+ P'' + M'')$ is generalized lc 
where $\delta>0$ is small, and
$$
-\delta S''\equiv K_{X''}+B''-\delta S''+P''+M''.
$$
So by Lemma \ref{l-LMMP}(1), we can run an
LMMP on $-S''$ which ends up with a Mori fibre space $X'''\to T''' .$
Note that by construction $X'''$ has Picard number one or two: in any case
one of the extremal rays of $X'''$
corresponds to the Fano contraction $X'''\to T'''$ and $S'''$ is positive on this ray.

We may assume that both $g\colon X\bir X''$ and $h\colon X\bir X'''$
are morphisms.\\

\emph{Step 5.}
Consider the case $\dim T'''>0$. Then the Picard number
$\rho(X''') = 2$, hence $X'' \dashrightarrow X'''$ is an isomorphism in codimension one.
Moreover, by restricting to the general fibres
of $X'''\to T'''$ and applying induction we may assume $M_j'''\equiv 0/T'''$ for all but a
bounded number of $j$. For any such $j$, $M_j'''$ is not big, hence $M_j''$ is not big too. Thus $M_j''$ is ample$/X'$
otherwise $M_j''$ would be the pullback of $M_j'$ which is big, a contradiction.
Let $ C':= \phi(S'')$ and let $x'$ be the generic point of $C'$.
Then $M_j'''\equiv 0/T'''$ implies that $M_j$ is not numerically trivial over any neighborhood of $x'$. Now apply
Proposition \ref{p-bnd-comps-local} to $(X',B'+P'+M')$ at $x'$ to bound the number of such $j$. Therefore
$r$ is indeed bounded by some number $p$ depending only on $d,b$.\\

\emph{Step 6.}
We can now assume $\dim T'''=0$. Let $\tilde{X}''\to X'''$ be the last step
of the LMMP which contracts some divisor $\tilde R''$. Let $x'''$ be the generic point
of the image of $\tilde R''$. For each $j$, either $M_j''$ is ample over $X'$
or $\tilde{M}_j''$ is ample over $X'''$ where $\tilde{M}_j''$ is the pushdown of $M_j$ via
$X\bir \tilde X''$ which we can assume to be a morphism. So either $M_j$ is not numerically
trivial over any neighborhood of $x'$ or that it is not numerically
trivial over any neighborhood of $x'''$. Now apply
Proposition \ref{p-bnd-comps-local} to $(X',B'+P'+M')$ and $(X''',B'''+P'''+M''')$
at $x'$ and $x'''$ to bound $r$ by some number $p$ depending only on $d,b$.\\
\end{proof}


\section{ACC for generalized lc thresholds}

In this section, we reduce the ACC for generalized lc thresholds (Theorem \ref{t-acc-glct}) 
to the Global ACC (Theorem \ref{t-global-acc}) in lower dimension by adapting a standard 
argument due to Shokurov. We create an appropriate generalized lc centre of codimension one 
and restrict to it to do induction.

\begin{prop}\label{p-global-acc-to-glct}
Assume that Theorem \ref{t-global-acc} holds in dimension $\le d-1$. Then Theorem \ref{t-acc-glct} holds in dimension $d$.
\end{prop}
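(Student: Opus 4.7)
The plan is to argue by contradiction. Suppose Theorem~\ref{t-acc-glct} fails in dimension $d$: there exist a DCC set $\Lambda\ni 0$ and a sequence of data $(X_i',B_i'+M_i')$, $M_i$, $N_i$, $D_i'$ satisfying the hypotheses of Theorem~\ref{t-acc-glct}, whose generalized lc thresholds $t_i$ form a strictly increasing sequence. Since the statement is local over $X_i'$, I may assume $X_i'\to Z_i$ is the identity. The thresholds that equal $+\infty$ are all equal and cause no trouble, so after passing to a subsequence $\{t_i\}$ is strictly increasing and bounded. At each threshold the pair
$$(X_i',\,B_i'+t_iD_i'+M_i'+t_iN_i')$$
is generalized lc but not generalized klt, so it admits a prime divisor $S_i$ on some birational model of $X_i'$ with generalized log discrepancy $0$.

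The first step is to extract $S_i$. After adding a small general ample perturbation to get the klt hypothesis, Lemma~\ref{l-extract-div-2} provides a $\Q$-factorial crepant model $\pi_i\colon Y_i\to X_i'$ on which $S_i$ appears as a prime divisor whose coefficient in the boundary part $B_{Y_i}+t_iD_{Y_i}$ equals $1$. Exploiting the freedom in the choice of $S_i$ together with the hypothesis that either $D_i'\ne 0$ or some $N_{k,i}$ is not $\sim_{\R} 0/X_i'$ (otherwise the threshold is $+\infty$), I would arrange that either a component of $D_i'$ or a nef Cartier divisor $N_{k,i}$ with $\nu_k>0$ has nontrivial restriction to $S_i$. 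This arrangement, combined with Lemma~\ref{l-adjunction-2}, is what will allow $t_i$ to survive adjunction in a recognizable form.

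Next I would apply generalized adjunction (Definition~\ref{d-q-adjunction}) to the normalization $\bar S_i$, yielding
$$K_{\bar S_i}+B_{\bar S_i}+(M_i+t_iN_i)_{\bar S_i}=\bigl(K_{Y_i}+B_{Y_i}+t_iD_{Y_i}+(M_i+t_iN_i)_{Y_i}\bigr)\big|_{\bar S_i}\equiv 0/C_i,$$
where $C_i:=\pi_i(S_i)$. Restricting to a general closed fibre $F_i$ of $\bar S_i\to C_i$ in the sense of Remark~\ref{r-g-sing}(6), I obtain a generalized lc polarized pair $(F_i,B_{F_i}+(M_i+t_iN_i)_{F_i})$ of dimension at most $d-1$, projective over a point, with $K_{F_i}+B_{F_i}+(M_i+t_iN_i)_{F_i}\equiv 0$. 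Since $\{t_i\}$ is bounded and strictly increasing, it is itself a DCC set; hence by Lemma~\ref{l-ACC-DCC} and Proposition~\ref{p-adj-dcc} the coefficients of $B_{F_i}$ and of the nef part $M_i+t_iN_i$ all lie in a single DCC set $\Lambda'$ depending only on $\Lambda$ and $d$. Dropping summands whose Cartier divisors become numerically trivial on $F_i$ handles hypothesis (iv) of Theorem~\ref{t-global-acc}.

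Finally I would apply Theorem~\ref{t-global-acc} in dimension at most $d-1$ to $(F_i,B_{F_i}+(M_i+t_iN_i)_{F_i})$: its coefficients then lie in a \emph{finite} subset $\Lambda^0\subseteq\Lambda'$. By the explicit formula for adjunction coefficients from the proof of Proposition~\ref{p-adj-dcc} combined with the choice of $S_i$ from the second paragraph, $t_i$ enters some coefficient of $B_{F_i}$ or of the nef part on $F_i$ with a positive rational multiplier, so the strict monotonicity of $\{t_i\}$ forces infinitely many distinct values in $\Lambda^0$, contradicting finiteness. The main obstacle is the second-paragraph step: ensuring that $S_i$ can be chosen so that the signal of $t_i$ actually registers on $F_i$. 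This requires a case split according to whether $D_i'\ne 0$ or only the $N_i$ contribute, together with a careful use of Lemma~\ref{l-adjunction-2} to guarantee that, when moving from $\bar S_i$ to its general fibre $F_i$, the $t_i$-sensitive component remains horizontal over $C_i$.
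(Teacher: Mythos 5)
Your overall strategy (produce a new generalized lc place at the threshold, extract it, apply generalized adjunction, restrict to a general fibre of the lc centre over its image, and contradict Theorem \ref{t-global-acc} in lower dimension) is the same Shokurov-type reduction the paper uses, but the proposal has two genuine gaps precisely at the points where the argument is delicate. First, you never treat the case where the place $S_i$ computing the threshold is a divisor on $X_i'$ itself, i.e.\ $\rddown{B_i'}\neq\rddown{B_i'+t_iD_i'}$. In that case $C_i=\pi_i(S_i)$ has dimension $d-1$, $S_i\to C_i$ is birational, the ``general fibre'' $F_i$ is a point, and the restriction argument is vacuous. The paper disposes of this case separately and directly: there $\mult_{S_i}B_i'+t_i\mult_{S_i}D_i'=1$ with both multiplicities in the DCC set $\Lambda$, so $t_i$ lies in an ACC set, contradicting strict monotonicity. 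Only after reducing to the case where the new non-klt centre has codimension $\ge 2$ (so the fibres of $S_i''\to V_i'$ are positive-dimensional) does the fibre restriction carry content.

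Second, and more seriously, the step you yourself flag as ``the main obstacle'' --- choosing $S_i$ so that the $t_i$-sensitive divisor restricts nontrivially to the general fibre $F_i''$ --- is exactly the heart of the proof, and you do not supply the argument; Lemma \ref{l-adjunction-2} does not by itself produce such an $S_i$. The paper's mechanism is the negativity lemma: writing $f_i^*(D_i'+N_i')=D_i+N_i+P_i$ with $P_i\ge 0$ exceptional over $X_i'$ and nonzero (this nonvanishing uses the codimension-$\ge 2$ reduction above), after the relative LMMP one has $P_i''\neq 0$ exceptional over $X_i'$, so some component $S_i''$ of $P_i''$ carries a covering family of curves contracted over $X_i'$ on which $P_i''$ is negative, hence on which $D_i''+N_i''$ is strictly positive; this forces $(D_i''+N_i'')|_{F_i''}\not\equiv 0$ and hence, via the explicit adjunction coefficient formula, forces the coefficients on $F_i''$ out of any finite set. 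Without this (or an equivalent) selection device, the contradiction in your last paragraph is not established. A minor further point: Lemma \ref{l-extract-div-2} requires $(X_i',C')$ to be klt for some $C'$, which is not part of the hypotheses of Theorem \ref{t-acc-glct}; you should instead extract the place via Lemma \ref{l-extract-divs-1}, or, as the paper does, run the LMMP over $X_i'$ on a log resolution to contract the divisor $E_i$ directly.
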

\begin{proof}
Applying induction we may assume that Theorem \ref{t-acc-glct} holds in dimension $\le d-1$.
If Theorem \ref{t-acc-glct} does not hold in dimension $d$, then there exist a sequence of generalized
lc polarized pairs
$(X_i',B_{i}'+M_{i}')$ of dimension $d$ with data $X_i \overset{f_i}\to X_i'\to Z_i$ and $M_i=\sum \mu_{j,i}M_{j,i}$,
and divisors $D_{i}'$ and $N_i=\sum \nu_{k,i}N_{k,i}$
satisfying the assumptions of
the theorem but such that the generalized lc thresholds $t_i$ of $D_{i}'+N_{i}'$ with
respect to $(X_i',B_{i}'+M_i')$ form a strictly increasing sequence of numbers.
We may assume that $0 < t_i < \infty$ for every $i$. Since the problem is local over $X_i'$, we can
assume $X_i'\to Z_i$ is the identity morphism. Moreover, we can discard any $\mu_{j,i}$ and $\nu_{k,i}$ if they are zero.

By definition,
$$
(X_i',B_{i}'+t_iD_{i}'+M_i'+t_iN_{i}')
$$
is generalized lc with boundary part $B_{i}'+t_iD_{i}'$ and nef part $M_i+t_iN_{i}$ but
$$
(X_i',B_{i}'+a_iD_{i}'+M_i'+a_iN_{i}')
$$
is not generalized lc for any $a_i>t_i$.

If $\rddown{B_i'}\neq \rddown{B_i'+t_iD_i'}$ for infinitely many $i$, then we can easily get a contradiction
as the $t_i$ can be calculated in terms of the coefficients of $B_i'$ and $D_i'$.
Thus we may assume that $\rddown{B_i'}= \rddown{B_i'+t_iD_i'}$ for every $i$. In particular, this means that
there is a generalized lc centre of
$$
(X_i',B_{i}'+t_iD_{i}'+M_i'+t_iN_{i}')
$$
of codimension $\ge 2$ which is not a generalized lc centre of
$(X_i',B_{i}'+M_i')$.

We may assume that the given morphism $f_i\colon X_i\to X_i'$ is a log resolution of
$(X_i',B_{i}'+t_iD_{i}')$. Let $\Delta_i':=B_i'+t_iD_i'$ and let $R_i:=M_i+t_iN_i$. We can write
$$
K_{X_i}+\Delta_i+R_i=f_i^*(K_{X_i'}+\Delta_i'+R_i')+E_i
$$
where $\Delta_i$ is the sum of the birational transform of $\Delta_{i}'$ and the reduced
exceptional divisor of $f_i$, and $E_i\ge 0$ is
exceptional$/X_i'$. Then the pair $(X_i,\Delta_i)$ is
lc but not klt; more precisely there is a component of $\rddown{\Delta_i}$
which is not a component of $E_i$; moreover, there is such a component which is
exceptional$/X_i'$ by the last paragraph.
In addition, the set of the coefficients of all the
$\Delta_i$ union with  $\{\mu_{j,i}, t_i\nu_{k,i}\}$ satisfies the DCC by Lemma \ref{l-ACC-DCC}.

Run an LMMP$/X_i'$ on $K_{X_i}+\Delta_i+R_i$ with scaling of some ample divisor which 
is also an LMMP$/X_i'$ on $E_i$. Since $E_i$ is effective and exceptional$/X_i'$,
the LMMP ends on a model $X_i''$ on which $E_i''=0$ (as in the proof of Lemma \ref{l-extract-divs-1}).
In particular,
$$
K_{X_i''}+\Delta_i''+R_i''\equiv 0/X_i' .
$$

Let $S_i$ be a component of $\rddown{\Delta_i}$ exceptional$/X_i'$ but not a component of $E_i$.
Since the LMMP only contracts components of $E_i$, this $S_i$ is not contracted$/X_i''$.
Define $\Delta_{S_i''}$ by the generalized adjunction
$$
K_{S_i''}+\Delta_{S_i''}+R_{S_i''}=(K_{X_i''}+\Delta_i''+R_i'')|_{S_i''} .
$$
Then the set of the coefficients of all the $\Delta_{S_i''}$ satisfies DCC by Proposition \ref{p-adj-dcc}.
By construction
$$
K_{S_i''}+\Delta_{S_i''}+R_{S_i''}\equiv 0/X_i' .
$$
Let $S_i''\to V_i'$ be the contraction given by the Stein factorization of
 $S_i''\to X_i'$ and let $F_i''$ be a general fibre of $S_i''\to V_i'$.
We can write
$$
K_{F_i''}+\Delta_{F_i''}+R_{F_i''}=(K_{S_i''}+\Delta_{S_i''}+R_{S_i''})|_{F_i''}\equiv 0
$$
as in Remark \ref{r-g-sing} (6): here $\Delta_{F_i''}=\Delta_{S_i''}|_{F_i''}$
and $R_{F_i''}=R_{S_i''}|_{F_i''}$ is the pushdown of $R_i|_{F_i}=(M_i+t_iN_i)|_{F_i}$ where $F_i$
is the fibre of $S_i\to V_i'$ corresponding to $F_i''$.

Suppose that we can choose the $S_i$ such that

\par \vskip 1pc
$(*)~~$ the set of the coefficients of all the $\Delta_{F_i''}$ together with
$\{\mu_{j,i}\mid M_{j,i}|_{F_i}\not\equiv 0\}\cup \{t_i\nu_{k,i}\mid N_{k,i}|_{F_i}\not\equiv 0\}$
 does not satisfy ACC.

\par \vskip 1pc
But then $(*)$ contradicts Theorem \ref{t-global-acc}. So it is enough to find the $S_i$ so that
 $(*)$ holds. We will show that $(*)$ holds if for each $i$ we can find $S_i$ satisfying:

\par \vskip 1pc
$(**)~~$  $(D_i''+N_i'')|_{F_i''}$ is not numerically trivial

\par \vskip 1pc
{\flushleft where} $D_i$ on $X_i$ is the birational transform of $D_i'$ and $D_i''$ is the pushdown of $D_i$; 
here we can assume $g_i \colon X_i \bir X_i''$ is a morphism.
Indeed, let $B_i$ be the sum of the birational transform of $B_i'$ plus the reduced exceptional
divisor of $f_i$, and $B_i''$ its pushdown on $X_i''$. By generalized adjunction we can write
$$
K_{S_i''}+B_{S_i''}+M_{S_i''}=(K_{X_i''}+B_i''+M_i'')|_{S_i''} .
$$
Write $g_i^*(N_i'') =N_i + Q_i$. Then $N_i''|_{F_i''}=N_{F_i''}+Q_{F_i''}$ where
$N_{F_i''}$ is the pushdown of $N_i|_{F_i}$ and $Q_{F_i''}$ is the pushdown of
$Q_i|_{F_i}$. If  $N_i|_{F_i}\not\equiv 0$ for every $i$, then $(*)$ is satisfied.
So we can assume $N_i|_{F_i}\equiv 0$ for every $i$, hence by $(**)$ we have
$$
(D_i''+N_i'')|_{F_i''}\equiv D_{F_i''}+Q_{F_i''}\neq  0
$$
for every $i$ where $D_{F_i''}:=D_i''|_{F_i''}$.
But now $\Delta_{i}''=B_{i}''+t_iD_{i}''$ and 
$\Delta_{S_i''}=B_{S_i''}+t_i(D_{S_i''}+Q_{S_i''})$ where $D_{S_i''}:=D_i''|_{S_i''}$
and $Q_{S_i''}$ is the pushdown of $Q_i|_{S_i}$. Moreover, since $D_{S_i''}+Q_{S_i''}\neq 0$ near $F_i''$,
Proposition \ref{p-adj-dcc} and its proof show that the set of the coefficients of all the
$\Delta_{S_i''}$ near $F_i''$ does not satisfy ACC.
Thus the set of the coefficients of all the $\Delta_{F_i''}$
does not satisfies ACC, hence  $(*)$ holds.

Finally we show that $(**)$ holds.
By the negativity lemma, we can write
$$
f_i^*(D_i'+N_i')=D_i+N_i+P_i
$$
where $P_i\ge 0$ is exceptional$/X_i'$. By the definition of $t_i$
and the assumption  $\rddown{B_i'}= \rddown{B_i'+t_iD_i'}$, there is a component of $P_i$
which is a component of $\rddown{\Delta_i}$ but not a component of $E_i$. In fact any component
of $P_i$ not contracted$/X_i''$, is of this kind.
Since $P_i''\neq 0$ is exceptional$/X_i'$,
by the negativity lemma [\ref{BCHM}, Lemma 3.6.2],
there is a component $S_i''$ of $P_i''$ with a covering family of curves $C$ (contracted over $X_i'$) 
such that $P_i''\cdot C<0$.
So $(D_i''+N_i'')\cdot C>0$ for such curves $C$, hence $(D_i''+N_i'')|_{S_i''}$ is not numerically trivial 
over general points of $V_i'$ 
 which implies that we can choose the $S_i$ so that $(**)$ holds.\\
\end{proof}


\section{Global ACC}\label{s-global-acc}

In this section, we show that Global ACC (Theorem \ref{t-global-acc}) in dimension $< d$
and ACC for generalized lc thresholds (Theorem \ref{t-acc-glct} ) in dimension $d$
together imply Global ACC in dimension $d$.
We first deal with the pairs which are generalized lc but not generalized klt.
For the general case, we will use Proposition \ref{p-bir-nM-n-large} and
do induction on the number of summands in the nef part of the pair,
as illustrated in the introduction.
The starting point of the induction is the important result [\ref{HMX2}, Theorem 1.5] which 
proves the statement when the nef part is zero.

\begin{prop}\label{p-global-acc-glc}
Assume Theorem \ref{t-global-acc} holds in dimension $\le d-1$.
Then Theorem \ref{t-global-acc} holds in dimension $d$ for those $(X',B'+M')$ which are not generalized klt.
\end{prop}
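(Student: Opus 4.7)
The plan is to do induction on dimension by generalized adjunction to a component of $\lfloor B'\rfloor$, apply Theorem \ref{t-global-acc} in dimension $d-1$ inductively, and derive a contradiction from Lemma \ref{l-adjunction-2}. Suppose the conclusion fails: there is a sequence of generalized lc but not generalized klt pairs $(X_i',B_i'+M_i')$ of dimension $d$ satisfying (i)--(vi) of \ref{t-global-acc} such that the union of the coefficients of the $B_i'$ and the $\mu_{j,i}$ is not contained in any finite subset of $\Lambda$. Passing to a subsequence and relabeling components, I would arrange to be in one of the two cases of \ref{l-adjunction-2}(4): either there is a prime component $B_{1,i}'$ of $B_i'$ with $\{b_{1,i}\}$ infinite (forcing $b_{1,i}<1$ after relabeling), or $\{\mu_{1,i}\}$ is infinite.

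Next I would reduce to the situation $\lfloor B_i'\rfloor\neq 0$. Because $(X_i',B_i'+M_i')$ is generalized lc but not generalized klt, some prime divisor has generalized log discrepancy zero; Lemma \ref{l-extract-divs-1} (applied on a sufficiently high resolution of $X_i'$) lets me extract all such divisors exceptional$/X_i'$ and replace $X_i'$ by a $\Q$-factorial model on which $\lfloor B_i'\rfloor\neq 0$. The pullback formula $K_{X_i''}+B_i''+M_i'' = \phi^*(K_{X_i'}+B_i'+M_i')$ preserves both $\equiv 0$ and the DCC condition (the extracted divisors appear with coefficient $1$), and it does not disturb the component $B_{1,i}'$ in case (i) nor the summand $M_{1,i}$ in case (ii).

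Now let $S_i'$ be the normalization of a suitably chosen component of $\lfloor B_i'\rfloor$. Generalized adjunction (\ref{d-q-adjunction}) yields
$$
K_{S_i'}+B_{S_i'}+M_{S_i'} = (K_{X_i'}+B_i'+M_i')|_{S_i'} \equiv 0,
$$
with $B_{S_i'}$ having coefficients in a DCC set depending only on $\Lambda,d$ by Proposition \ref{p-adj-dcc}; the nef part is $M_{S_i}=\sum\mu_{j,i}\,M_{j,i}|_{S_i}$, and after discarding those $j$ with $M_{j,i}|_{S_i}\equiv 0$ the hypotheses of \ref{t-global-acc} in dimension $d-1$ hold. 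By induction, the coefficients of $B_{S_i'}$ together with $\{\mu_{j,i} : M_{j,i}|_{S_i}\not\equiv 0\}$ lie in a common finite set. On the other hand, Lemma \ref{l-adjunction-2} delivers the opposite conclusion precisely when $S_i'$ can be chosen so that $B_{1,i}'|_{S_i'}\not\equiv 0$ in case (i), or $M_{1,i}'|_{S_i'}\not\equiv 0$ in case (ii); the desired contradiction then follows.

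The main obstacle is arranging the non-trivial restriction in the last step. My plan is to run a further LMMP using Lemma \ref{l-LMMP}: in case (i), an LMMP on $K_{X_i'}+(B_i'-\epsilon B_{1,i}')+M_i'\equiv -\epsilon B_{1,i}'$ (feasible since $B_{1,i}'$ is not a component of $\lfloor B_i'\rfloor$ as $b_{1,i}<1$, so the pair $(X_i',(B_i'-\epsilon B_{1,i}')+M_i')$ is still generalized lc); in case (ii), an LMMP on $-\epsilon M_{1,i}'$ in a similar way. This LMMP terminates with a Mori fibre space $X_i''\to T_i'$ on which $B_{1,i}''$ (respectively $M_{1,i}''$) is relatively ample, while at least one component of $\lfloor B_i'\rfloor$ is not contracted (otherwise $K_{X_i''}+B_i''+M_i''$ would be generalized klt). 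Picking $S_i'$ (after identifying with its transform $S_i''$) to be such a component that is horizontal over $T_i'$, or otherwise intersects $B_{1,i}''$ (respectively $M_{1,i}''$) in a positive-dimensional scheme, forces the required non-trivial restriction, and Lemma \ref{l-adjunction-2} yields the contradiction. The delicate point to verify is that the LMMP, the extraction, and the dlt modification together preserve the $\equiv 0$ relation, the DCC coefficients, and the indices singled out in cases (i) and (ii).
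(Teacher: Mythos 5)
Your skeleton is the same as the paper's (argue by contradiction, pass to a $\Q$-factorial dlt model with $\rddown{B_i'}\neq 0$, single out a coefficient $b_{1,i}$ or $\mu_{1,i}$ with non-finite values, run an LMMP on $-\epsilon P_i'$ where $P_i'=B_{1,i}'$ or $M_{1,i}'$, do generalized adjunction to a component $S_i'$ of $\rddown{B_i'}$, apply Proposition \ref{p-adj-dcc}, induction, and Lemma \ref{l-adjunction-2}). However, the step you yourself flag as the main obstacle --- securing $B_{1,i}'|_{S_i'}\not\equiv 0$ (resp.\ $M_{1,i}'|_{S_i'}\not\equiv 0$) --- has a genuine gap, in two places. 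First, your claim that at least one component of $\rddown{B_i'}$ must survive the $-\epsilon P_i'$-LMMP ``otherwise $K_{X_i''}+B_i''+M_i''$ would be generalized klt'' is incorrect: since $K_{X_i'}+B_i'+M_i'\equiv 0$ the LMMP is crepant, so the generalized non-klt property is indeed preserved, but on the end product the generalized lc places may all be realized by divisors exceptional over $X_i''$; it is perfectly possible that every component of $\rddown{B_i'}$ is contracted and $\rddown{B_i''}=0$. Second, even when a component $S_i''$ survives to the Mori fibre space $X_i''\to T_i'$, relative ampleness of $P_i''$ over $T_i'$ only forces $P_i''|_{S_i''}\not\equiv 0$ if $S_i''$ contains curves contracted over $T_i'$; a vertical component that is generically finite over its image in $T_i'$ need not satisfy this, and there is no reason a horizontal component of $\rddown{B_i''}$ exists.

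The paper closes both holes with devices absent from your proposal. If $S_i'$ is contracted by the LMMP, one does \emph{not} go to the Mori fibre space: one stops at the divisorial contraction $X_i'\to X_i''$ that contracts $S_i'$; there $P_i'$ is ample over $X_i''$ and $S_i'$ is covered by curves contracted over $X_i''$, so $P_i'|_{S_i'}\not\equiv 0$ immediately. If instead $S_i'$ survives to the Mori fibre space $X_i''\to T_i'$, one first restricts to the general fibres $F_i'$: since $P_i''|_{F_i'}$ is ample, the non-finite coefficient persists in the restricted data, and Theorem \ref{t-global-acc} in dimension $<d$ (the inductive hypothesis, applied to the possibly generalized klt restrictions, not this proposition) excludes $\dim T_i'>0$; hence $\dim T_i'=0$, $X_i''$ is Fano of Picard number one, $P_i''$ is ample, and the restriction to $S_i''$ is automatically non-trivial. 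You should incorporate both of these reductions; without them the hypothesis of Lemma \ref{l-adjunction-2}(4) is not verified and the contradiction does not materialize.
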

\begin{proof}
\emph{Step 1.}
Extending $\Lambda$ we can assume $1\in \Lambda$.
If the proposition does not hold, then there is a sequence of generalized lc but not klt polarized pairs
$(X_i',B_i'+M_i')$ with data $X_i \overset{f_i}\to X_i'\to Z_i$ and $M_i=\sum \mu_{j,i}M_{j,i}$
satisfying the assumptions of \ref{t-global-acc} but such that
the set of the coefficients of all the $B_i'$ together with the $\mu_{j,i}$ does not satisfy ACC.

We may assume that $f_i : X_i \to X_i'$ is a log resolution of $(X_i',B_i')$.
Let $B_i$ be the sum of the birational transform of $B_i'$ and the reduced exceptional
divisor of $f_i$. We can write
$$
K_{X_i}+B_i+M_i=f_i^*(K_{X_i'}+B_i'+M_i')+E_i
$$
where $E_i\ge 0$ is exceptional$/X_i'$. 
We can run an LMMP$/X_i'$ on $K_{X_i}+B_i+M_i$ with scaling of some ample divisor 
which contracts $E_i$ and terminates with some model (as in the proof of Lemma \ref{l-extract-divs-1}). 
Moreover, by the generalized non-klt assumption, we can choose $f_i$ so that 
there is a prime divisor $S_i$ on $X_i$ which is a component of $\rddown{B_i}$ but not 
a component of $E_i$, hence it is not contracted by the LMMP.
Replacing $X_i'$ with the model given by the LMMP allows us to assume that $(X_i',B_i')$ is $\Q$-factorial dlt 
and that we have a component $S_i'$ of $\rddown{B_i'}$.\\

\emph{Step 2.}
Write $B_i'=\sum b_{k,i}B_{k,i}'$ where $B_{k,i}'$ are the
distinct irreducible components of $B_i'$.
If the set of all the coefficients $b_{k,i}$ is not finite, then we may assume that the
$b_{1,i}$ form a strictly increasing sequence in which case we let $P_i':=B_{1,i}'$. On the other hand, if
the set of all the coefficients $b_{k,i}$ is finite, then the set of all the $\mu_{j,i}$
is not finite hence we could assume that the $\mu_{1,i}$ form a strictly increasing sequence in which
case we let $P_i':=M_{1,i}'$. 
In either case we can run an LMMP on 
$$
K_{X_i'}+B_i'+M_i'-\epsilon P_i'\equiv -\epsilon P_i'
$$
for some small $\epsilon>0$ which ends with a Mori fibre space, by Lemma \ref{l-LMMP}(1).
The  generalized lc (and non-klt) property of $(X_i',B_i'+M_i')$ is preserved by the LMMP 
because $K_{X_i'}+B_i'+M_i' \equiv 0$.\\

\emph{Step 3.}
We first consider the case when $S_i'$ is not contracted by the LMMP in Step 2, for infinitely many $i$. 
Replacing the sequence we can assume this holds for every $i$.
In this case, we replace $X_i'$ with the Mori fibre space constructed, hence we can assume we already have a 
Mori fibre structure $X_i'\to T_i'$ and that $P_i'$ is ample$/T_i'$. 
Let $F_i'$ be a general fibre of $X_i'\to T_i'$. Then we can write
$$
K_{F_i'}+B_{F_i'}+M_{F_i'}=(K_{X_i'}+B_i'+M_i')|_{F_i'}
$$
where $B_{F_i'}=B_i'|_{F_i'}$ and $M_{F_i'}=M_i'|_{F_i'}$. Moreover, since $P_i'|_{F_i'}$ is ample,
the set of the coefficients of all the $B_{F_i'}$ together with the set
$\{\mu_{j,i}\mid M_{j,i}|_{F_i}\not\equiv 0\}$
is not finite where $F_i$ is the fibre of $X_i\to T_i'$ corresponding to $F_i'$.
So applying induction we can assume $\dim T_i'=0$. In particular,  $P_i'|_{S_i'}$ is not numerically trivial.

Now assume the LMMP of Step 2 contracts $S_i'$ at some step, for infinitely many $i$. 
Replacing the sequence we can assume this holds for every $i$. Replacing $X_i'$ 
we can assume $S_i'$ is contracted by the first step of the LMMP, say $X_i'\to X_i''$. 
Then $P_i'$ is ample over $X_i''$, hence $P_i'|_{S_i'}$ is not numerically trivial.

From now on we assume that $P_i'|_{S_i'}$ is not numerically trivial.\\

\emph{Step 4.} 
Apply generalized adjunction to get
$$
K_{S_i'}+B_{S_i'}+M_{S_i'}=(K_{X_i'}+B_i'+M_i')|_{S_i'}\equiv 0 .
$$
By Proposition \ref{p-adj-dcc}, the coefficients of $B_{S_i'}$ belong to a DCC set depending only on 
$d$ and $\Lambda$.
Moreover, $M_{S_i'}$ is the pushdown of $M|_{S_i}=\sum \mu_{j,i}M_{j,i}|_{S_i}$.
Thus by induction the set of the coefficients of all the $B_{S_i'}$ together with the set
$\{\mu_{j,i}\mid M_{j,i}|_{S_i}\not\equiv 0\}$  is finite. But this contradicts Lemma \ref{l-adjunction-2}.\\
\end{proof}

\begin{prop}\label{p-glct-to-global-acc}
Assume that Theorem \ref{t-global-acc} holds in dimension $\le d-1$ and that Theorem
\ref{t-acc-glct} holds in dimension $d$. Then Theorem \ref{t-global-acc} holds in dimension $d$.
\end{prop}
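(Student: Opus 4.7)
I would induct on the number $r$ of summands in the nef part $M=\sum_{j=1}^{r}\mu_j M_j$. The base case $r=0$ is the classical Global ACC \mbox{[\ref{HMX2}, Theorem 1.5]}. For the inductive step, I assume Theorem \ref{t-global-acc} in dimension $d$ for nef parts with fewer than $r$ summands and, by hypothesis, in dimension $\le d-1$. Proposition \ref{p-global-acc-glc} already disposes of the non-klt case, so I may restrict to generalized klt $(X_i',B_i'+M_i')$. Suppose, for contradiction, there is a sequence $(X_i',B_i'+M_i')$, with $M_i=\sum_{j=1}^{r}\mu_{j,i}M_{j,i}$, satisfying all the hypotheses of \ref{t-global-acc} but violating its conclusion; after passing to a subsequence some coefficient (of $B_i'$ or some $\mu_{j,i}$) is strictly monotone in $i$.

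\textbf{Reduction to Fano of Picard number one.} Let $P_i'$ be the component of $B_i'$ or the divisor $M_{j,i}'$ whose coefficient moves strictly. Since $K_{X_i'}+B_i'+M_i'\equiv 0$, running an LMMP on $-\epsilon P_i'$ with scaling of an ample divisor (Lemma \ref{l-LMMP}(1)) terminates in a Mori fibre space $X_i'\to T_i'$ on which $P_i'$ is relatively ample. Restricting to a general fibre and invoking Global ACC in dimension $\le d-1$, I reduce to $\dim T_i'=0$, i.e., $X_i'$ is a $\Q$-factorial klt Fano variety of Picard number one with each $M_{j,i}'$ big. Proposition \ref{p-bnd-comps} (which requires Theorem \ref{t-acc-glct} in dimension $d$) then bounds $r$ uniformly in terms of $\Lambda, d$.

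\textbf{Effective birationality and the key divisor $D_i$.} Fix an integer $n>2rd$ divisible by $r$. Applying Proposition \ref{p-bir-nM-n-large} to $(X_i,B_i)$ and $\sum_j M_{j,i}$ on a log resolution yields $m$ depending only on $\Lambda, d, r$ such that $|m(K_{X_i}+B_i+n\sum_j M_{j,i})|$ defines a birational map. From a member of this system I extract an effective $\R$-divisor $D_i \sim_\R K_{X_i}+B_i+n\sum_j M_{j,i}$ whose coefficients lie in a DCC set depending only on $\Lambda, d$. Its pushdown satisfies
\[
D_i' \sim_\R K_{X_i'}+B_i'+n\sum_j M_{j,i}' \equiv \sum_j (n-\mu_{j,i})M_{j,i}'.
\]
Since $\rho(X_i')=1$, the $M_{j,i}'$ are all proportional; by Lemma \ref{l-ordering-divs}, after reindexing, $M_{j,i}'\equiv \lambda_{j,i}M_{1,i}'$ with $\lambda_{j,i}$ decreasing in $i$, giving $D_i'\equiv \rho_i M_{1,i}'$ with $\rho_i$ in an ACC set. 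Set $N_i:=M_i-\mu_{1,i}M_{1,i}$ and choose $s_i\ge 0$ maximal, then $t_i\ge 0$, so that
\[
K_{X_i'}+B_i'+s_iD_i'+N_i'+t_iM_{1,i}' \equiv K_{X_i'}+B_i'+M_i'
\]
and the left-hand pair is generalized lc.

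\textbf{Conclusion and the main obstacle.} If $t_i=0$ for infinitely many $i$, then $s_i$ is (up to rescaling by $\rho_i$) a generalized lc threshold, so by Theorem \ref{t-acc-glct} it lies in an ACC set; replacing $B_i'$ by $B_i'+s_iD_i'$ and $M_i$ by $N_i$ produces a generalized lc pair with only $r-1$ nef summands and DCC-coefficient data still violating ACC, contradicting the inductive hypothesis on $r$. The difficult case is $t_i>0$ for infinitely many $i$: the modified pair is then generalized lc but not generalized klt, and I would pass to a $\Q$-factorial dlt model (via Lemma \ref{l-extract-divs-1}) to isolate a normal prime divisor $S_i'$ appearing as a non-klt centre. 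I then trade $s_i$ for $\tilde s_i\le s_i$ — keeping only $S_i'$ among the components of $\rddown{B_i'+\tilde s_i S_i'}$ contributed by $D_i'$ — compensating with $\tilde t_i\ge t_i$ so the numerical class is unchanged and the pair stays generalized lc. The hard part is to show $\tilde t_i$ lies in a DCC set (by applying Theorem \ref{t-acc-glct} to $M_{1,i}'$) and then to apply generalized adjunction (Definition \ref{d-q-adjunction}, Proposition \ref{p-adj-dcc}, Lemma \ref{l-adjunction-2}) to $S_i'$, producing on each $S_i'$ a generalized lc polarized pair of dimension $d-1$ whose boundary and nef coefficients violate ACC, thereby contradicting Global ACC in dimension $d-1$.
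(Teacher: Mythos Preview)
Your overall architecture matches the paper's: induction on the number $\sigma$ of nef summands with base case [\ref{HMX2}, Theorem 1.5], reduction to generalized klt via Proposition \ref{p-global-acc-glc}, reduction to $\Q$-factorial klt Fano of Picard number one, bounding $\sigma$ via Proposition \ref{p-bnd-comps}, producing the auxiliary divisor $D_i$ via Proposition \ref{p-bir-nM-n-large}, and the dichotomy on whether one can absorb $\mu_{1,i}M_{1,i}'$ entirely into a multiple of $D_i'$.

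There is, however, a genuine gap in your treatment of the case $t_i>0$. Your plan is to extract a non-klt divisor $S_i'$, replace $s_iD_i'$ by $\tilde s_i S_i'$, compensate with $\tilde t_i M_{1,i}'$, and then assert that $\tilde t_i$ lies in a DCC set ``by applying Theorem \ref{t-acc-glct} to $M_{1,i}'$''. Theorem \ref{t-acc-glct} alone does not give this: it bounds thresholds from above (ACC), whereas a priori $\tilde t_i$ depends on the numerical ratio of $S_i'$ to $M_{1,i}'$, which varies uncontrollably with $i$. The paper spends four steps here. First (Step 5) it shows, by a back-and-forth use of Theorem \ref{t-acc-glct}, that the $s_i$ are \emph{decreasing} and the $t_i$ \emph{increasing}. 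Second (Step 6) it absorbs $sD_i'$ (with $s=\lim s_i$) into $B_i'$, reducing to the situation $\lim s_i=0$; only after this renormalization can one control the modified coefficient (the paper's $p_i$ in Step~7): one gets $t_i\le p_i\le \mu_{1,i}$ with both ends converging to $\mu_1$, and then a further application of Theorem \ref{t-acc-glct} to the threshold of $M_{1,i}'$ forces the $p_i$ to be increasing. Third (Steps 7--9), the paper does not restrict to $S_i'$ and invoke dimension $d-1$; instead it feeds the resulting non-klt pair back into Proposition \ref{p-global-acc-glc} in dimension $d$. When the non-klt place $S_i$ is exceptional over $X_i'$ this requires an extremal extraction (Lemma \ref{l-extract-div-2}), then an LMMP on $-S_i''$ terminating in a Mori fibre space $X_i'''\to T_i'''$, and a separate analysis according to whether $\dim T_i'''=0$ or $>0$. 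Your sketch omits the renormalization and this second LMMP, and without them the DCC property of $\tilde t_i$ and the persistence of an ACC-violation after restriction are not established.

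A minor point in your $t_i=0$ case: you write that $s_i$ lies in an ACC set by Theorem \ref{t-acc-glct}. What is actually needed (and what the paper uses) is that $s_i=\mu_{1,i}/\rho_i$ is \emph{increasing}, since $\mu_{1,i}$ is increasing and $\rho_i$ decreasing; this is what puts the coefficients of $s_iD_i'$ in a DCC set and ensures the new data with $\sigma-1$ nef summands still fails ACC.
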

\begin{proof}
\emph{Step 1}.
If the statement is not true, then there is a sequence of generalized lc polarized pairs
$(X_i',B_i'+M_i')$ with data  $X_i \overset{f_i}\to X_i'\to Z_i$ and $M_i=\sum \mu_{j,i}M_{j,i}$ 
satisfying the assumptions of \ref{t-global-acc} but such that
the set of the coefficients of all the $B_i'$ and all the $\mu_{j,i}$ put together
satisfies DCC but not ACC. Write $B_i'=\sum b_{k,i}B_{k,i}'$ where $B_{k,i}'$ are the
distinct irreducible components of $B_i'$.

As in Steps 1 and 2 of the proof of Proposition \ref{p-global-acc-glc}, we can reduce the problem to the
situation in which  $X_i'$ is a $\Q$-factorial klt variety with a Mori fibre space structure. 
Restricting to the general fibres of the fibration and applying induction we can in addition assume $X_i'$ 
is Fano of Picard number one.

For each $i$, let $\sigma(M_i)$ be the number of the $\mu_{j,i}$.
Then, by Propositions \ref{p-toric} and \ref{p-bnd-comps}, we can assume that
the number of the components of $B_i'$ plus $\sigma(M_i)$ is bounded. Thus
we can assume that the number of the components of $B_i'$ and $\sigma(M_i)$ are both independent of $i$.
We just write $\sigma$ instead of $\sigma(M_i)$.

We will do induction on the number $\sigma$.
By [\ref{HMX2}, Theorem 1.5], the proposition holds when $\sigma = 0$, i.e. when $M_i = 0$ for every $i$.
So we can assume $\sigma>0$. We may also assume that $\sigma$ is minimal with respect to all sequences as above, even
if $\Lambda$ is extended to a larger set.

Replacing
the sequence we may assume that the numbers $b_{k,i}$ and $\mu_{j,i}$ form a (not necessarily strict) increasing sequence for each $k$ and each $j$, because they all belong to the DCC set $\Lambda$. By definition, $b_{k,i}\le 1$.
We show that the $\mu_{j,i}$ are also bounded from above, i.e. $\lim_i \mu_{j,i}<+\infty$ for every $j$:
this follows from the same arguments as in Step 3 of the proof of Proposition \ref{p-bnd-comps}
by considering the generalized lc threshold of $M_{j,i}'$ with respect to $(X_i',B_i'+M_i'-\mu_{j,i}M_{j,i}')$
if $M_{j,i}\not\equiv 0/X_i'$ for infinitely many $i$, or by applying boundedness of the
length of extremal rays otherwise.\\

\emph{Step 2}.
By Proposition \ref{p-global-acc-glc}, we may assume that $(X_i',B_i'+M_i')$ is generalized klt for every $i$.
In particular,  $({X_i'},B_i'+(1+\epsilon_i)M_{i}')$
is generalized klt, and $K_{X_i'}+B_i'+(1+\epsilon_i)M_{i}'$ is ample for some small $\epsilon_i> 0$,
noting that the Picard number $\rho(X_i') = 1$.
We may assume that $f_i : X_i \to X_i'$ is a log resolution of $(X_i',B_i')$ and can write
$$
K_{X_i}+B_i+(1+\epsilon_i)M_i=f_i^*(K_{X_i'}+B_{i}'+(1+\epsilon_i)M_{i}')+E_i
$$
where $B_i$ is the sum of the birational transform of $B_{i}'$ and the reduced
exceptional divisor of $f_i$, and $E_i\ge 0$ is exceptional$/X_i'$. So $K_{X_i}+B_i+(1+\epsilon_i)M_i$
is big, and since the $\mu_{j,i}$ are bounded from above,
we deduce that $K_{X_i}+B_i+\sum n M_{j,i}$ is also big for some fixed natural number $n\gg 1$
independent of $i$.

Now by Proposition \ref{p-bir-nM-n-large}, there exists a natural number $m$, independent of $i$,
such that $|m(K_{X_i}+B_i+\sum nM_{j,i})|$ defines a birational map for every $i$. In particular,
$$
H^0(X_i, \rddown{m(K_{X_i}+B_i+\sum nM_{j,i})})\neq 0
$$
so
$$
\rddown{m(K_{X_i}+B_i+\sum nM_{j,i})}\sim m\overline{D}_i
$$
for some
integral divisor $m\overline{D}_i\ge 0$. The coefficients of $m\overline{D}_i$ belong to $\N$,
a DCC set. Now let $D_i$ be the $\R$-divisor so that $mD_i$ is the sum
of $m\overline{D}_i$ and the fractional part $\langle m(K_{X_i}+B_i+\sum nM_{j,i})\rangle$.
Since $K_{X_i}+\sum nM_{j,i}$
is Cartier,  $mD_i=m\overline{D}_i+\langle mB_i \rangle$.
On the other hand, since the coefficients of $B_i$ belong to the DCC set $\Lambda\cap [0,1]$, the coefficients of
$\langle mB_i \rangle$ belong to a
DCC set as well by Lemma \ref{l-ACC-DCC}.
Therefore, the coefficients of $mD_i$ and hence of $D_i$ belong to a DCC set,
depending only on $\Lambda$. By extending $\Lambda$ we can assume that the coefficients of $D_i$
belong to $\Lambda$.

By construction,
$$
0\le {D}_i\sim_\R {K_{X_i}+B_i+\sum nM_{j,i}}
$$
which in turn implies that
$$
0\le {D}_i'\sim_\R {K_{X_i'}+B_i'+\sum nM_{j,i}'}
$$
$$
={K_{X_i'}+B_i'+M_i'}+\sum (n-\mu_{j,i})M_{j,i}'\equiv \sum (n-\mu_{j,i})M_{j,i}' .
$$
Note that we can assume $n-\mu_{j,i}>0$ for every $j,i$.\\

\emph{Step 3}.
By Lemma \ref{l-ordering-divs}, replacing the sequence $X_i'$ and reordering the indexes $j$,
we may assume that $M_{j,i}'\equiv \lambda_{j,i}M_{1,i}'$ so that for each $j$ the numbers $\lambda_{j,i}$
form a decreasing sequence.
By Step 2, we get
$$
D_i'\equiv \sum (n-\mu_{j,i})M_{j,i}'\equiv \sum (n-\mu_{j,i})\lambda_{j,i}M_{1,i}'=:\rho_iM_{1,i}'
$$
where we have defined
$$
\rho_i:=\sum (n-\mu_{j,i})\lambda_{j,i} .
$$
For each $j$, the numbers $n-\mu_{j,i}$ and
$\lambda_{j,i}$ form decreasing sequences hence the $\rho_i$ also form a decreasing sequence by Lemma \ref{l-ACC-DCC}.

Now let $N_i:=\sum_{j\ge 2}\mu_{j,i}M_{j,i}$ and let $u_i$ be the generalized lc threshold of
$D_i'$ with respect to $(X_i',B_i'+N_i')$.
Since $D_i'\equiv \rho_iM_{1,i}'$ we get
$$
K_{X_i'}+B_i'+N_i'+u_iD_i'\equiv K_{X_i'}+B_i'+N_i'+u_i\rho_iM_{1,i}' .
$$

Assume that $u_i\rho_i\ge \mu_{1,i}$ for every $i$. Let $v_i\le u_i$ be the number so that
$$
K_{X_i'}+B_i'+N_i'+v_iD_i'\equiv K_{X_i'}+B_i'+M_i'\equiv 0
$$
that is, $v_i=\frac{\mu_{1,i}}{\rho_i}$. As the $\mu_{1,i}$ form an increasing sequence
and the $\rho_i$ form a decreasing sequence, the $v_i$ form an increasing sequence.
Moreover, if the $\mu_{1,i}$ form a strictly increasing sequence, then
the $v_i$ also form a strictly increasing sequence.
Thus the set of the coefficients of all
the $B_i'+v_iD_i'$ together with the $\{\mu_{j,i} \mid j\ge 2\}$ is
a DCC set but not ACC.
Now $({X_i'},B_i'+v_iD_i'+N_i')$ is generalized lc with boundary part $B_i'+v_iD_i'$
and nef part $N_i$, and
$\sigma(N_i)<\sigma$ which contradicts the minimality assumption on $\sigma$ in Step 1.
Therefore, from now on we may assume that $u_i\rho_i < \mu_{1,i}$ for every $i$.\\

\emph{Step 4}.
Fix $i$. Let $\Sigma_i$ be the set of those elements
$(\alpha,\beta)\in [0,\frac{\mu_{1,i}}{\rho_i}]\times[0,\mu_{1,i}]$ such that
$$
K_{X_i'}+B_i'+N_i'+\alpha D_i'+\beta M_{1,i}'\equiv K_{X_i'}+B_i'+M_i'
$$
which is equivalent to $\alpha\rho_i+\beta=\mu_{1,i}$. Note that $(0,\mu_{1,i})\in \Sigma_i$
hence $\Sigma_i\neq \emptyset$.
Now let
$$
s_i=\sup \{\alpha \mid (\alpha,\beta)\in \Sigma_i ~~\mbox, \,
~~({X_i'},B_i'+\alpha D_i'+N_i'+\beta M_{1,i}') ~~\mbox{is generalized lc}\}
$$
where the pair in the definition has boundary part $B_i'+\alpha D_i'$ and nef part $N_i+\beta M_{1,i}$.
Letting $t_i=\mu_{1,i}-s_i\rho_i$ we get $(s_i,t_i)\in \Sigma_i$.

We show that $s_i$ is actually a maximum hence in particular
$$
({X_i'},B_i'+s_i D_i'+N_i'+t_i M_{1,i}')
$$
{is generalized lc}. If not, then there is a sequence
$(\alpha^l,\beta^l)\in \Sigma_i$ such that the $\alpha^l$ form a strictly increasing sequence
approaching $s_i$ and the $\beta^l$ form a strictly decreasing sequence approaching
$t_i$. Since
$$
({X_i'},B_i'+\alpha^l D_i'+N_i'+t_i M_{1,i}')
$$
is generalized lc, the generalized lc threshold of $D_i'$ with
respect to $({X_i'},B_i'+ N_i'+t_i M_{1,i}')$
is at least $\lim \alpha^l = s_i$ by Theorem \ref{t-acc-glct}.
So
$$
({X_i'},B_i'+s_i D_i'+N_i'+t_i M_{1,i}')
$$
is also generalized lc. Hence $s_i$ is indeed a maximum.
Note that $s_i\le u_i$.\\

\emph{Step 5}.
Since the coefficients of $D_i'$ belong to $\Lambda$ and since $u_i$ is the generalized lc threshold
of $D_i'$ with respect to $(X_i',B_i'+N_i')$, $u_i$ is bounded from above by Theorem \ref{t-acc-glct}. Thus $s_i$
is also bounded from above.
So we may assume the $s_i$ and the $t_i$ each form an increasing or a decreasing sequence
hence $s=\lim s_i$ and $t=\lim t_i$ exist.
Since the $\mu_{1,i}$ form an increasing sequence and the $\rho_i$ form a decreasing sequence,
the $s_i$ or the $t_i$ form an
increasing sequence. We will show that in fact the $t_i$ form an
increasing sequence.

Assume otherwise, that is, assume the $t_i$ form a decreasing sequence. We can assume it is  
strictly decreasing. Then the $s_i$ form a strictly increasing sequence.
Since
$$
({X_i'},B_i'+s_i D_i'+N_i'+t M_{1,i}')
$$
is generalized lc, we may assume that
$$
({X_i'},B_i'+s D_i'+N_i'+tM_{1,i}')
$$
is generalized lc too, by Theorem \ref{t-acc-glct}.
Now we can find $\tilde{s}_i>s_i$ such that
$(\tilde{s}_i, t)\in \Sigma_i$, that is, $\tilde{s}_i\rho_i+t=\mu_{1,i}$. Since the $\mu_{1,i}$ 
form an increasing sequence and the $\rho_i$ form a decreasing sequence, 
the $\tilde{s}_i$ form an increasing sequence. Moreover, since 
$$
t<t_i\le \mu_{1,i}\le \lim \mu_{1,i}~~\mbox{and}~~s(\lim \rho_i)+t=\lim (s_i\rho_i+t_i)=\lim \mu_{1,i}
$$ 
we deduce $\lim \rho_i>0$. Thus as  
$$
\lim (\tilde{s}_i\rho_i+t)=\lim \mu_{1,i},
$$ 
we get $\lim \tilde{s}_i=\lim {s}_i=s$. 
In particular this means $s \ge \tilde{s}_i>s_i$, hence  
$$
({X_i'},B_i'+\tilde{s}_i D_i'+N_i'+tM_{1,i}')
$$
is generalized lc which contradicts the maximality assumption of $s_i$ in Step 4.

So we have proved that the $t_i$ form an increasing sequence. Now by definition $s_i$ is
the generalized lc threshold of $D_i'$ with respect to
$$
({X_i'},B_i'+N_i'+{t}_i M_{1,i}').
$$
So they form a decreasing sequence by Theorem \ref{t-acc-glct}.
\\

\emph{Step 6}.
The purpose of this step is to modify $B_i'$ so that we can assume $s=\lim s_i=0$.
Let $\tilde{t}_i$ be the number so that $s\rho_i+\tilde{t}_i=\mu_{1,i}$. As $s_i\ge s$, 
$\tilde{t}_i \ge t_i \ge 0$, hence $(s,\tilde{t}_i)\in \Sigma_i$.
Since the $\mu_{1,i}$ (resp. $\rho_i$)
form an increasing (resp. decreasing) sequence, the $\tilde{t}_i$ form an increasing sequence.
Moreover, 
$$
\lim \tilde{t}_i=\lim(\mu_{1,i}-s\rho_i)=\lim(\mu_{1,i}-s_i\rho_i)=\lim t_i=t
$$
which implies $\tilde{t}_i \le t$.

We claim that
$$
(*) \hskip 1pc ({X_i'},B_i'+s D_i'+N_i'+\tilde{t}_i M_{1,i}')
$$
is generalized lc.
Indeed, let $c_i$ be the generalized lc threshold of $M_{1,i}'$
with respect to $({X_i'}, B_i'+s D_i'+N_i')$.
Then $c_i \ge t_i$ and by Theorem \ref{t-acc-glct}, we may assume that the $c_i$ form
a decreasing sequence.
Thus
$$
 c_i \ge \lim c_i \ge \lim t_i = t\ge \tilde{t}_i
$$
and the claim follows.

Now we define the boundary $C_i:=B_i+s\tilde{D}_i'$ on $X_i$ where $B_i$, as in Step 2, is the sum of the
birational transform of $B_i'$ and the reduced exceptional divisor of $X_i\to X_i'$,
and $\tilde{D}_i'$ is the birational
transform of $D_i'$. Then $C_i'=B_i'+sD_i'$ and
$$
({X_i'},C_i'+N_i'+\tilde{t}_i M_{1,i}')
$$
is generalized lc by $(*)$, and
$$
K_{X_i'} + C_i' +N_i' + \tilde{t}_i M_{1,i}' \equiv 0 .
$$
Moreover, the set of the coefficients of all the $C_i'$ union the set $\{\mu_{j,i} \mid j\ge 2\}\cup \{\tilde{t}_i\}$
satisfies DCC but not ACC (note that if the $\mu_{1,i}$ form a strictly increasing sequence, then so do the $\tilde{t}_i$).

On the other hand, let $G_i:=D_i+s\tilde{D}_i'$ and let $r_i:=\frac{s_i-s}{1+s}$.
Then
$$
0\le {G}_i\sim_\R {K_{X_i}+C_i+\sum nM_{j,i}}
$$
and $G_i' = (1+s)D_i'$, and
$$
 K_{X_i'}+C_i'+r_iG_i'+N_i'+t_iM_{1,i}'
= K_{X_i'}+B_i'+ s_iD_i'+N_i'+t_iM_{1,i}'
\equiv 0.
$$
The equality also shows
$$
(X_i', C_i'+r_iG_i'+N_i'+t_iM_{1,i}')
$$
is generalized lc and that $r_i$ is the generalized lc threshold of $G_i'$ with respect to
$({X_i'},C_i'+N_i'+{t}_i M_{1,i}')$.
Therefore extending $\Lambda$, replacing $B_i$ with $C_i$, replacing $\mu_{1,i}$ with $\tilde{t}_i$,
replacing $D_i$ with $G_i$,
and replacing $s_i$ with $r_i$ allow us to assume that $s=\lim s_i=0$.
\\

\emph{Step 7}.
After replacing $X_i$ we may assume that there is a prime divisor $S_i$ on $X_i$ whose
generalized log discrepancy with respect to the generalized lc polarized pair
$$
({X_i'}, B_i'+s_iD_i'+N_i'+t_iM_{1,i}')
$$
is $0$: this follows from our choice of $s_i,t_i$.

First assume that $S_i$ is not
contracted over $X_i'$ for every $i$ which means that $S_i'$ is a component of $\rddown{B_i'+s_iD_i'}$.
Let $d_i$ be the coefficient of $S_i'$ in $D_i'$ and let $p_i$ be the real number such that
$$
K_{X_i'}+B_i'+s_id_iS_i'+N_i'+p_iM_{1,i}'\equiv 0 .
$$
Obviously $p_i\le \mu_{1,i}$, and equality holds if and only if $s_i d_i S' \equiv 0$, i.e., $s_i d_i = 0$.
Since $s_id_iS_i'\le s_iD_i'$ and
$$
K_{X_i'}+B_i'+s_i{D}_i'+N_i'+t_iM_{1,i}'\equiv 0
$$
we have $t_i\le p_i$. Then from $\lim s_i=0$ and $\mu_1:=\lim \mu_{1,i}=\lim t_i$
we arrive at $\lim p_i=\mu_{1}$. So we may assume that the $p_i$ form an increasing
sequence approaching $\mu_{1}$.

Let $w_i$ be the generalized lc threshold of $M_{1,i}'$ with respect to
$$
(X_i', B_i'+s_id_iS_i'+N_i').
$$ 
Then $w_i\ge t_i$. Applying Theorem \ref{t-acc-glct}, we
can assume that the $w_i$ form a decreasing sequence. Then
$$
w_i \ge \lim w_i \ge \lim t_i = \mu_1 = \lim p_i \ge p_i
$$
 which implies that
$$
(X_i', B_i'+s_id_iS_i'+N_i'+p_iM_{1,i}')
$$
is generalized lc
with boundary part $\Delta_i' := B_i'+s_id_iS_i'$ and
nef part $R_i:=N_{i}+p_iM_{1,i}$.
The set of the coefficients of all the $\Delta_i'$ union the set 
$\{\mu_{j,i}\mid j\ge 2\}\cup \{p_i\}$ satisfies DCC.
Therefore, by Proposition \ref{p-global-acc-glc}, we may assume that $p_i$ is a constant independent of $i$.

Now
$$
\mu_1 = \lim p_i = p_i \le \mu_{1,i} \le \lim \mu_{1,i} = \mu_1
$$
Thus $p_i = \mu_{1,i}$, hence $s_i d_i = 0$, $\Delta_i' = B_i'$, and $R_i=M_i$. 
In other words, $(X_i',B_i'+M_i')$ is not generalized klt. This contradicts Proposition \ref{p-global-acc-glc}.

So after replacing the sequence we may assume that $S_i$ is exceptional over $X_i'$ for every $i$.\\

\emph{Step 8}.
By Lemma \ref{l-extract-div-2}, there is an extremal contraction $g_i\colon X_i''\to X_i'$ extracting $S_i''$
with $X_i''$ being $\Q$-factorial. We can assume $X_i\bir X_i''$ is a morphism.
We can write
$$
K_{X_i''}+B_{i}''+s_i\tilde{D}_i''+N_{i}''+t_iM_{1,i}''=g_i^*(K_{X_i'}+B_i'+s_iD_i'+N_i'+t_iM_{1,i}')\equiv 0
$$
where $B_{i}''$ is the pushdown of $B_i$, $\tilde{D}_i''$ is the birational transform of $D_i'$,
$M_{1,i}''$ is the pushdown of $M_{1,i}$, and $N_{i}''$ is the pushdown of $N_i$.
Now $S_{i}''$ is a component of $\rddown{B_{i}''}$.
By Lemma \ref{l-LMMP}(1) we can run the $-S_{i}''$-LMMP which terminates on some Mori fibre space
$X_i'''\to T_i'''$. We may assume that $\dim T_i'''=0$ for every $i$, or $\dim T_i'''>0$ for every $i$.
Replacing $X_i$ we may assume $X_i\bir X_i'''$
is a log resolution of $(X_i''',B_i'''+s_i\tilde{D}_i''')$.

Since $(X_i', B_i' + M_i')$ is generalized lc and $K_{X_i'}+B_i'+M_i'\equiv 0$,
we deduce that $K_{X_i}+B_i+M_i$ is pseudo-effective.
Thus $K_{X_i'''}+B_i'''+M_i'''$ is pseudo-effective too.
Moreover, by construction
$$
 K_{X_i'''}+B_i'''+ s_i\tilde{D}_i'''+N_i'''+t_iM_{1,i}'''\equiv 0 .
$$
So there is the largest
number $q_i\in [t_i,\mu_{1,i}]$ such that
$$
K_{X_i'''}+B_i'''+N_i'''+q_iM_{1,i}'''\equiv 0/T_i''' .
$$
From $s=\lim s_i=0$ we get $\lim t_i=\lim \mu_{1,i}=\mu_1$ from which we derive
$\lim q_i=\mu_1$. So we may assume that the $q_i$ form an increasing
sequence approaching $\mu_{1}$.
Let $w_i$ be the generalized lc threshold of $M_{1,i}'''$ with respect to
the generalized lc polarized pair $(X_i''', B_i''' + N_i''')$.
Then $w_i \ge t_i$ as
$$
({X_i'''},B_i'''+ s_i\tilde{D}_i'''+N_i'''+t_iM_{1,i}''')
$$
is generalized lc.
Moreover, by Theorem \ref{t-acc-glct} we can assume the $w_i$ form a decreasing
sequence, hence
$$
q_i \le \mu_{1,i} \le \mu_1 = \lim t_i \le \lim w_i \le w_i .
$$
So the pair $(X_i''', B_i''' + N_i''' + q_iM_{1,i}''')$
is generalized lc. But the pair is not generalized klt because $S_i'''$ is a component of $\rddown{B_i'''}$.\\

\emph{Step 9}.
Assume that $\dim T_i'''=0$ for every $i$. Applying Proposition \ref{p-global-acc-glc},
we can assume that the set of the coefficients of all the $B_i'''$ union the set
$\{\mu_{j,i}|j\ge 2\}\cup \{q_i\}$ is finite. In particular, this means we can assume
$q_i=\mu_{1,i}=\mu_{1}$ for every $i$, and that 
$\mu_{j,i}=\mu_j$ for every $j,i$ where $\mu_j:=\lim_i \mu_{j,i}$.
On the other hand, assume that $\dim T_i'''>0$ for every $i$.
If $M_{1,i}'''\equiv 0/T_i'''$, then $q_i=\mu_{1,i}$.
But if $M_{1,i}'''\not\equiv 0/T_i'''$, then by restricting to the general fibres of
$X_i'''\to T_i'''$ and applying induction,
we deduce that $\{q_i\}$ is finite, hence $q_i=\mu_1$ for $i\gg 1$;
so we can assume $q_i=\mu_{1,i}=\mu_1$.
Moreover, by restricting to the general fibres of $X_i'''\to T_i'''$ and applying induction once more,
we may assume that the set of the horizontal$/T_i'''$ coefficients of all the $B_i'''$ together
with the set $\{\mu_{j,i}\mid M_{j,i}'''\not\equiv 0/T_i'''\}$ is finite.

The last paragraph shows that in either case $\dim T_i'''=0$ or $\dim T_i'''>0$,
we can assume
$$
(**) ~~~~~~K_{X_i'''}+B_i'''+M_i'''\equiv 0/T_i''' .
$$
Let $\overline{B}_i$ be obtained from $B_i$ by replacing the coefficient
$b_{k,i}$ with $b_k:=\lim_i b_{k,i}$.
Let $\overline{M}_i$ be obtained from $M_i$ by replacing
$\mu_{j,i}$ with $\mu_j = \lim_i \mu_{j,i}$.
Then $K_{X_i'}+\overline{B}_i'+\overline{M}_i'$ is ample because $\rho(X_i') = 1$ and because
either $b_{k,i}<b_k$ for some $k$ or $\mu_{j,i}<\mu_j$ for some $j$.
Moreover, by Theorem \ref{t-acc-glct}, we can assume $({X_i'},\overline{B}_i'+\overline{M}_i')$
is generalized lc.
Thus
$$
K_{X_i}+\overline{B}_i+\overline{M}_i\ge f_i^*(K_{X_i'}+\overline{B}_i'+\overline{M}_i')
$$
is big. This in turn implies that
$K_{X_i'''}+\overline{B}_i'''+\overline{M}_i'''$ is big too. On the other hand,
by the last paragraph, we may assume that on the general fibres $F_i'''$ of $X_i'''\to T_i'''$ we have:
$\overline{B}_i'''|_{F_i'''}=B_i'''|_{F_i'''}$ and $\overline{M}_i'''|_{F_i'''}\equiv M_i'''|_{F_i'''}$.
This contradicts
$(**)$.\\
\end{proof}

%
%

\section{Proof of main results}

In this section, we prove our main results stated in the introduction.

\begin{proof}(of Theorem \ref{t-acc-glct} and Theorem \ref{t-global-acc})
By Proposition \ref{p-global-acc-to-glct}, Theorem \ref{t-global-acc} in dimension $<d$ 
implies Theorem \ref{t-acc-glct} in dimension d. On the other hand, by 
Proposition \ref{p-glct-to-global-acc}, Theorem \ref{t-global-acc} in dimension $<d$ 
 and Theorem \ref{t-acc-glct} in dimension $d$
imply Theorem \ref{t-global-acc} in dimension $d$. So both theorems follow inductively 
the case $d=1$ being trivial.\\
\end{proof}

Next we prove a result bounding pseudo-effective thresholds which will be needed 
for the proof of Theorem \ref{t-bir-bnd-M}.

\begin{thm}\label{t-peff-th-B+M}
Let $d$ be a natural number and $\Lambda$ a DCC set of nonnegative real numbers.
Then there is a real number $e\in (0,1)$
depending only on $\Lambda, d$ such that if:

$\bullet$ $(X,B)$ is projective lc of dimension $d$,

$\bullet$ $M=\sum \mu_jM_j$ where $M_j$ are nef Cartier divisors,

$\bullet$ the coefficients of $B$ and the $\mu_j$ are in $\Lambda$, and

$\bullet$ $K_X+B+M$ is a big divisor,\\\\
then $K_X + eB + eM$ is a big divisor.
\end{thm}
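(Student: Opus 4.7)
My plan is to argue by contradiction via Theorem \ref{t-global-acc}. Write $e(X,B,M) := \inf\{e \ge 0 : K_X + eB + eM \text{ is pseudo-effective}\}$. It suffices to bound $e(X, B, M) \le e_0 < 1$ uniformly in the class of triples considered in the theorem: then with $e := (1+e_0)/2$ the identity $K_X + eB + eM = \tfrac12(K_X + e_0 B + e_0 M) + \tfrac12(K_X + B + M)$ displays $K_X + eB + eM$ as pseudo-effective plus big, hence big. So assume instead a sequence $(X_i, B_i, M_i)$ satisfying the hypotheses with $e_i := e(X_i, B_i, M_i)$ strictly increasing to $1$ (each $e_i < 1$ by bigness of $K + B + M$). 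Following the reduction in the proof of Lemma \ref{l-peffth-of-B-in-K+B+nM}, after passing to a log resolution of $(X_i, B_i)$ and decreasing any boundary coefficient equal to $1$ slightly, I may assume each $(X_i, B_i)$ is log smooth klt at the cost of enlarging $\Lambda$ to a larger DCC set; this can only increase $e_i$, so the strict convergence $e_i \nearrow 1$ persists.

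Next, I view $(X_i, e_iB_i + e_iM_i)$ as a generalized klt polarized pair with identity data $X_i \to X_i$. Since $K_{X_i} + (1/e_i)(e_iB_i + e_iM_i) = K_{X_i} + B_i + M_i$ is big, Lemma \ref{l-LMMP}(2) lets me run an LMMP on the pseudo-effective divisor $K_{X_i} + e_iB_i + e_iM_i$, producing a minimal model $X_i''$ on which this divisor is semi-ample, thus defining a contraction $\phi_i : X_i'' \to T_i''$. The crux is to argue that a general fibre $F_i$ of $\phi_i$ is positive-dimensional. Were $\phi_i$ birational, then $K_{X_i''} + e_iB_i'' + e_iM_i''$ would be the pullback of an ample divisor and hence big on $X_i''$; the $D$-negativity of the LMMP on a common resolution $g : W \to X_i$, $h : W \to X_i''$ gives $g^*(K_{X_i} + e_iB_i + e_iM_i) = h^*(K_{X_i''} + e_iB_i'' + e_iM_i'') + E$ with $E \ge 0$ exceptional$/X_i''$, whence $K_{X_i} + e_iB_i + e_iM_i$ is itself big on $X_i$. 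But then, by openness of the big cone in $N^1(X_i)_{\R}$, $K_{X_i} + (e_i - \delta)B_i + (e_i - \delta)M_i$ remains big for small $\delta > 0$, contradicting minimality of $e_i$.

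Restricting to the positive-dimensional $F_i$ via Remark \ref{r-g-sing}(6), $(F_i, e_iB_{F_i} + e_iM_{F_i})$ is a generalized lc polarized pair with $K_{F_i} + e_iB_{F_i} + e_iM_{F_i} \equiv 0$, whose boundary and nef-part coefficients lie in the DCC set $\{e_i\} \cdot \Lambda$ (Lemma \ref{l-ACC-DCC}). Bigness of $K_{X_i''} + B_i'' + M_i''$ restricts to bigness on $F_i$; subtracting the numerically trivial $K_{F_i} + e_iB_{F_i} + e_iM_{F_i}$ shows $(1 - e_i)(B_{F_i} + M_{F_i})$ is big, so $(B + M)|_{F_i} \not\equiv 0$. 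Thus for each $i$ some $\kappa_i \in \Lambda$—either a component coefficient of $B_i$ meeting $F_i$ non-trivially, or some $\mu_{j,i}$ with $M_{j,i}|_{F_i} \not\equiv 0$—contributes the coefficient $e_i \kappa_i$ to the generalized pair on $F_i$. Theorem \ref{t-global-acc} then forces all such $e_i\kappa_i$ to lie in a finite set; after passing to a subsequence, $e_i\kappa_i = c$ is constant, so $\kappa_i = c/e_i$ is a strictly decreasing infinite sequence in $\Lambda$, contradicting the DCC condition on $\Lambda$. The main technical obstacle is paragraph 2, namely setting up the LMMP and verifying the Mori fibre is non-trivial via the openness of the big cone; once this is in hand, the restriction to $F_i$ and the global ACC deliver the contradiction immediately.
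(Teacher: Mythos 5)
Your argument is correct and follows essentially the same route as the paper's proof: reduce bigness to pseudo-effectivity by the convexity/volume trick, take a contradicting sequence of thresholds $e_i\nearrow 1$, pass to log smooth klt models, run the LMMP of Lemma \ref{l-LMMP}(2) to reach a semi-ample contraction, restrict to a general fibre, and contradict Theorem \ref{t-global-acc} via the DCC property of $\Lambda$. Your explicit verification that the fibre is positive-dimensional (via openness of the big cone) is a point the paper leaves implicit, but it is not a different method.
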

\begin{proof}
It suffices to show the assertion: there is an $e \in (0, 1)$ depending only on $\Lambda, d$ such that
$K_X + eB + eM$ is pseudo-effective; because then
$$\begin{aligned}
\vol(K_X + \frac{1}{2}(e+1)(B + M)) & = \vol(\frac{1}{2}(K_X + B + M + K_X + eB + eM)) \\
& \ge \vol(\frac{1}{2}(K_X + B + M)) > 0
\end{aligned}$$
and hence $K_X + e'B + e'M$ is big for $e':= \frac{1}{2}(1+e) \in (0, 1)$.

If there is no $e$ as in the last paragraph, then there is a sequence of pairs $(X_i,B_i)$ and divisors 
$M_i=\sum \mu_{j,i}M_{j,i}$
satisfying the assumptions of the theorem but such that the pseudo-effective thresholds $e_i$ of $B_i+M_i$
form a strictly increasing sequence approaching $1$: by definition $K_{X_i}+e_iB_i+e_iM_i$ is pseudo-effective
but $K_{X_i}+c_iB_i+c_iM_i$ is not pseudo-effective for any $c_i<e_i$.

We can extend $\Lambda$ and replace the $X_i,B_i$ so that we may assume $(X_i,B_i)$ is log smooth klt.
By Lemma \ref{l-LMMP}(2),
we can run an LMMP on $K_{X_i}+e_iB_i+e_iM_i$ which ends with a minimal model $X_i'$
on which $K_{X_i'}+e_iB_i'+e_iM_i'$ is semi-ample defining a contraction $X_i'\to T_i'$.
Since  $K_{X_i'}+B_i'+M_i'$ is big and $K_{X_i'}+e_iB_i'+e_iM_i'\equiv 0/T_i'$,
we deduce that $B_i'+M_i'$ is big over $T_i'$.

Replacing $X_i$ we may assume that $X_i\bir X_i'$ is a log resolution of $(X_i',B_i')$.
Let $F_i'$ be a general fibre of $X_i'\to T_i'$ and $F_i$ the corresponding fibre of
$X_i\to T_i'$. By restricting to $F_i'$ we get
$$
K_{F_i'}+e_iB_{F_i'}+e_iM_{F_i'}:=(K_{X_i'}+e_iB_{i}'+e_iM_{i}')|_{F_i'}\equiv 0 .
$$
This contradicts Theorem \ref{t-global-acc} because $e_iB_{F_i'}+e_iM_{F_i'}$ is big hence nonzero for every $i$,
so the set of the coefficients of all the $e_iB_{F_i'}$ union with the set
$\{e_i\mu_{j,i}\mid M_{j,i}|_{F_i}\not\equiv 0\}$ is not finite.\\
\end{proof}

\begin{proof}(of Theorem \ref{t-bir-bnd-M})
As usual by taking a log resolution we may assume $(X,B)$ is log smooth.
By Theorem \ref{t-peff-th-B+M},
there exist a rational number $e\in (0,1)$ depending only on $\Lambda, d,r$
such that $K_X+eB+eM$ is big, so $K_X+eB+M$ is also big. As in Step 2 of the proof of Proposition \ref{p-bir-nM-n-large},
there is $p\in \N$ depending only on $e,\Lambda, r$ such that $r|p$ and for any nonzero
$\lambda\in\Lambda$ we can find $\gamma\in [e\lambda, \lambda)$ such that $p\gamma$ is an integer.
In particular, we can find a boundary $\Delta$ such that $eB\le \Delta\le B$,
$p\Delta$ is Cartier, $K_X+\Delta+M$ is big, and $(X,\Delta)$ is klt. Replacing $B$ with $\Delta$ we can
then assume $\Lambda=\{\frac{i}{p} \mid 0\le i\le p-1\}$ and that $(X,B)$ is klt.

By Proposition \ref{p-bir-nM-n-large},
there exist $l,n\in \N$ depending only on $\Lambda, d,r$ such that $r|n$ and that
$|l(K_X+B+nM)|$ defines a birational map. By replacing $l$ with $pl$ we can assume $p|l$.
There is a resolution $\phi\colon W \to X$ such that
$$
\phi^*l(K_X+B+nM)\sim H+G
$$
where $H$ is big and base point free and $G\ge 0$. Perhaps after replacing $l$ with $(2d+1)l$, we
can also assume that $H$ is potentially birational [\ref{HMX}, Lemma 2.3.4].

Applying Theorem \ref{t-peff-th-B+M} once more, there exist rational numbers $s,u\in (0,1)$
depending only on $\Lambda, d,r$ such that $K_X+sB+uM$ is big.
Perhaps after replacing $s,u$,
we can choose a sufficiently large natural number $q$ 
so that $qs$ is integral and divisible by $p$, $qu$ is integral and divisible by $r$,
$$
s':=\frac{ qs+l}{q+l+1}<1,  ~~\mbox{and}~~ \,\,\,\, ~~~~ \frac{qu +ln}{q+l+1}=1 .
$$

Let $X'$ be a minimal model of $K_X+sB+uM$, which exists by Lemma \ref{l-LMMP}(2). We can assume that
the induced map $\psi\colon W\bir X'$ is a morphism. Since $X'$ is a minimal model,
$$
\phi^*(K_X+sB+uM)=\psi^*(K_{X'}+sB'+uM')+E
$$
where $E$ is effective.
 Let
$$
 D=\psi^*(K_{X'}+sB'+uM') .
$$
Since $H$ is potentially birational, by Lemma \ref{l-addNef},
$qD+H$ is potentially birational and $|K_W+\lceil qD+H \rceil|$ defines a birational map.
Thus
$$
|K_W+\lceil \phi^*q(K_X+sB+uM) \rceil +\phi^*l(K_X+B+nM)|
$$
also defines a birational map which in turn implies that
$$
|K_X+\lceil q(K_X+sB+uM) \rceil +l(K_X+B+nM)|
$$
defines a birational map. Hence the linear system
$$
|(q+l+1)(K_X+s'B+M)|
$$
defines a birational map. Therefore
$$
|(q+l+1)(K_X+B+M)|
$$
also defines a birational map. 

By construction $r|qu$ and $r|ln$, so $r|(q+l+1=qu+ln)$.
Now put $a:=m(\Lambda,d,r):=q+l+1$.
 Then $aM$ is Cartier, and for any 
$b\in \N$, the linear system $|b\rddown{a(K_X+B+M)}|$ defines a birational map. But since $aM$ is Cartier 
and $B$ is effective, 
$$
b\rddown{a(K_X+B+M)}\le \rddown{ba(K_X+B+M)}
$$
which means $|m(K_X+B+M)|$ also defines a birational map where $m=ba$.\\
\end{proof}

Next we prove a result similar to \ref{t-bir-bnd-M} but we allow a more general nef part $M$.
This result is not used elsewhere in this paper.

\begin{thm}\label{t-bir-bnd-M-general}
Let $d$ be a natural number and $\Lambda$ a DCC set of nonnegative real numbers.
Then there is a natural number $m$ depending only on $\Lambda, d$ such that if:

$\bullet$ $(X,B)$ is projective lc of dimension $d$,

$\bullet$ $M=\sum \mu_jM_j$ where $M_j$ are nef Cartier divisors,

$\bullet$ the coefficients of $B$ and the $\mu_j$ are in $\Lambda$, and

$\bullet$ $K_X+B+M$ is big,\\\\
then the linear system $|\rddown{m(K_X+B)}+\sum \rddown{m\mu_j}M_j|$ defines a birational map.
\end{thm}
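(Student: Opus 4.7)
The plan is to adapt the proof of Theorem \ref{t-bir-bnd-M} to the present setting (rather than invoke it as a black box) by approximating $M$ with a nef $\Q$-Cartier divisor $M^{(r)}$ of bounded Cartier index, and inserting the ``discrepancy'' $E$ at the potential-birationality step. After adjoining $1$ to $\Lambda$, the DCC hypothesis provides a minimum positive element $\mu_{\min} > 0$ of $\Lambda$. Theorem \ref{t-peff-th-B+M} gives $e \in (0,1)$ depending only on $\Lambda, d$ with $K_X + eB + eM$ big. Choose $r \in \N$ depending only on $\Lambda, d$ with $r \ge \lceil 1/((1-e)\mu_{\min})\rceil$, so that $\rddown{r\mu}/r \ge e\mu$ for every positive $\mu \in \Lambda$. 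Setting $M^{(r)} := \sum_j \tfrac{\rddown{r\mu_j}}{r} M_j$, the divisor $rM^{(r)}$ is nef Cartier, $M^{(r)} - eM$ is a nonnegative combination of the nef divisors $M_j$ and hence nef, and $K_X + B + M^{(r)} = (K_X + eB + eM) + (1-e)B + (M^{(r)} - eM)$ is big.

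Running through the proof of Theorem \ref{t-bir-bnd-M} applied to $(X, B)$ with nef part $M^{(r)}$ and parameter $r$, one extracts natural numbers $p, l, n, q$ and a rational $s < 1$, all depending only on $\Lambda, d$, together with the divisibilities $r \mid n$, $r \mid qu$, $p \mid qs$, $p \mid l$, and the key identity $qu + ln = q + l + 1 =: m$; from (the analog of) Step 5 one obtains a common log resolution $\phi\colon W \to X$, a big base-point-free divisor $H$ on $W$, and $D = \psi^*(K_{X'} + sB' + u {M^{(r)}}')$ on $W$ (where $\psi\colon W \to X'$ and $X'$ is a minimal model of $K_X + sB + uM^{(r)}$) such that $qD + H$ is potentially birational. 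Set $E := \sum_j \rddown{(m/r)\langle r\mu_j\rangle}M_j$; a direct computation shows $mM^{(r)} + E = \sum_j \rddown{m\mu_j}M_j$. Since $E$ is a nonnegative combination of nef Cartier divisors and hence nef, Lemma \ref{l-addNef} implies that $qD + H + \phi^*E$ is potentially birational, so $|K_W + \lceil qD + H + \phi^*E\rceil|$ defines a birational map. Pushing down to $X$ and using $qu + ln = m$, this gives that $|mK_X + (qs+l)B + \sum_j \rddown{m\mu_j}M_j|$ defines a birational map.

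Finally, since $p \mid qs+l$ the divisor $(qs+l)B$ has integer coefficients, and $(qs+l)B \le mB$ combined with integrality forces $(qs+l)B \le \rddown{mB}$ coefficient-wise. Thus the effective divisor $\rddown{mB} - (qs+l)B$ yields an inclusion $|mK_X + (qs+l)B + \sum_j \rddown{m\mu_j}M_j| \subseteq |\rddown{m(K_X+B)} + \sum_j \rddown{m\mu_j}M_j|$, so the target linear system defines a birational map. The main obstacle is the passage from $|m(K_X+B+M^{(r)})|$ to the target linear system: a naive coefficient-wise comparison leaves a nef but typically non-effective residual divisor $E$ supported on the $M_j$, which cannot be absorbed at the level of sections via multiplication by a canonical section; I would overcome this by inserting $E$ already at the potential-birationality step, where only nefness of $E$ is required via Lemma \ref{l-addNef}.
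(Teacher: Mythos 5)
Your proposal is correct and rests on the same key mechanism as the paper's proof: approximate the nef part by one of bounded Cartier index (keeping bigness via Theorem \ref{t-peff-th-B+M}), and absorb the integral, nef but typically non-effective remainder $\sum \rddown{m\mu_j}M_j - mM^{(r)}$ at the potential-birationality stage via Lemma \ref{l-addNef} --- precisely the obstacle you correctly isolate. The only difference is organizational: the paper rounds down both $B$ and the $\mu_j$ to a common bounded denominator $p$, applies Theorem \ref{t-bir-bnd-M} to the resulting pair $(X,\Delta+N)$ as a black box, upgrades ``$|l(K_X+\Delta+N)|$ defines a birational map'' to ``$l(K_X+\Delta+N)$ is potentially birational,'' and then adds $\sum\alpha_jM_j$ with $\alpha_j=\rddown{(l+1)\mu_j}-l\nu_j$, whereas you re-enter the proof of Theorem \ref{t-bir-bnd-M} and insert the correction $\phi^*E$ there.
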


\begin{proof}
As usual we may assume $(X,B)$ is log smooth.
By Theorem \ref{t-peff-th-B+M},
there exists a rational number $e\in (0,1)$ depending only on $\Lambda, d$
such that $K_X+eB+eM$ is pseudo-effective. As in the proof of \ref{t-bir-bnd-M},
there is $p\in \N$ depending only on $e,\Lambda$ such that
we can find a boundary $\Delta\le B$ and numbers $\nu_j\in [e\mu_j,\mu_j]$ such that
$p\Delta$ and $pN$ are Cartier divisors and $K_X+\Delta+N$ is big where $N=\sum \nu_jM_j$.

Applying Theorem \ref{t-bir-bnd-M}, there is $l\in \N$ depending only on $p,d$ (hence only
on $\Lambda,d$) such that $|l(K_X+\Delta+N)|$ defines a birational map and $p|l$. Replacing $l$
by a multiple we can in addition assume that $l(K_X+\Delta+N)$ is potentially birational.
Then by Lemma \ref{l-addNef},
$$
l(K_X+\Delta+N)+\sum \alpha_jM_j
$$
is potentially birational for any $0\le \alpha_j\in \Z$, and
$$
|K_X+l(K_X+\Delta+N)+\sum \alpha_jM_j|
$$
defines a birational map. Since $\nu_j\le \mu_j$, we can take $\alpha_j$
so that $l\nu_j+\alpha_j=\rddown{(l+1)\mu_j}$. Therefore
$$
|(l+1)K_X+{l}\Delta+\sum \rddown{(l+1)\mu_j}M_j|
$$
defines a birational map which in turn implies that
$$
|\rddown{(l+1)(K_X+B)}+\sum \rddown{(l+1)\mu_j}M_j|
$$
defines a birational map because $l\Delta\le \rddown{(l+1)B}$. Now put $m=l+1$.\\
\end{proof}

\begin{proof} (of Theorem \ref{ThA})
Replacing $W$ we can assume the Iitaka fibration $I\colon W\bir X$ is a morphism, i.e.
can assume $V=W$ using the notation before Theorem \ref{ThA}. Also we can assume $\kappa(W)\ge 1$
otherwise there is nothing to prove.

Let $b:=b_F$ and $\beta:=\beta_{\widetilde{F}}$. Let
$$
N = N(\beta) = \lcm \{m \in \N \, | \, \varphi(m) \le \beta\}
$$
where $\varphi$ denotes Euler's $\varphi$-function.
Let
$$
A(b, N) := \{\frac{bNu - v}{bNu} \, | \, u, v \in \N, \, v \le bN \}
$$
which is a DCC subset of the interval $[0, 1)$.

By the results of [\ref{FM}](which is summarized in [\ref{VZ}, Lemma 1.2]), replacing $W$ and $X$ by high enough resolutions,
we may assume that $X$ is smooth and that
there exist a boundary $B$ on $X$ (the discriminant part of $I : W \to X$)
and a nef  $\Q$-divisor $M$ (the moduli part of $I : W \to X$) such that

\begin{itemize}
\item $NbM$ is Cartier,
\item
$B$ has simple normal crossing support with coefficients in $A(b, N)$,
\item $K_X + B + M$ is big,
\item we have isomorphisms
$$
H^0(W, mb K_W ) \cong H^0(X, mb(K_X + B + M))
$$
for every $m \in \N$, and
\item
the rational map defined by ${|mbK_W|}$
is birational to the Iitaka fibration $I : W \to X$ if and only if $|mb(K_X + B +  M)|$
gives rise to a birational map.
\end{itemize}

By letting $\Lambda=A(b, N)$ and $r = Nb$, and applying Theorem \ref{t-bir-bnd-M},
there is a constant $m(\Lambda,d,r)$ depending only on $\Lambda, d,r,$ (hence depending only on 
$d,b, \beta$) such that $|m(K_X + B +  M)|$ defines
a birational map for any $m\in \N$ divisible by $m(\Lambda,d,r)$. Now simply let
$m(d,b_F, \beta_{\widetilde{F}})=bm(\Lambda,d,r)$.\\
\end{proof}

%



\vspace{2cm}

\textsc{DPMMS, Centre for Mathematical Sciences} \endgraf
\textsc{University of Cambridge,} \endgraf
\textsc{Wilberforce Road, Cambridge CB3 0WB, UK} \endgraf
\email{cb496@dpmms.cam.ac.uk\\}

\textsc{Department of Mathematics} \endgraf
\textsc{National University of Singapore,} \endgraf
\textsc{10 Lower Kent Ridge Road, Singapore 119076, Singapore} \endgraf
\email{matzdq@nus.edu.sg}


\begin{thebibliography}{99}

\bibitem{}\label{Bi-2}
{C. Birkar, {\emph{On existence of log minimal models,}}  {Compositio Math.} \textbf{145} (2009), 1442-1446. }


\bibitem{}\label{Bi}
C.~Birkar,
\emph{Existence of log canonical flips and a special LMMP},
Publ. \ Math. \ Inst. \ Hautes \'Etudes Sci.
\textbf{115} (2012), 325–-368.

\bibitem{}\label{BCHM}
C.~Birkar, P.~Cascini, C.~Hacon and J.~M$^{\rm c}$Kernan,
\emph{Existence of minimal models for varieties of log general type},
J. \ Amer. \ Math. \ Soc. \textbf{23} (2010), no. 2, 405-–468.

\bibitem{}\label{BH}  {C. Birkar and Z. Hu, {\emph{Log canonical pairs with good
augmented base loci,}} Compositio Math. \textbf{150}, Issue 04, (2014), 579-592.}

\bibitem{}\label{Cerbo}
G. Di Cerbo, \emph{Uniform bounds for the Iitaka fibration,}
Ann. \ Sc. \ Norm. \ Super. \ Pisa Cl. \ Sci. (5) \textbf{13} (2014), no. 4, 1133-1143.

\bibitem{}\label{CC}
J. Chen and M. Chen, \emph{Explicit birational geometry of threefolds of general
type, I,} Ann. Sci. Ec. Norm. Super. \textbf{43} (2010), no. 3, 365-394.

\bibitem{}\label{FM}
O.~Fujino and S.~Mori,
\emph{A canonical bundle formula},
J. \ Differential Geom. \ \textbf{56} (2000), no. 1, 167-–188.

\bibitem{}\label{HM}
C.~Hacon and J.~M$^{\rm c}$Kernan,
\emph{Boundedness of pluricanonical maps of varieties of general type},
Invent. \ Math. \textbf{166} (2006), 1-–25.

\bibitem{}\label{HMX}
C.~D.~Hacon, J.~M$^{\rm c}$Kernan and C.~Xu,
\emph{On the birational automorphisms of varieties of general type},
Ann. \ of Math. \ (2) \textbf{177} (2013), no. 3, 1077-–1111.

\bibitem{}\label{HMX2}
C.~D.~Hacon, J.~M$^{\rm c}$Kernan and C.~Xu,
\emph{ACC for log canonical thresholds}, Ann. of Math. (2) \textbf{180} (2014), no. 2, 523-571.

\bibitem{}\label{HX}
C.~D.~Hacon and C.~Xu,
Boundedness of log Calabi-Yau pairs of Fano type, Math. \ Res. \ Lett. (to appear),
\textbf{arXiv:1410.8187.}

\bibitem{}\label{Ii}
S.~Iitaka,
\emph{Deformations of compact complex surfaces, II},
J. \ Math. \ Soc. \ Japan \textbf{22} (1970) 247-–261.

\bibitem{}\label{Jiang}
X.~Jiang, \emph{On the pluricanonical maps of varieties of intermediate Kodaira dimension},
Math. \ Ann. \textbf{356} (2013), no. 3, 979-–1004.

\bibitem{}\label{Ka86}
Y. Kawamata, On the plurigenera of minimal algebraic 3-folds with $K_X \equiv 0$,
Math. Ann. \textbf{275} (1986), no. 4, 539-546.

\bibitem{}\label{Ka91}
Y.~Kawamata,
\emph{On the length of an extremal rational curve},
Invent. \ Math. \ \textbf{105} (1991), no. 3, 609-–611.

\bibitem{}\label{Ka98}
Y.~Kawamata,
\emph{Subadjunction of log canonical divisors. II},
Amer. \ J. \ Math. \textbf{120} (1998), no. 5, 893-899.

\bibitem{}\label{Kollar+}
J.~Koll\'ar \'et al.,
\emph{Flips and abundance for algebraic threefolds,}
Ast\'erisque No. \textbf{211} (1992).

\bibitem{}\label{KM}
J.~Koll\'ar and S.~Mori,
Birational geometry of algebraic varieties,
Cambridge Tracts in Math. \textbf{134},
Cambridge Univ.\ Press, 1998.

\bibitem{}\label{Pa}
G.~Pacienza,
\emph{On the uniformity of the Iitaka fibration},
Math. \ Res. \ Lett. \textbf{16} (2009), no. 4, 663–-681.

\bibitem{}\label{Shokurov-log-flips}  {V.V. Shokurov, {\emph{$3$-fold log flips}},
With an appendix by Yujiro Kawamata,
Russian  Acad. \ Sci. \ Izv. \ Math.  \textbf{40}  (1993),  no. 1, 95-202.}

\bibitem{}\label{Ta}
S.~Takayama,
\emph{Pluricanonical systems on algebraic varieties of general type},
Invent. \ Math. \textbf{165} (2006), no. 3, 551-–587.

\bibitem{}\label{TX}
G. Todorov, C. Xu;
\emph{On Effective Log Iitaka Fibration for 3-folds and 4-folds,}
Algebra Number Theory \textbf{3} (2009), no. 6, 697-710.

\bibitem{}\label{Tsuji}
H. Tsuji, \emph{Pluricanonical systems of projective varieties of general type I}, Osaka
J. Math. \textbf{43} (2006), no. 4, 967-995.

\bibitem{}\label{VZ}
E.~Viehweg and D.~-Q.~Zhang,
\emph{Effective Iitaka fibrations},
J. \ Algebraic Geom. \ \textbf{18} (2009), no. 4, 711-730.

\end{thebibliography}
\end{document}